\documentclass[aap]{imsart}

%% Packages
\RequirePackage{amsthm,amsmath,amsfonts,amssymb}
\RequirePackage[numbers]{natbib}
\RequirePackage[colorlinks,citecolor=blue,urlcolor=blue]{hyperref}
\RequirePackage{graphicx}

\startlocaldefs
\newtheorem{theorem}{Theorem}
\newtheorem{lemma}{Lemma}
\newtheorem{corollary}{Corollary}
\newtheorem{proposition}{Proposition}
\newtheorem{condition}{Condition}
\usepackage{mathtools}
\mathtoolsset{showonlyrefs}
\renewcommand{\P}{\mathbb{P}} 
\newcommand{\E}{\mathbb{E}} 
\newcommand{\R}{\mathbb{R}} 
\newcommand{\N}{\mathbb{N}} 
\newcommand{\Z}{\mathbb{Z}} 
\newcommand{\trans}{\mathsf{T}}
\newcommand{\mix}{\text{mix}}
\newcommand{\rel}{\text{rel}}
\newcommand{\hit}{\text{hit}}
\newcommand{\ent}{\text{ent}}
\DeclarePairedDelimiter{\ceil}{\lceil}{\rceil}
\DeclarePairedDelimiter{\floor}{\lfloor}{\rfloor}
\renewcommand{\epsilon}{\varepsilon}
\usepackage[shortlabels]{enumitem}
\endlocaldefs

\begin{document}

\begin{frontmatter}
\title{Restart perturbations for reversible Markov chains: trichotomy and pre-cutoff equivalence}
\runtitle{Restart perturbations for reversible Markov chains}

\begin{aug}
\author[A]{\fnms{Daniel} \snm{Vial}\ead[label=e1]{dvial@utexas.edu}} \and
\author[B]{\fnms{Vijay} \snm{Subramanian}\ead[label=e2]{vgsubram@umich.edu}}
\address[A]{Electrical and Computer Engineering,
University of Texas at Austin,
\printead{e1}}
\address[B]{Electrical Engineering and Computer Science, University of Michigan,
\printead{e2}}
\end{aug}

\begin{abstract}
Given a reversible Markov chain $P_n$ on $n$ states, and another chain $\tilde{P}_n$ obtained by perturbing each row of $P_n$ by at most $\alpha_n$ in total variation, we study the total variation distance between the two stationary distributions, $\| \pi_n - \tilde{\pi}_n \|$. We show that for chains with \textit{cutoff}, $\| \pi_n - \tilde{\pi}_n \|$ converges to $0$, $e^{-c}$, and $1$, respectively, if the product of $\alpha_n$ and the mixing time of $P_n$ converges to $0$, $c$, and $\infty$, respectively. This echoes recent results for specific random walks that exhibit cutoff, suggesting that cutoff is the key property underlying such results. Moreover, we show $\| \pi_n - \tilde{\pi}_n \|$ is maximized by \textit{restart perturbations}, for which $\tilde{P}_n$ ``restarts'' $P_n$ at a random state with probability $\alpha_n$ at each step. Finally, we show that \textit{pre-cutoff} is (almost) equivalent to a notion of ``sensitivity to restart perturbations,'' suggesting that chains with sharper convergence to stationarity are inherently less robust.
\end{abstract}

\end{frontmatter}

\section{Introduction} \label{secIntro}

Markov chains are common tools for modeling phenomena such as the movement of asset prices in financial markets or the processing of tasks in data centers. A fundamental concern is how modeling inaccuracies affect the chain's steady-state behavior, i.e.\ how changes to the chain's transition matrix affect its stationary distribution. Mathematically, we formalize this as follows. Let $P_n$ be the transition matrix of a Markov chain with $n$ states and stationary distribution $\pi_n$. Denote by $\tilde{P}_n$ the transition matrix and $\tilde{\pi}_n$ the stationary distribution of another chain, obtained by perturbing each row of $P_n$ by at most $\alpha_n \in (0,1)$ (in total variation distance). Then the main question we study is as follows: how does the perturbation magnitude $\alpha_n$ relate to the error magnitude $\| \pi_n - \tilde{\pi}_n \|$ (where $\| \cdot \|$ denotes total variation) as the number of states $n$ grows?

Before previewing our results, we briefly outline two basic notions that play prominent roles. The first notion is a class of perturbations we call \textit{restart perturbations}, for which $\tilde{P}_n$ is obtained from $P_n$ as follows. From the current state, flip a coin that lands heads with probability $\alpha_n$. If heads, sample the next state from some auxiliary distribution $\sigma_n$, i.e.\ ``restart'' the chain at a random state, distributed as $\sigma_n$. If tails, sample the next state from $P_n$, i.e.\ follow the original chain. In the case where $P_n$ describes the simple random walk on some underlying graph, this perturbation is more commonly known as PageRank \cite{page1999pagerank}, a model for Internet browsing (nodes and edges represent web pages and hyperlinks, respectively; choosing the next state from $P_n$ corresponds to following a hyperlink and ``restarting'' corresponds to typing in a new page's web address). Also, this perturbation yields an example of a \textit{Doeblin chain}, for which so-called ``perfect sampling'' is possible \cite{athreya2000perfect,propp1996exact}.

A second important notion is that of mixing times and cutoff. Roughly, the $\epsilon$-\textit{mixing time} $t_{\mix}^{(n)}(\epsilon)$ is the number of steps the chain with transition matrix $P_n$ must take before its distribution is $\epsilon$-close to $\pi_n$ (see \eqref{eqDefnTmix} for a formal definition). Certain chains exhibit \textit{cutoff}, meaning
\begin{equation} \label{eqDefnCutoffIntro}
\lim_{n \rightarrow \infty}  \frac{t_{\mix}^{(n)}(\epsilon)}{ t_{\mix}^{(n)}(1-\epsilon) } = 1\ \forall\ \epsilon \in (0,1/2) .
\end{equation}
A weaker condition is \textit{pre-cutoff}, which only requires
\begin{equation} \label{eqDefnPrecutoffIntro}
\sup_{\epsilon \in (0,1/2)} \limsup_{n \rightarrow \infty} \frac{ t_{\mix}^{(n)}(\epsilon) }{ t_{\mix}^{(n)}(1-\epsilon)} < \infty .
\end{equation}
Intuitively, a chain with cutoff or pre-cutoff has a particularly ``sharp'' convergence to stationarity: its distribution remains $(1-\epsilon)$-far from $\pi_n$ for many steps, then abruptly (i.e.\ on a much shorter timescale) becomes $\epsilon$-close to $\pi_n$.

In this work, we show that a chain's perturbation sensitivity is intimately related to its mixing time and to how sharply it converges to stationarity. To motivate this intuitively, we contrast two example chains in Figure \ref{fig_intro} (see Section \ref{secExamples} for details). At left, we plot the total variation distance between the $t$-step distribution and $\pi_n$, as a function of $t$ (see \eqref{eqDefnDnT} for a formal definition). At right, we plot the perturbation error $\| \pi_n - \tilde{\pi}_n \|$ for particular restart perturbations with restart probability $\alpha_n \propto 1 /\sqrt{ t_{\mix}^{(n)}(\epsilon) }$, as a function of $n$. We contrast the two chains as follows:
\begin{itemize}
\item Chain 1 has an extremely sharp convergence to stationarity (and in fact exhibits a strong form of cutoff -- see Section \ref{secExamplesAnalysis}). Thus, after $\sqrt{ t_{\mix}^{(n)}(\epsilon) }$ steps -- when the first restart occurs, in expectation -- its distribution remains far from $\pi_n$. Consequently, we can choose a restart distribution $\sigma_n$ to move the chain to a part of the state space with low stationary measure. This is turn drives $\tilde{\pi}_n$ far from $\pi_n$.
\item In contrast, Chain 2 has a very gradual convergence to stationarity (which is in fact ``maximally gradual" in a certain sense -- see Section \ref{secExamplesAnalysis}). Consequently, when the first restart occurs, it has made significant progress toward its stationary distribution $\pi_n$ and cannot wander too far from this distribution, despite the restart.
\end{itemize}

\begin{figure}
\centering
\includegraphics[width=\textwidth]{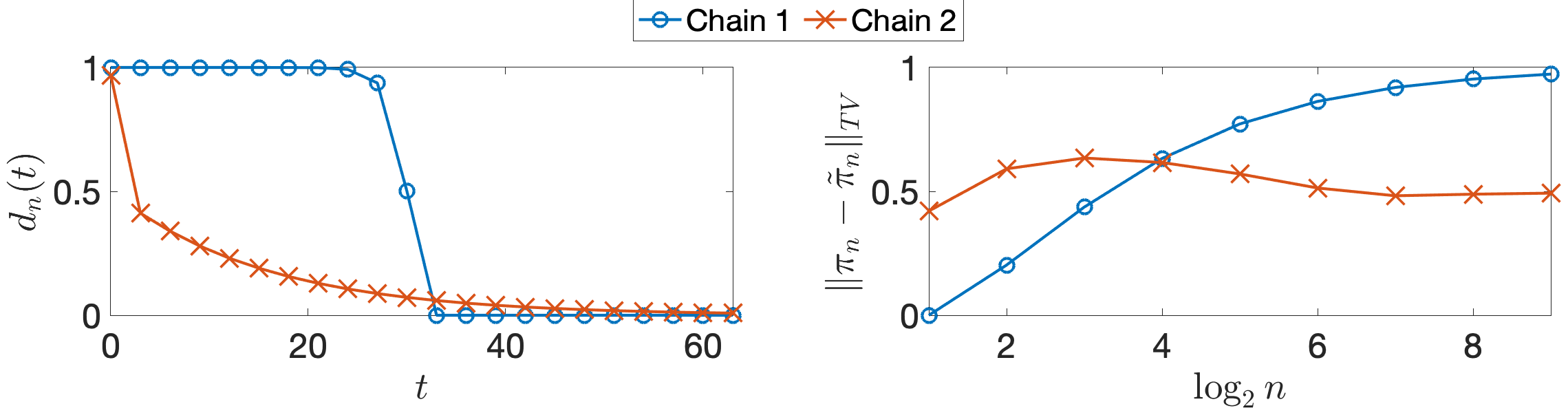}
\caption{Convergence when $n = 32$ (left) and perturbation error (right) for example chains.} \label{fig_intro}
\end{figure}

\subsection{Preview of main results}

The goal of this work is to generalize the insights of Figure \ref{fig_intro} beyond the example chains, and beyond restart perturbations. In particular, we investigate the more general class of $\alpha_n$-bounded perturbations, for which the rows of $P_n$ and $\tilde{P}_n$ each differ by at most $\alpha_n$ in total variation.

Our first main result, Theorem \ref{thmAllRegimes}, says that the relative asymptotics of $\alpha_n$ and $t_{\mix}^{(n)}(\epsilon)$ fully characterize the asymptotics of $\| \pi_n - \tilde{\pi}_n \|$ for a particular class of chains $P_n$. More specifically, we prove that the following trichotomy occurs:
\begin{itemize}
\item If $\lim_{n \rightarrow \infty} \alpha_n t_{\mix}^{(n)}(\epsilon) = 0$, then $\lim_{n \rightarrow \infty} \| \pi_n - \tilde{\pi}_n \| = 0$ for any $\alpha_n$-bounded perturbation, i.e.\ no such perturbation affects the stationary distribution.
\item If $\lim_{n \rightarrow \infty} \alpha_n t_{\mix}^{(n)}(\epsilon) = \infty$, then $\lim_{n \rightarrow \infty} \| \pi_n - \tilde{\pi}_n \| = 1$ for some $\alpha_n$-bounded perturbation, i.e.\ some such perturbation maximally affects the stationary distribution. In particular, we construct a restart perturbation for which $\| \pi_n - \tilde{\pi}_n \| \rightarrow 1$, suggesting that restart perturbations are worst-case among the general class of bounded perturbations.
\item If $\lim_{n \rightarrow \infty} \alpha_n t_{\mix}^{(n)}(\epsilon) = c \in (0,\infty)$, an intermediate behavior occurs: all $\alpha_n$-bounded perturbations satisfy $\limsup_{n \rightarrow \infty} \| \pi_n - \tilde{\pi}_n \| \leq 1 - e^{-c}$, and some $\alpha_n$-bounded perturbation -- again, a restart perturbation -- attains the bound.
\end{itemize}

We note that Theorem \ref{thmAllRegimes} holds assuming the original chain is lazy ($P_n(i,i) \geq 1/2\ \forall\ i$), reversible ($\pi_n(i) P_n(i,j) = \pi_n(j) P_n(j,i)\ \forall\ i,j$), and exhibits cutoff. The laziness and reversibility assumptions are inherited from \cite{basu2015characterization}, which contains an inequality used to prove our lower bounds (see Section \ref{secProofLower}). Hence, we suspect these assumptions may be artifacts of our analysis. In contrast, we believe some notion of cutoff is necessary (as will be discussed shortly). We also note that the proof Theorem \ref{thmAllRegimes} contains several intermediate results that hold under weaker assumptions and may be of independent interest (see Section \ref{secProofTrichotomy}):
\begin{itemize}
\item Most of our upper bounds only require ergodicity. In particular, Corollary \ref{corUpperWeakAss} shows (1) $\| \pi_n - \tilde{\pi}_n \| \rightarrow 0$ whenever $\alpha_n t_{\mix}^{(n)}(\epsilon) \rightarrow 0$ and $\epsilon < 1/2$, and (2) $\limsup \| \pi_n - \tilde{\pi}_n \| \leq 1 - e^{-c} + \epsilon$ whenever $\alpha_n t_{\mix}^{(n)}(\epsilon) \rightarrow c$. We then sharpen these bounds (i.e.\ we remove the $\epsilon < 1/2$ assumption in (1) and the additive $\epsilon$ in (2)) under the cutoff assumption (which is needed for our lower bounds) to obtain the upper bounds of the theorem (see Corollary \ref{corFinalUpper}).
\item For the lower bounds, we first prove a non-asymptotic result in terms of a certain notion of high-probability hitting times, assuming only ergodicity (see Lemma \ref{lemLowerWeaker}). We then invoke the aforementioned result from \cite{basu2015characterization}, which relates these hitting times to the \textit{relaxation time} $t_{\rel}^{(n)}$ (see \eqref{eqDefnRelaxTime}), assuming laziness and reversibility. Together, this yields bounds of the form
\begin{equation}
\| \pi_n - \tilde{\pi}_n \| \gtrapprox 1 - \exp \left( - \alpha_n \left(   t_{\mix}^{(n)}  -  t_{\rel}^{(n)}  \right) \right)
\end{equation}
assuming only laziness and reversibility (see \eqref{eqLowerWithoutCutoff}). Finally, we use the known fact $t_{\rel}^{(n)} = o ( t_{\mix}^{(n)} )$ under pre-cutoff to obtain the lower bounds of the theorem (see Corollaries \ref{corLowerWithDeltas}-\ref{corLowerWithoutDeltas}).
\end{itemize}

Our second main result concerns pre-cutoff. As alluded to above, we believe some notion of cutoff is necessary for lower bounds like those above. Indeed, in Theorem \ref{thmEquivalence} we show that for lazy and reversible chains, pre-cutoff \eqref{eqDefnPrecutoffIntro} implies a certain perturbation condition, and
\begin{equation} \label{eqPrecutFailNiceIntro}
\sup_{\epsilon \in (0,1/2)} \liminf_{n \rightarrow \infty} \frac{ t_{\mix}^{(n)}(\epsilon) }{ t_{\mix}^{(n)}(1-\epsilon) } = \infty 
\end{equation}
(which is slightly stronger than the negation of \eqref{eqDefnPrecutoffIntro}) implies the negation of the perturbation condition. Roughly speaking, this condition is as follows: for certain sequences $\{ \alpha_{n,\epsilon} \}_{n \in \N, \epsilon \in (0,1/2)} \subset (0,1)$ and all $\epsilon \in (0,1/2)$, there exists a sequence of restart perturbations with restart probabilities $\{ \alpha_{n,\epsilon} \}_{n \in \N}$ and stationary distributions $\{ \tilde{\pi}_{n,\epsilon} \}_{n \in \N}$ such that $\| \pi_n - \tilde{\pi}_{n,\epsilon} \| \rightarrow 1$. Hence, Theorem \ref{thmEquivalence} says that chains with pre-cutoff are sensitive to perturbation, in the sense that certain perturbations maximally change the stationary distribution, and the converse (almost) holds. The only gap in our logic involves the case
\begin{equation} \label{eqBizarreChainsIntro}
\sup_{\epsilon \in (0,1/2)} \liminf_{n \rightarrow \infty} \frac{ t_{\mix}^{(n)}(\epsilon) }{ t_{\mix}^{(n)}(1-\epsilon) } < \infty = \sup_{\epsilon \in (0,1/2)} \limsup_{n \rightarrow \infty} \frac{ t_{\mix}^{(n)}(\epsilon) }{ t_{\mix}^{(n)}(1-\epsilon) } .
\end{equation}
We believe this case primarily involves pathological sequences of chains -- for example, when $P_n$ corresponds to Chain 1 from Figure \ref{fig_intro} when $n$ is even and Chain 2 when $n$ is odd.

\subsection{Context of main results}

Having previewed our results, we provide a brief overview of related work; see Section \ref{secRelated} for further details.

Theorem \ref{thmAllRegimes} says that a threshold phenomena for the original chain -- cutoff -- translates into a different threshold phenomena for the perturbed chain -- the trichotomy shown above. Another point of interest is that similar trichotomies have been established in several recent papers. For example, \cite{caputo2019mixing} shows that the restart perturbation adopts the cutoff behavior of the original chain if $\alpha_n t_{\mix}^{(n)}(\epsilon) \rightarrow 0$, has a distinct convergence to stationarity if $\alpha_n t_{\mix}^{(n)}(\epsilon) \rightarrow \infty$, and exhibits an intermediate behavior if $\alpha_n t_{\mix}^{(n)}(\epsilon) \rightarrow (0,\infty)$, assuming the original chain is the simple random walk on a particular random graph. Similar results were obtained in \cite{avena2018random,avena2020linking} for random walks on dynamic random graphs; here $\alpha_n$ denotes the ``rate of change'' of edges. Finally, \cite{vial2019structural} studies the matrix $\{ \tilde{\pi}_{n,i} \}_{i \in [n]}$, where $\tilde{\pi}_{n,i}$ corresponds to restarting at state $i$; the authors prove this matrix has effective dimension $O(1)$ if $\alpha_n t_{\mix}^{(n)}(\epsilon) \rightarrow 0$, conjecture the dimension is $\Omega(n/\log n)$ if $\alpha_n t_{\mix}^{(n)}(\epsilon) \rightarrow \infty$, and prove the dimension is $O(n^{f(c)})$ for some $f(c) \in (0,1)$ if $\alpha_n t_{\mix}^{(n)}(\epsilon) \rightarrow c \in (0,\infty)$, when the original chain is generated as in \cite{caputo2019mixing}.

Ultimately, this work, \cite{caputo2019mixing}, \cite{avena2018random}, \cite{avena2020linking}, and \cite{vial2019structural} all study different questions, but the similarities speak to a much deeper phenomena: some aspect of the original chain is unaffected when $\alpha_n t_{\mix}^{(n)}(\epsilon) \rightarrow 0$, this aspect is significantly altered when $\alpha_n t_{\mix}^{(n)}(\epsilon) \rightarrow \infty$, and an intermediate behavior occurs when $\alpha_n t_{\mix}^{(n)}(\epsilon) \rightarrow (0,\infty)$. However, in contrast to \cite{caputo2019mixing}, \cite{avena2018random}, \cite{avena2020linking}, and \cite{vial2019structural}, we work directly with the stationary distribution, which is arguably the most fundamental such aspect one would hope to understand. Additionally, unlike these works, we do not assume a random graph model for the original chain; in this sense, our results are more general, while demonstrating a similar idea.

The main utility of Theorem \ref{thmEquivalence} is that, while different notions of cutoff have been proven for many different chains (such as Markovian models of card shuffling and random walks on certain graphs), there is little general theory. In fact, only recently was an abstract condition equivalent to cutoff determined in \cite{basu2015characterization} (this being a certain notion of ``hitting time cutoff''). Additionally, while Theorem \ref{thmEquivalence} relies on an inequality from \cite{basu2015characterization}, it is much more than a corollary of this result, and both our notion of cutoff (pre-cutoff instead of cutoff) and our equivalent notion (perturbation sensitivity instead of hitting times) differ. Finally, we note that in addition to \cite{basu2015characterization}, the very recent paper \cite{salez2021cutoff} proves cutoff for all chains with a certain notion of nonnegative curvature using an ``entropic concentration principle.'' The authors speculate that this principle underlies cutoff in general, but this has not been formally established.

In short, this paper contributes to two lines of work. First, we add to the collection of ``trichotomy'' results, but we study the stationary distribution directly and do not assume a random graph model. Second, we add to the general theory of cutoff in a similar vein to \cite{basu2015characterization}, but for a different notion of cutoff and a different equivalent notion.

\subsection{Organization}

The remainder of the paper is organized as follows. We begin in Section \ref{secPrelim} with definitions. Section \ref{secResults} contains the two theorems described above. We discuss the example chains from Figure \ref{fig_intro} in Section \ref{secExamples}. Section \ref{secRelated} discusses related work in more detail. Sections \ref{secProofTrichotomy}, \ref{secProofEquivalence}, and \ref{secProofExampleMain} contain proofs, with some calculations deferred to the appendix.

\section{Preliminaries} \label{secPrelim}

We begin with some notation. Let $\Z_+$ denote the set of nonnegative integers, and let $\{ X_n(t) \}_{t \in \Z_+}$ be a time-homogeneous, irreducible, and aperiodic Markov chain with state space $[n] = \{1,\ldots,n\}$. We denote by $P_n$ the transition matrix of this chain, i.e.\ the matrix with $(i,j)$-th entry
\begin{equation}
P_n(i,j) = \P ( X_n(t+1) = j | X_n(t) = i )\ \forall\ i, j \in [n], t \in \Z_+ .
\end{equation}
It is a standard result that this chain has a unique stationary distribution $\pi_n$, i.e.\ a unique vector $\pi_n$ satisfying $\pi_n = \pi_n P_n$ and $\sum_{i=1}^n \pi_n(i) = 1$. Here and moving forward, we treat all vectors as row vectors. For $i \in [n]$, we let $e_i$ denote the length-$n$ vector with 1 in the $i$-th coordinate and zeros elsewhere. Also, we let $\Delta_{n-1}$ denote the set of distributions on $[n]$, so that (for example) $\pi_n \in \Delta_{n-1}$. Finally, we let $\mathcal{E}_n$ denote the set of transition matrices for time-homogeneous, irreducible, and aperiodic Markov chains with state space $[n]$, so that (for example) $P_n \in \mathcal{E}_n$.

Some of our results will only apply to a strict subset of $\mathcal{E}_n$. In particular, at times we require the chain to be lazy, meaning $P_n(i,i) \geq 1/2\ \forall\ i \in [n]$, and reversible, meaning $\pi_n(i) P_n(i,j) = \pi_n(j) P_n(j,i)\ \forall\ i,j \in [n]$. We note that any chain can be made lazy without changing its stationary distribution; namely, by considering $(P_n+I_n)/2$ instead of $P_n$, where $I_n$ is the $n \times n$ identity matrix. In this sense, reversibility is the most restrictive of our assumptions. However, this is a fairly common restriction in the mixing times literature, since it guarantees the eigenvalues of $P_n$ are real and allows one to use certain linear algebraic techniques (see e.g.\ Chapter 12 of \cite{levin2009markov}).

We next define the distance between the $t$-step distribution and stationarity by
\begin{equation} \label{eqDefnDnT}
d_n(t) = \max_{i \in [n]} \| e_i P_n^t - \pi_n \|\ \forall\ t \in \Z_+ ,
\end{equation}
where $\| \cdot \|$ denotes total variation distance, $\| \mu - \nu \| = \max_{A \subset [n]} | \mu(A) - \nu(A) |$ for $\mu, \nu \in \Delta_{n-1}$. For $\epsilon \in (0,1)$, we then define the $\epsilon$-mixing time as
\begin{equation} \label{eqDefnTmix}
t_{\mix}^{(n)}(\epsilon) = \min \{ t \in \Z_+ : d_n(t) \leq \epsilon \} .
\end{equation}
As is convention in the literature, we set $t_{\mix}^{(n)} = t_{\mix}^{(n)}(1/4)$. We also note the following monotocity property, which follows immediately from the definition:
\begin{equation} \label{eqMixMonotone}
\forall\ \epsilon, \delta \in (0,1) \textrm{ s.t.\ } \epsilon \leq \delta,\ t_{\mix}^{(n)}(\epsilon) \geq t_{\mix}^{(n)} (\delta) .
\end{equation}
We next recall the two notions of cutoff from Section \ref{secIntro}. First, a sequence $\{ P_n \}_{n \in \N}$ with $P_n \in \mathcal{E}_n\ \forall\ n \in \N$ is said to exhibit cutoff if
\begin{equation} \label{eqDefnCutoff}
\lim_{n \rightarrow \infty}  \frac{t_{\mix}^{(n)}(\epsilon)}{ t_{\mix}^{(n)}(1-\epsilon) } = 1\ \forall\ \epsilon \in (0,1/2) .
\end{equation} 
A basic result (see e.g.\ Section 18.1 of \cite{levin2009markov}) says that cutoff occurs if and only if
\begin{equation} \label{eqStepFunction}
\lim_{n \rightarrow \infty} d_n ( s t_{\mix}^{(n)} )  = \begin{cases} 1 , & s < 1 \\ 0 , & s > 1 \end{cases} .
\end{equation}
Thus, cutoff means the graph of $d_n(t)$ approaches a step function as $n \rightarrow \infty$, when the $t$-axis is normalized by $t_{\mix}^{(n)}$. Put differently, the chain is quite far from stationarity at time e.g.\ $0.99 t_{\mix}^{(n)}$, then suddenly reaches stationarity at time e.g.\ $1.01 t_{\mix}^{(n)}$. The weaker notion of pre-cutoff only requires
\begin{equation} \label{eqDefnPreCutoff}
\sup_{\epsilon \in (0,1/2)} \limsup_{n \rightarrow \infty} \frac{ t_{\mix}^{(n)}(\epsilon) }{ t_{\mix}^{(n)}(1-\epsilon)}  < \infty .
\end{equation}

For the perturbation analysis described in the introduction, it will be convenient to introduce some additional notation. First, given $P_n \in \mathcal{E}_n$ and $\alpha \in (0,1)$, we define 
\begin{equation} \label{eqDefnPnBall}
B(P_n,\alpha) = \left\{ \tilde{P}_n \in \mathcal{E}_n : \max_{i \in [n]} \| e_i P_n - e_i \tilde{P}_n \| \leq \alpha  \right\} .
\end{equation}
In words, $B(P_n,\alpha)$ is the set of transition matrices for time-homogeneous, irreducible, and aperiodic chains whose rows differ from the rows of $P_n$ by at most $\alpha$ (in the total variation distance). We will denote the unique stationary distribution of $\tilde{P}_n \in B(P_n,\alpha)$ by $\tilde{\pi}_n$. A particular subset of $B(P_n,\alpha)$ is the class of restart perturbations discussed in the introduction. Such perturbations have the form
\begin{equation}
(1-\alpha) P_n + \alpha 1_n^{\trans} \sigma_n \in  B(P_n,\alpha) 
\end{equation}
for some $\alpha \in (0,1)$ and $\sigma_n \in \Delta_{n-1}$, where $1_n$ is the length-$n$ row vector of ones. The corresponding chain $\{ \tilde{X}_n(t) \}_{t \in \Z_+}$ has the following dynamics: given $\tilde{X}_n(t)$, flip a coin that lands heads with probability $\alpha$; if heads, let $\tilde{X}_n(t+1) \sim \sigma_n$ (i.e.\ restart at a state sampled from $\sigma_n$); if tails, let $\tilde{X}_n(t+1) \sim e_{\tilde{X}_n(t)} P_n$ (i.e.\ follow the original chain). Note that such perturbations only depend on the restart probability $\alpha$ and the restart distribution $\sigma_n$. Thus, we use
\begin{equation}
P_{\alpha,\sigma_n} = (1-\alpha) P_n + \alpha 1_n^{\trans} \sigma_n 
\end{equation}
to define restart perturbations. We denote the corresponding stationary distribution by $\pi_{\alpha,\sigma_n}$. Moving forward, $\alpha$ will typically depend on $n$, in which case we write $P_{\alpha_n,\sigma_n}$ and $\pi_{\alpha_n,\sigma_n}$.

Finally, we note the following (standard) notation for $\{ a_n \}_{n \in \N} , \{ b_n \}_{n \in \N} \subset [0,\infty)$ will be used: we write $a_n = O(b_n)$, $a_n = \Omega(b_n)$, $a_n = \Theta(b_n)$, and $a_n = o(b_n)$, respectively, if $\limsup_{n \rightarrow \infty} a_n / b_n < \infty$, $\liminf_{n \rightarrow \infty} a_n / b_n > 0$, $a_n = O(b_n)$ and $a_n = \Omega(b_n)$, and $\lim_{n \rightarrow \infty} a_n / b_n = 0$, respectively.

\section{Main results} \label{secResults}

\subsection{Trichotomy}

We can now formally state the trichotomy described in Section \ref{secIntro}. 

\begin{theorem}[Trichotomy] \label{thmAllRegimes}
Let $P_n \in \mathcal{E}_n, \alpha_n \in (0,1)\ \forall\ n \in \N$, and let $\epsilon \in (0,1)$ be independent of $n$. Assume $\{ P_n \}_{n \in \N}$ exhibits cutoff, each $P_n$ is lazy and reversible, and $\lim_{n \rightarrow \infty} \alpha_n t_{\mix}^{(n)}(\epsilon) = c \in [0,\infty]$. Then the following hold:
\begin{itemize} 
\item If $c = 0$, then $\forall\ \{ \tilde{P}_n \}_{n \in \N}$ s.t.\ $\tilde{P}_n \in B(P_n,\alpha_n)\ \forall\ n \in \N$,
\begin{equation}
\lim_{n \rightarrow \infty} \| \pi_n - \tilde{\pi}_n \| = 0  .
\end{equation}
\item If $c \in (0,\infty)$, then $\forall\ \{ \tilde{P}_n \}_{n \in \N}$ s.t.\ $\tilde{P}_n \in B(P_n,\alpha_n)\ \forall\ n \in \N$,
\begin{equation} 
\limsup_{n \rightarrow \infty} \| \pi_n - \tilde{\pi}_n \| \leq 1 - e^{-c}  .
\end{equation}
Furthermore, the bound is tight, i.e.\ $\exists\ \{ \tilde{P}_n \}_{n \in \N}$ s.t.\ $\tilde{P}_n \in B(P_n,\alpha_n)\ \forall\ n \in \N$ and
\begin{equation} 
\liminf_{n \rightarrow \infty} \| \pi_n - \tilde{\pi}_n \| \geq  1 - e^{-c}  .
\end{equation}
In particular, $\tilde{P}_n$ is a restart perturbation, i.e.\ $\tilde{P}_n = P_{\alpha_n,\sigma_n}$ for some $\sigma_n \in \Delta_{n-1}$.
\item If $c = \infty$, then $\exists\ \{ \tilde{P}_n \}_{n \in \N}$ s.t.\ $\tilde{P}_n \in B(P_n,\alpha_n)\ \forall\ n \in \N$ and
\begin{equation} \label{eqCorInfRegime}
\lim_{n \rightarrow \infty} \| \pi_n - \tilde{\pi}_n \| =  1  .
\end{equation}
In particular, $\tilde{P}_n$ is a restart perturbation, i.e.\ $\tilde{P}_n = P_{\alpha_n,\sigma_n}$ for some $\sigma_n \in \Delta_{n-1}$.
\end{itemize}
\end{theorem}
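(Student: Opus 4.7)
The proof splits into matching upper and lower bounds: an upper bound valid for every $\tilde{P}_n \in B(P_n, \alpha_n)$, and a lower bound witnessed by an explicit restart perturbation. For the upper bound, I would start from the stationarity identity $\tilde\pi_n = \tilde\pi_n \tilde P_n^t$ and apply the triangle inequality with $\tilde\pi_n P_n^t$ as the intermediate point:
\[
\|\pi_n - \tilde\pi_n\| \leq \|\tilde\pi_n \tilde P_n^t - \tilde\pi_n P_n^t\| + \|\tilde\pi_n P_n^t - \pi_n\| .
\]
The second summand is at most $d_n(t)$ since $\tilde\pi_n P_n^t - \pi_n$ is a convex combination of the vectors $e_i P_n^t - \pi_n$. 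The first is at most $1 - (1-\alpha_n)^t$ via a step-by-step coupling: at each step $P_n$ and $\tilde P_n$ can be coupled to disagree with probability at most $\alpha_n$, so the total disagreement probability over $t$ steps is at most $1-(1-\alpha_n)^t$. Setting $t = t_{\mix}^{(n)}(\epsilon)$ and sending $n \to \infty$ under $\alpha_n t_{\mix}^{(n)}(\epsilon) \to c$ yields $\limsup \|\pi_n - \tilde\pi_n\| \leq (1-e^{-c}) + \epsilon$. Under cutoff, $t_{\mix}^{(n)}(\epsilon)/t_{\mix}^{(n)}(\delta) \to 1$ for all $\epsilon,\delta \in (0,1/2)$, so the hypothesis on $\alpha_n t_{\mix}^{(n)}(\epsilon)$ is insensitive to $\epsilon$; sending $\epsilon \to 0$ after $n \to \infty$ removes the additive slack and gives $\limsup \leq 1-e^{-c}$ (collapsing to $0$ when $c=0$).

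For the lower bound in the regimes $c \in (0,\infty]$, I would construct a restart perturbation $\tilde P_n = P_{\alpha_n, e_{x_n}}$ for a carefully chosen state $x_n \in [n]$. Solving the stationarity equation $\pi = (1-\alpha_n)\pi P_n + \alpha_n e_{x_n}$ yields the explicit representation
\[
\pi_{\alpha_n, e_{x_n}}(A) = \alpha_n \sum_{t \geq 0} (1-\alpha_n)^t P_n^t(x_n, A) = \P_{x_n}(X_n(T_n) \in A) ,
\]
where $T_n$ is geometric with $\P(T_n \geq t) = (1-\alpha_n)^t$, independent of the chain. Since $\{X_n(T_n) \in A\} \subseteq \{\tau_A \leq T_n\}$ for the hitting time $\tau_A$,
\[
\|\pi_n - \pi_{\alpha_n, e_{x_n}}\| \geq \pi_n(A_n) - \P_{x_n}(\tau_{A_n} \leq T_n)
\]
for any target set $A_n$. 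The plan is to choose $(x_n, A_n)$ with (a) $\pi_n(A_n) \to 1$ and (b) $\tau_{A_n}$ from $x_n$ concentrated above a threshold $t_n^*$ satisfying $\alpha_n t_n^* \to c$. A union bound after conditioning on $T_n$ gives $\P_{x_n}(\tau_{A_n} \leq T_n) \leq \P(T_n \geq t_n^*) + \P_{x_n}(\tau_{A_n} < t_n^*)$, whose first summand tends to $e^{-c}$ and whose second can be made arbitrarily small, producing the desired $\liminf \|\pi_n - \tilde\pi_n\| \geq 1 - e^{-c}$ (and $\to 1$ in the $c=\infty$ case).

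The main obstacle is securing the hitting-time concentration (b) with additive slack $o(t_{\mix}^{(n)})$. This is where I would invoke the Basu--Hermon--Peres inequality from \cite{basu2015characterization}, which, for lazy reversible chains, produces for each $\delta > 0$ a pair $(x_n, A_n)$ with $\pi_n(A_n) \geq 1 - \delta$ and $\P_{x_n}(\tau_{A_n} < t_{\mix}^{(n)} - C_\delta t_{\rel}^{(n)}) \leq \delta$. Under cutoff, the standard fact $t_{\rel}^{(n)} = o(t_{\mix}^{(n)})$ collapses the threshold to $(1 - o(1)) t_{\mix}^{(n)}$, so $\alpha_n t_n^* \to c$ as required; this is precisely the point at which both of the structural assumptions (laziness and reversibility) and cutoff are consumed. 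A final limit maneuver---take $n \to \infty$ first, then $\delta \to 0$---matches the upper and lower bounds exactly and covers all three regimes simultaneously.
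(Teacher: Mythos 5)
Your route is essentially the paper's: the same coupling decomposition for the upper bound (Lemma \ref{lemGenericUpper}), sharpened by transferring the hypothesis $\alpha_n t_{\mix}^{(n)}(\epsilon)\to c$ across levels $\epsilon$ via cutoff (Corollary \ref{corFinalUpper}); and the same lower bound via a restart perturbation at a state from which a large-stationary-measure set is not hit before time $\approx t_{\mix}^{(n)}$, powered by the hitting-time inequality of \cite{basu2015characterization} together with $t_{\rel}^{(n)} = o(t_{\mix}^{(n)})$ under (pre-)cutoff (Lemmas \ref{lemLowerWeaker}--\ref{lemHitIneq}, Corollary \ref{corLowerWithDeltas}). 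Your geometric-stopping-time representation of $\pi_{\alpha_n,e_{x_n}}$ is the same computation the paper performs by splitting the series at $t_n$.

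The one genuine hole is the closing ``take $n\to\infty$ first, then $\delta\to 0$'' for the lower bound. Unlike the upper bound, which holds for every perturbation so the $\delta\to0$ limit costs nothing, the lower-bound witness depends on $\delta$: for each fixed $\delta$ you produce a \emph{different} sequence $\{\tilde P_n^{(\delta)}\}_{n}$ with $\liminf_n\|\pi_n-\tilde\pi_n^{(\delta)}\|\ge 1-e^{-c}-3\delta$, whereas the theorem asserts a \emph{single} sequence achieving $1-e^{-c}$. Swapping these quantifiers requires a diagonal extraction --- choose $\delta_k\downarrow 0$, thresholds $N_k$ past which the $k$-th witness is within $2^{-k}$ of its limit inferior, and splice the witnesses together, treating separately the degenerate case where the $N_k$ stay bounded --- which is precisely what the paper's Corollary \ref{corLowerWithoutDeltas} is for. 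Routine, but it must be carried out. Two smaller details the paper also handles and you elide: for $c\in(0,\infty)$ one needs $t_{\mix}^{(n)}(\delta)\to\infty$ in order that $(1-\alpha_n)^{t_{\mix}^{(n)}(\delta)}\to e^{-c}$, and this is where laziness enters the upper bound; and the theorem permits $\epsilon\in(0,1)$, so the cutoff transfer must also cover $\epsilon\ge 1/2$ by comparing $t_{\mix}^{(n)}(\epsilon)$ with $t_{\mix}^{(n)}(1-\epsilon)$.
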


Note that, given any $c' \in [0,1]$, the theorem guarantees the existence of a sequence of (restart) perturbations $\{ \tilde{P}_n \}_{n \in \N}$ s.t.\ $\lim_{n \rightarrow \infty} \| \pi_n - \tilde{\pi}_n \|  = c'$. This holds without explicit knowledge of $P_n$, and thus we may lack expressions for (or even estimates of) $\pi_n$ and $\tilde{\pi}_n$; nevertheless, we obtain a precise asymptotic comparison of these distributions.

The proof of Theorem \ref{thmAllRegimes} can be found in Section \ref{secProofTrichotomy}, which contains several intermediate results that require weaker assumptions than the theorem. We mention the key ideas here:
\begin{itemize}
\item For the upper bounds (see Section \ref{secProofUpper}), we first use the well-known relationship between total variation and coupling to obtain non-asymptotic bounds for $\| \pi_n - \tilde{\pi}_n \|$ in terms of $\alpha_n$ and the distance from stationarity function \eqref{eqDefnDnT} (see Lemma \ref{lemGenericUpper}). These bounds follow by constructing a (possibly suboptimal) coupling of the $t$-step distributions $(\tilde{\pi}_n P_n^t , \tilde{\pi}_n \tilde{P}_n^t)$ from a sequence of $t$ optimal couplings (one per step). From these bounds, we derive two asymptotic bounds under weak assumptions: $\| \pi_n - \tilde{\pi}_n \| \rightarrow 0$ assuming ergodicity and $\epsilon < 1/2$ when $c = 0$, and $\| \pi_n - \tilde{\pi}_n \| \rightarrow 1 - e^{-c} + \epsilon$ assuming ergodicity and $t_{\mix}^{(n)}(\epsilon) \rightarrow \infty$ when $c \in (0,\infty)$ (see Corollary \ref{corUpperWeakAss}). Finally, we use the cutoff assumption (which is needed for the lower bounds) to sharpen these results and obtain the bounds of the theorem.
\item For the lower bounds (see Section \ref{secProofLower}), we show that a certain notion of high-probability hitting times, denoted $t_{\hit}^{(n)}$ and defined in \eqref{eqDefnThit}, can be used to construct a restart perturbation for which $\| \pi_n - \tilde{\pi}_n \|$ is lower bounded in terms of the product $\alpha_n t_{\hit}^{(n)}$. This is possible owing to the relatively simple structure of restart perturbations, for which $\tilde{\pi}_n$ can be expressed only in terms of $\alpha_n$ and $P_n$ (see \eqref{eqInfWeakSum}). Moreover, the aforementioned \cite{basu2015characterization} shows that $t_{\hit}^{(n)}$ can be bounded in terms of $t_{\mix}^{(n)}$, and these bounds are nontrivial assuming at least pre-cutoff holds. Taken together, we can obtain nontrivial lower bounds for $\| \pi_n - \tilde{\pi}_n \|$ in terms of $\alpha_n t_{\mix}^{(n)}$ (as in the theorem), assuming $c = \infty$ and pre-cutoff holds, or $c \in (0,\infty)$ and cutoff holds (see Corollary \ref{corLowerWithDeltas}). Finally, similar to the upper bound analysis, we sharpen the former bound under the cutoff assumption to obtain the results in the theorem.
\end{itemize}

\subsection{Pre-cutoff ``equivalence"} \label{secResultsEquiv}

We next turn to Theorem \ref{thmEquivalence}. As discussed in the introduction, the theorem provides a near-equivalence between pre-cutoff and a certain perturbation condition. More specifically, we will show that pre-cutoff implies a certain perturbation condition, and that this condition fails whenever
\begin{equation} \label{eqPrecutFailNice}
\sup_{\epsilon \in (0,1/2)} \liminf_{n \rightarrow \infty} \frac{t_{\mix}^{(n)}(\epsilon) }{ t_{\mix}^{(n)}(1-\epsilon) } = \infty .
\end{equation}
The caveat of Theorem \ref{thmEquivalence} being a near-equivalence arises because \eqref{eqPrecutFailNice} is stronger than the negation of pre-cutoff; one can construct sequences of chains for which pre-cutoff and \eqref{eqPrecutFailNice} both fail. For instance, in Section \ref{secExamples} we provide two examples with drastically different cutoff behaviors; if we construct a new sequence that oscillates between these two, we obtain
\begin{equation}
\liminf_{n \rightarrow \infty} \frac{t_{\mix}^{(n)}(\epsilon) }{ t_{\mix}^{(n)}(1-\epsilon) }= 1 , \quad \limsup_{n \rightarrow \infty} \frac{t_{\mix}^{(n)}(\epsilon) }{ t_{\mix}^{(n)}(1-\epsilon) } = \infty , \quad \forall\ \epsilon \in (0,1/2) .
\end{equation}
However, this oscillating sequence is pathological; the literature typically considers sequences $\{P_n\}_{n \in \N}$ for which $P_n$ is defined in the same manner for each $n$ (i.e., $P_n$ defines a card shuffling model for $n$ cards for each $n$, a random walk on a graph of $n$ nodes for each $n$, etc.). Thus, the ``near-equivalence'' caveat is a small one.

Before presenting Theorem \ref{thmEquivalence}, we must define the perturbation condition. However, this condition is somewhat opaque, so for expository purposes, we first explain the difficulty in deriving it. We begin with the most obvious candidate, the condition from Theorem \ref{thmAllRegimes}:
\begin{equation} \label{eqEqCondFirstAttempt}
\forall\ \{ \alpha_n \}_{n \in \N} \textrm{ s.t.\ } \lim_{n \rightarrow \infty} \alpha_n t_{\mix}^{(n)}(\epsilon) = \infty,\ \exists\ \{ \tilde{P}_n \}_{n \in \N} \textrm{ s.t.\ } \lim_{n \rightarrow \infty} \| \pi_n - \tilde{\pi}_n \| = 1 . 
\end{equation}
Indeed, in the proof of Theorem \ref{thmAllRegimes} (see Section \ref{secProofUpper}), we show that pre-cutoff implies \eqref{eqEqCondFirstAttempt} (assuming laziness and reversibility). The difficulty arises in showing that \eqref{eqEqCondFirstAttempt} fails whenever \eqref{eqPrecutFailNice} holds. The most obvious approach is as follows. When \eqref{eqPrecutFailNice} holds, it is \textit{possible} that for some fixed $\epsilon \in (0,1/2)$, 
\begin{equation} \label{eqEqCondFirstAttemptPossible}
\lim_{n \rightarrow \infty}  \frac{t_{\mix}^{(n)}(\epsilon) }{ t_{\mix}^{(n)}(1-\epsilon) } = \infty ,
\end{equation}
which suggests setting $\alpha_n = c / t_{\mix}^{(n)}(1-\epsilon)$ for some $c$ independent of $n$, since then
\begin{equation}\label{eqEqCondFirstAttemptBeforeInvoke}
 \lim_{n \rightarrow \infty} \alpha_n t_{\mix}^{(n)}(\epsilon) =c \lim_{n \rightarrow \infty} \frac{t_{\mix}^{(n)}(\epsilon) }{ t_{\mix}^{(n)}(1-\epsilon) }  = \infty .
\end{equation}
Our task would then be reduced to upper bounding $\| \pi_n - \tilde{\pi}_n \|$ (perhaps via techniques used for upper bounds above). Unfortunately, \eqref{eqEqCondFirstAttemptPossible} is not guaranteed to hold.

While this first attempt fails, it illustrates the dissonance at hand: \eqref{eqEqCondFirstAttempt} considers sequences $\{ \alpha_n \}_{n \in \N}$ depending only on $n$; the analogous sequence $\{ 1 / t_{\mix}^{(n)}(1-\epsilon) \}_{n \in \N, \epsilon \in (0,1/2)}$ in \eqref{eqPrecutFailNice} depends on both $n$ and $\epsilon$. Hence, as a second attempt, we could modify \eqref{eqEqCondFirstAttempt} to involve a sequence $\{ \alpha_{n,\epsilon} \}_{n \in \N, \epsilon \in (0,1/2)}$  depending on both $n$ and $\epsilon$. However, if \eqref{eqEqCondFirstAttempt} is modified in this manner, it is no longer implied by pre-cutoff via the proof of Theorem \ref{thmAllRegimes}.

It turns out this issue can be resolved by placing appropriate restrictions on the set of sequences of restart probabilities appearing in the perturbation condition. In particular, we will say that the sequence of restart probabilities $\{ \alpha_{n,\epsilon} \}_{n \in \N, \epsilon \in (0,1/2)} \subset (0,1)$ \textit{coincides with} the mixing times $\{ t_{\mix}^{(n)}(\epsilon) \}_{n \in \N, \epsilon \in (0,1)}$ if
\begin{gather} \label{eqDefnCoincides}
\sup_{\epsilon \in (0,1/2)} \liminf_{n \rightarrow \infty} \alpha_{n,\epsilon} t_{\mix}^{(n)} (\epsilon) = \infty , \\ \frac{ \alpha_{n,\epsilon} }{ \alpha_{n,\delta} } \in \left[ \frac{ t_{\mix}^{(n)}(1-\delta) }{ t_{\mix}^{(n)}(1-\epsilon) } , 1 \right]\ \forall\ \epsilon, \delta \in (0,1/2) \textrm{ s.t.\ } \epsilon \geq \delta, \forall\ n \in \N ,
\end{gather}
and we will restrict to sequences that coincide with the mixing times (such sequences exist under the assumption of laziness; see Section \ref{secProofEquivalence}) More specifically, we define the following perturbation condition for use in our second main result.
\begin{condition} \label{condSens}
For any sequence of restart probabilities $\{ \alpha_{n,\epsilon} \}_{n \in \N, \epsilon \in (0,1/2)} \subset (0,1)$ that coincides with the mixing times $\{ t_{\mix}^{(n)}(\epsilon) \}_{n \in \N, \epsilon \in (0,1)}$, there exists $\{ \sigma_{n,\epsilon} \}_{n \in \N,\epsilon \in (0,1/2)}$ such that
\begin{equation}
\sigma_{n,\epsilon} \in \Delta_{n-1}\ \forall\ n \in \N, \epsilon \in (0,1/2), \quad  \lim_{n \rightarrow \infty} \| \pi_n - \pi_{\alpha_{n,\epsilon},\sigma_{n,\epsilon}} \| = 1\ \forall\ \epsilon \in (0,1/2) .
\end{equation}
\end{condition}
We note the definition of ``coincides with'' yields the following property: when pre-cutoff holds and $\{ \alpha_{n,\epsilon} \}_{n \in \N, \epsilon \in (0,1/2)}$ coincides with the mixing times, $\alpha_{n,\epsilon} t_{\mix}^{(n)} (\epsilon) \rightarrow \infty\ \forall\ \epsilon \in (0,1/2)$. In words, not only is the supremum in \eqref{eqDefnCoincides} infinite, the $\liminf$ in \eqref{eqDefnCoincides} is infinite, for every $\epsilon \in (0,1/2)$. This allows us to prove (using ideas from Theorem \ref{thmAllRegimes}) that Condition \ref{condSens} is implied by pre-cutoff, while also proving that Condition \ref{condSens} fails (via the approach discussed above) whenever \eqref{eqPrecutFailNice} holds. With Condition \ref{condSens} in place, we present Theorem \ref{thmEquivalence}.
\begin{theorem}[Pre-cutoff ``equivalence"] \label{thmEquivalence}
Let $\{ P_n \}_{n \in \N}$ be such that $P_n \in \mathcal{E}_n$ is lazy and reversible for each $n \in \N$. If $\{ P_n \}_{n \in \N}$ exhibits pre-cutoff, then Condition \ref{condSens} holds; if $\{ P_n \}_{n \in \N}$ satisfies \eqref{eqPrecutFailNice}, then Condition \ref{condSens} fails.
\end{theorem}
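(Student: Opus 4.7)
My plan is to prove the two directions separately. The forward direction (pre-cutoff $\Rightarrow$ Condition \ref{condSens}) leans on Corollary \ref{corLowerWithDeltas} after a ``monotonicity-transfer'' step, while the reverse direction (\eqref{eqPrecutFailNice} $\Rightarrow$ failure) uses the explicit restart-stationarity formula $\pi_{\alpha,\sigma} = \alpha \sigma [I - (1-\alpha)P_n]^{-1}$.

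For the forward direction, let $\{\alpha_{n,\epsilon}\}$ coincide with the mixing times. The key intermediate claim is that pre-cutoff upgrades the coincides supremum into $\alpha_{n,\epsilon} t_{\mix}^{(n)}(\epsilon) \to \infty$ for \emph{every} fixed $\epsilon \in (0,1/2)$. I would prove this via the chain, valid for $\delta \leq \epsilon$:
\begin{equation*}
\alpha_{n,\epsilon} t_{\mix}^{(n)}(\epsilon) \;\geq\; \alpha_{n,\epsilon} t_{\mix}^{(n)}(1-\epsilon) \;\geq\; \alpha_{n,\delta} t_{\mix}^{(n)}(1-\delta) \;\geq\; C^{-1}\, \alpha_{n,\delta} t_{\mix}^{(n)}(\delta) ,
\end{equation*}
where the first step uses \eqref{eqMixMonotone} ($\epsilon < 1-\epsilon$), the second step rewrites the lower-bound ratio in the coincides definition as the fact that $\alpha_{n,\epsilon} t_{\mix}^{(n)}(1-\epsilon)$ is non-decreasing in $\epsilon$, and the third uses pre-cutoff, $t_{\mix}^{(n)}(\delta) \leq C\, t_{\mix}^{(n)}(1-\delta)$ eventually. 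Taking $\liminf_n$ and then $\sup$ over $\delta \leq \epsilon$ on the right-hand side, the supremum coincides with the full coincides supremum (since $\liminf_n \alpha_{n,\delta} t_{\mix}^{(n)}(\delta)$ is non-increasing in $\delta$, its supremum over $(0,1/2)$ is attained as $\delta \to 0$), which equals $\infty$. Thus $\liminf_n \alpha_{n,\epsilon} t_{\mix}^{(n)}(\epsilon) = \infty$ for every $\epsilon$, and since the sequence is nonnegative, $\alpha_{n,\epsilon} t_{\mix}^{(n)}(\epsilon) \to \infty$. Corollary \ref{corLowerWithDeltas}, which needs only pre-cutoff plus divergence in the $c=\infty$ regime, then supplies restart distributions $\sigma_{n,\epsilon}$ with $\|\pi_n - \pi_{\alpha_{n,\epsilon},\sigma_{n,\epsilon}}\| \to 1$, verifying Condition \ref{condSens}.

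For the reverse direction, I would take the natural candidate $\alpha_{n,\epsilon} = c / t_{\mix}^{(n)}(1-\epsilon)$ for a small constant $c > 0$. This saturates the lower ratio bound in the coincides constraint, and $\alpha_{n,\epsilon} t_{\mix}^{(n)}(\epsilon) = c\, t_{\mix}^{(n)}(\epsilon)/t_{\mix}^{(n)}(1-\epsilon)$, so the coincides supremum becomes exactly $c$ times the quantity in \eqref{eqPrecutFailNice}, hence $\infty$. Now fix any $\epsilon^* \in (0,1/2)$. From $\pi_n[I-(1-\alpha)P_n] = \alpha \pi_n$ and $\pi_{\alpha,\sigma}[I-(1-\alpha)P_n] = \alpha \sigma$, I obtain $\pi_n - \pi_{\alpha,\sigma} = \alpha (\pi_n - \sigma) \sum_{k \geq 0} (1-\alpha)^k P_n^k$, giving the uniform bound
\begin{equation*}
\|\pi_n - \pi_{\alpha_{n,\epsilon^*},\sigma}\| \;\leq\; \alpha_{n,\epsilon^*} \sum_{k=0}^\infty (1-\alpha_{n,\epsilon^*})^k d_n(k) \;=\; \E[d_n(K)]
\end{equation*}
for every $\sigma \in \Delta_{n-1}$, where $K$ is geometric with parameter $\alpha_{n,\epsilon^*}$. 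Using $d_n(k) \leq 1-\epsilon^*$ for $k \geq t_{\mix}^{(n)}(1-\epsilon^*)$,
\begin{equation*}
\E[d_n(K)] \;\leq\; 1 - \epsilon^*\, \P\bigl(K \geq t_{\mix}^{(n)}(1-\epsilon^*)\bigr) \;=\; 1 - \epsilon^* (1-\alpha_{n,\epsilon^*})^{t_{\mix}^{(n)}(1-\epsilon^*)} \;\longrightarrow\; 1 - \epsilon^* e^{-c} \;<\; 1 .
\end{equation*}
Since this is uniform in $\sigma$, no choice of $\{\sigma_{n,\epsilon^*}\}$ makes $\|\pi_n - \pi_{\alpha_{n,\epsilon^*},\sigma_{n,\epsilon^*}}\| \to 1$, so Condition \ref{condSens} fails.

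The main obstacle is the monotonicity-transfer step in the forward direction, where pre-cutoff is used crucially: one must combine the opposing monotonicities ($\alpha_{n,\epsilon} t_{\mix}^{(n)}(\epsilon)$ non-increasing in $\epsilon$ versus $\alpha_{n,\epsilon} t_{\mix}^{(n)}(1-\epsilon)$ non-decreasing) with the uniform pre-cutoff bound on $t_{\mix}^{(n)}(\delta)/t_{\mix}^{(n)}(1-\delta)$ to convert the ``sup $=\infty$'' in the coincides definition into a pointwise ``liminf $=\infty$'' at every $\epsilon$. The reverse direction is cleaner once the natural $\alpha_{n,\epsilon}$ is identified; the explicit restart formula produces a $\sigma$-uniform bound that makes no reference to the chain's cutoff behavior. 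A minor side point I would also address is that the candidate $c/t_{\mix}^{(n)}(1-\epsilon)$ lies in $(0,1)$ for all large $n$, which holds for $c$ sufficiently small; laziness, in turn, ensures the existence of coinciding sequences needed to formulate the condition.
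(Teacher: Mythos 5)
Your proposal follows essentially the same route as the paper. The forward direction is the paper's Lemma~\ref{lemCoincidesProperty}: use the upper bound of the interval in~\eqref{eqDefnCoincidesRestate} to show $\alpha_{n,\epsilon}t_{\mix}^{(n)}(\epsilon)$ and $\alpha_{n,\epsilon}t_{\mix}^{(n)}(1-\epsilon)$ have opposing monotonicities in $\epsilon$, then apply the uniform pre-cutoff constant to transfer the ``$\sup = \infty$'' into a pointwise divergence. The reverse direction is also the paper's: pick $\alpha_{n,\epsilon} = c/t_{\mix}^{(n)}(1-\epsilon)$, verify it coincides with the mixing times using~\eqref{eqPrecutFailNice}, and upper-bound $\|\pi_n - \pi_{\alpha,\sigma}\|$ uniformly in $\sigma$ via Lemma~\ref{lemPropRestart} by splitting the geometric series at $t_{\mix}^{(n)}(1-\epsilon^*)$.

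Two small corrections. First, you invoke Corollary~\ref{corLowerWithDeltas} to conclude $\|\pi_n - \pi_{\alpha_{n,\epsilon},\sigma_{n,\epsilon}}\| \to 1$; as stated, that corollary only yields $\liminf_n \|\pi_n - \tilde\pi_n\| \geq 1 - 3\delta$ for a fixed $\delta$, not a limit of $1$. You need Corollary~\ref{corLowerWithoutDeltas}, which performs a diagonal argument over $\delta_k \to 0$ (choosing a perturbation depending on $n$ and $k$ simultaneously) to upgrade the $1-3\delta$ bound to exact convergence. The paper's proof cites that corollary at the corresponding step. Second, your claimed limit $1 - \epsilon^* e^{-c}$ in the reverse direction uses $(1 - c/t)^t \to e^{-c}$, which requires $t_{\mix}^{(n)}(1-\epsilon^*) \to \infty$; that is not guaranteed without further assumptions (e.g.\ the CGB has $t_{\mix}^{(n)}(1-\epsilon) = 1$). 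The paper sidesteps this by applying Bernoulli's inequality directly, $(1 - c/t)^t \geq 1 - c$, yielding the $n$-uniform bound $\|\pi_n - \pi_{\alpha_{n,\epsilon^*},\sigma}\| \leq 1 - \epsilon^*(1-c) < 1$ regardless of whether $t_{\mix}$ diverges. Your argument should be rephrased accordingly, but the conclusion survives.
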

\begin{proof}
See Section \ref{secProofEquivalence}.
\end{proof}

\section{Illustrative examples} \label{secExamples}

Our results suggest a deep connection between some notion of cutoff and some notion of perturbation sensitivity. Here we illustrate this with two example chains called the \textit{winning streak reversal} (WSR) and the \textit{complete graph bijection} (CGB) (denoted Chain 1 and Chain 2, respectively, in Figure \ref{fig_intro} from the introduction).

\subsection{Definitions}

\begin{figure}
\centering
\includegraphics[height=1in]{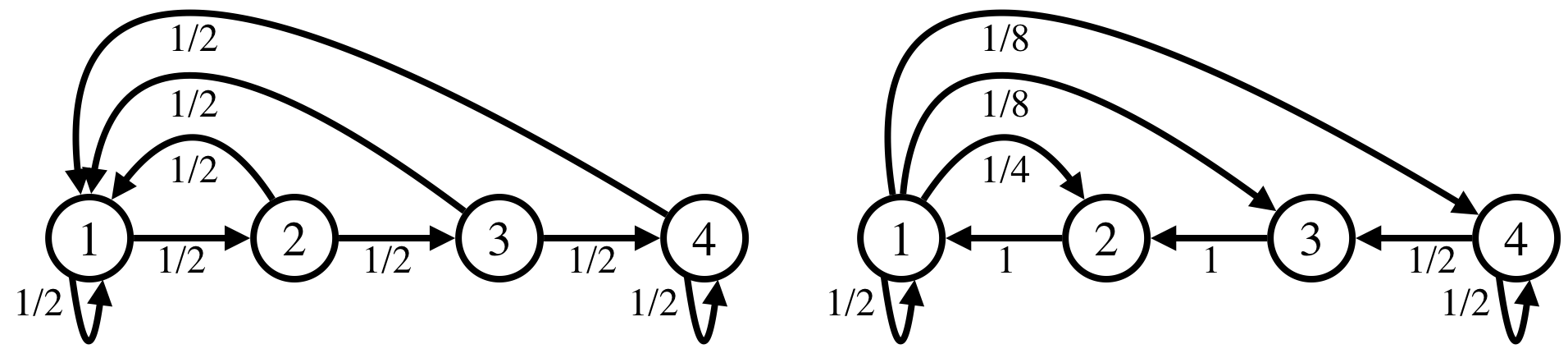}
\caption{Winning streak chain (left) and its reversal (right) for $n = 4$.} \label{fig_ex_wsr}
\end{figure}
\begin{figure}
\centering
\includegraphics[height=1in]{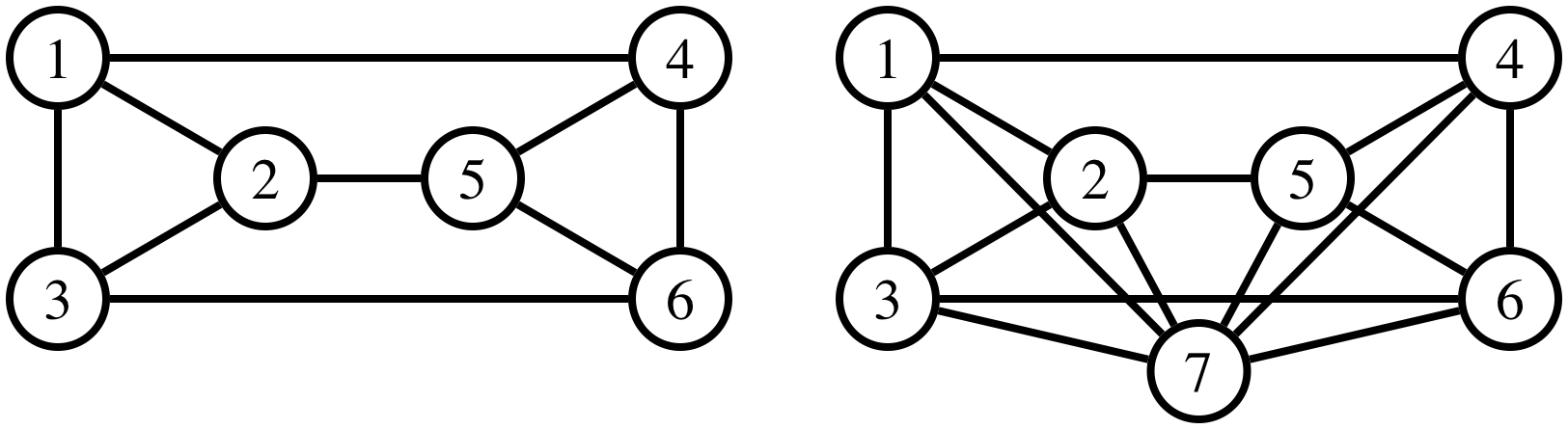}
\caption{Complete graph bijection for $n=6$ (left) and $n=7$ (right).} \label{fig_ex_cgb}
\end{figure}

The WSR is taken from \cite{levin2009markov}. As its name suggests, this chain is the time reversal of the so-called \textit{winning streak} chain. The winning streak chain is shown at left in Figure \ref{fig_ex_wsr} and has the following interpretation. At each step, one plays a fair game. If the game is won, the winning streak is increased, meaning the state is increased by 1 (unless the current state is $n$, in which case the state remains $n$); if the game is lost, the winning streak ends, meaning the state returns to its lowest value. (Given this interpretation, it is more sensible to use state space $\{0,\ldots,n-1\}$, but for consistency with the rest of this paper, we use state space $\{1,\ldots,n\}$.) The reversal of this chain, which we analyze, has transition matrix and stationary distribution
\begin{gather} \label{eqWsrTransAndStat}
P_n(i,j) = \begin{cases} 2^{-j} , & i = 1, j \in \{1,\ldots,n-1\} \\ 2^{-n+1} , & i=1, j = n \\ 1 , & i \in \{2,\ldots,n-1\}, j = i-1 \\ 2^{-1} , & i = n, j \in \{n-1,n\}  \end{cases} , 
\pi_n(i) = \begin{cases} 2^{-i} , & i \in \{1,\ldots,n-1\} \\  2^{-n+1} , & i = n \end{cases} \end{gather}
Note that $P_n(1,i) = \pi_n(i)\ \forall\ i \in [n]$; hence, the chain started from state 1 reaches stationarity (exactly) after 1 step. Furthermore, the chain starting from $i \in \{2,\ldots,n-1\}$ deterministically transitions to state 1 in $i-1$ steps and thus reaches stationarity (again, exactly) after $i$ steps. As will be seen, this implies a particularly strong form of cutoff.

For the CGB, we construct complete graphs on nodes $\{1,\ldots,n/2\}$ and $\{1+n/2,\ldots,n\}$; we then add edges between $i$ and $i+n/2$ for each $i \in [n/2]$, corresponding to the bijection $i \mapsto i+n/2$. For $n$ odd, we construct this graph for $n-1$, then add an auxiliary node $n$, along with an edge between $n$ and every $i \in [n-1]$. Figure \ref{fig_ex_cgb} shows these graphs for $n=6$ and $n=7$. We consider the lazy random walks on these graphs. The transition matrices are
\begin{gather} \label{eqCgbTrans}
P_n = \frac{I_n}{2} + \frac{1}{n} \begin{bmatrix} 1_{\frac{n}{2}}^{\trans} 1_{\frac{n}{2}} - I_{\frac{n}{2}} & I_{\frac{n}{2}} \\ I_{\frac{n}{2}} & 1_{\frac{n}{2}}^{\trans} 1_{\frac{n}{2}} - I_{\frac{n}{2}} \end{bmatrix}\ \forall\ n \textrm{ even}, \\
P_n = \frac{1}{2} I_n + \frac{1}{n+1} \begin{bmatrix} 1_{\frac{n-1}{2}}^{\trans} 1_{\frac{n-1}{2}} - I_{\frac{n-1}{2}} & I_{\frac{n-1}{2}} & 1_{\frac{n-1}{2}}^{\trans} \\ I_{\frac{n-1}{2}} & 1_{\frac{n-1}{2}}^{\trans} 1_{\frac{n-1}{2}} - I_{\frac{n-1}{2}} & 1_{\frac{n-1}{2}}^{\trans} \\ \frac{n+1}{2(n-1)}  1_{\frac{n-1}{2}} &\frac{n+1}{2(n-1)}   1_{\frac{n-1}{2}} & 0 \end{bmatrix}\ \forall\ n \textrm{ odd} .
\end{gather}
It is a standard result that the degree distribution is stationary for lazy random walks on undirected graphs, so we have
\begin{gather} \label{eqCgbStat}
\pi_n(i) = \frac{1}{n}\ \forall\ i \in [n], n \textrm{ even} , \quad \pi_n(i) = \begin{cases}  \frac{n+1}{(n+3)(n-1)} , & i \in [n-1] \\ \frac{2}{n+3} , & i = n \end{cases}\ \forall\ n \textrm{ odd} .
\end{gather}

\subsection{Mixing and perturbation analysis} \label{secExamplesAnalysis}

We next state a proposition that estimates the mixing times of these chains. The proposition contains several results. First, we show both chains have $\Theta(n)$ $\epsilon$-mixing time, for any fixed $\epsilon \in (0,1/2)$. Furthermore, the proposition says that for the WSR and for any such $\epsilon$,
\begin{equation} \label{eqWsrCutoffBehavior}
1 \leq \frac{t_{\mix}^{(n)}(\epsilon) }{ t_{\mix}^{(n)}(1-\epsilon) } = 1 + \Theta \left( \frac{1}{n} \right) .
\end{equation}
Hence, the ratios in \eqref{eqWsrCutoffBehavior} converge to 1 at rate $n^{-1}$, a particularly strong notion of cutoff (the basic cutoff property, \eqref{eqDefnCutoff}, imposes no rate of convergence). In contrast, for the CGB, the proposition shows that these ratios are $\Theta(n)$, the maximum (up to constants) among all chains with $\Theta(n)$ $\epsilon$-mixing times. In summary, while both chains have equivalent $\epsilon$-mixing times, their cutoff behaviors are at opposite extremes among such chains.
\begin{proposition} \label{propExamplesMixing}
Let $\epsilon \in (0,1/2)$ be independent of $n$. Then the following hold:
\begin{itemize}
\item Suppose $\{ P_n \}_{n \in \N}$ is the WSR. Then
\begin{equation}
(n - 1) - \log_2(1/\epsilon) < t_{\mix}^{(n)}(1-\epsilon) \leq n-1 , \quad t_{\mix}^{(n)}(\epsilon) = n-1 \quad \forall\ n \in \N .
\end{equation}
\item Suppose $\{ P_n \}_{n \in \N}$ is the CGB.  Then
\begin{equation}
t_{\mix}^{(n)}(1-\epsilon) = 1\ \forall\ n \in \N \textrm{ sufficiently large}, \quad t_{\mix}^{(n)}(\epsilon) = \Theta(n) .
\end{equation}
\end{itemize}
\end{proposition}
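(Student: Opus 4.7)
The proof handles each chain in turn through largely explicit calculations.

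For the WSR, I exploit the chain's near-deterministic structure. For $i \in \{2,\ldots,n-1\}$, the chain moves deterministically $i \mapsto i{-}1 \mapsto \cdots \mapsto 1$, and since $P_n(1,\cdot) = \pi_n$, we have $e_i P_n^t = \pi_n$ exactly for $t \geq i$. For $i = n$, laziness at state $n$ introduces a geometric waiting time $T$ (with $\P(T=k) = 2^{-k}$) before the chain leaves $n$. Decomposing on $T$, one checks that at time $n-1$ the event $T = k \in \{1,\ldots,n-1\}$ contributes mass $2^{-k}$ to state $k$ while $\P(T \geq n) = 2^{-(n-1)}$ remains at $n$; this matches $\pi_n$ exactly, so $e_n P_n^{n-1} = \pi_n$ and $d_n(n-1) = 0$. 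For $t \leq n-2$, the same decomposition shows $e_n P_n^t$ puts zero mass on $\{1,\ldots,n-t-1\}$; a brief check of signs on each state confirms that the TV distance to $\pi_n$ equals the missing $\pi_n$-mass $1 - 2^{-(n-t-1)}$. Maximizing over $i$ (with $i = n-1$ also attaining the bound, landing deterministically at state $n-1-t$) gives $d_n(t) = 1 - 2^{-(n-t-1)}$ for $t \leq n-2$.

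From $d_n(n-2) = 1/2$ and $d_n(n-1) = 0$ one obtains $t_{\mix}^{(n)}(\epsilon) = n-1$ for $\epsilon \in (0,1/2)$. Solving $1 - 2^{-(n-t-1)} \leq 1-\epsilon$ for the smallest integer $t$ gives $t_{\mix}^{(n)}(1-\epsilon) = \lceil n-1-\log_2(1/\epsilon) \rceil$, which lies in $(n-1-\log_2(1/\epsilon),\, n-1]$, as stated.

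For the CGB with $n$ even, the claim $t_{\mix}^{(n)}(1-\epsilon) = 1$ is a one-step computation: $e_i P_n$ has mass $1/2$ on $i$, mass $1/n$ on each of $i$'s $n/2$ neighbors, and zero elsewhere, so by vertex-transitivity $d_n(1) = 1/2 - 1/n$, which is below $1-\epsilon$ for large $n$; and $d_n(0) = 1 - 1/n > 1-\epsilon$ for large $n$. For $t_{\mix}^{(n)}(\epsilon) = \Theta(n)$ I use two standard tools. The lower bound comes from the bottleneck ratio: the cut $S = \{1,\ldots,n/2\}$ has $\pi_n(S) = 1/2$ and flow $1/(2n)$ across the $n/2$ matching edges, giving $\Phi = 1/n$ and hence $t_{\mix}^{(n)}(\epsilon) = \Omega(n)$ via the standard bottleneck bound. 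For the upper bound I diagonalize $P_n$ using its block structure on the subspaces spanned by vectors $(v,v)$ and $(v,-v)$: the non-trivial eigenvalues are $1-2/n$ (simple), $1/2$ (multiplicity $n/2-1$), and $1/2 - 2/n$ (multiplicity $n/2-1$). Since $\lambda_2 = 1 - 2/n$ and all other non-trivial eigenvalues have magnitude at most $1/2$, the standard $L^2$-mixing bound (in terms of $\pi$-weighted inner products) gives $t_{\mix}^{(n)}(\epsilon) = O(n)$ for fixed $\epsilon$. For $n$ odd, the universal vertex perturbs $\pi_n$ and the eigenvalues slightly but does not change the block-structure analysis materially; the same $\Theta(n)$ conclusion follows.

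The main obstacles are bookkeeping: for the WSR, tracking the geometric-waiting-time decomposition from state $n$ and the signs that make the TV computation collapse to $1 - 2^{-(n-t-1)}$; for the CGB, cleanly diagonalizing $P_n$, especially for odd $n$, where the extra universal vertex breaks the symmetric block form and forces a slightly more delicate eigenvalue calculation. Neither step is conceptually hard, but both demand care.
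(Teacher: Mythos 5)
Your proposal is correct, and for most of the proposition it tracks the paper's argument: your geometric-holding-time decomposition at state $n$ for the WSR is just a probabilistic rephrasing of the paper's induction $e_n P_n^j = \sum_{i=1}^j 2^{i-j-1} e_{n-i} + 2^{-j} e_n$, and your bottleneck-ratio lower bound for the CGB ($\Phi(S) = 1/n$ for $S = [n/2]$, hence $d_n(t) \gtrsim \tfrac12 - t/n$) is equivalent to the paper's hands-on induction showing $P_n^t(i,[n/2]) \geq (1-1/n)^t$; just note that the textbook bottleneck bound is usually stated for $t_{\mix}^{(n)}(1/4)$, so for $\epsilon \in (1/4,1/2)$ you should use it in the form $d_n(t) \geq \pi_n(S^c) - t\,\Phi(S)$. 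The genuine divergence is the CGB upper bound $t_{\mix}^{(n)}(\epsilon) = O(n)$: the paper devotes all of Section \ref{secProofExampleMain} to an explicit two-stage coupling (same clique, then same state, with the auxiliary vertex handled separately for odd $n$), whereas you diagonalize. Your spectrum for even $n$ is right ($1$, $1-2/n$, then $1/2$ and $1/2 - 2/n$ each with multiplicity $n/2-1$), but the step that demands care is the one you only gesture at: the crude bound $\|e_x P_n^t - \pi_n\| \leq \tfrac12 \sqrt{(1-\pi_n(x))/\pi_n(x)}\,\lambda_\star^t$ gives only $O(n\log n)$ here, so you must use the eigenfunction-weighted version $4\|e_x P_n^t - \pi_n\|^2 \leq \sum_{j\geq 2} f_j(x)^2 \lambda_j^{2t}$ and exploit that the eigenfunction at $1-2/n$ is the $\pm 1$ clique indicator (so $f_2(x)^2 = 1$) while the remaining spectral mass, at most $1/\pi_n(x) = n$, sits at eigenvalues of magnitude $\leq 1/2$ and is killed geometrically; this does yield $O(n)$. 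For odd $n$ the matrix is no longer symmetric and the block decomposition must be redone in $L^2(\pi_n)$ (one finds a simple eigenvalue $1 - 3/(n+1)$ on the antisymmetric constant, everything else near $1/2$), which is routine but not free. The trade-off: your spectral route is shorter and additionally exhibits $t_{\rel}^{(n)} = \Theta(n)$ explicitly, while the paper's coupling avoids linear algebra entirely and treats the odd case without a separate eigen-computation. One shared cosmetic blemish: both you and the paper only establish $t_{\mix}^{(n)}(1-\epsilon) \geq n-1-\log_2(1/\epsilon)$, which gives the strict inequality of the proposition only when $n-1-\log_2(1/\epsilon)$ is not an integer.
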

\begin{proof}
For the WSR, much of the analysis is borrowed from \cite{levin2009markov}, which we include in Appendix \ref{appProofExamplesMixing} for completeness. For the CGB, the proofs of $t_{\mix}^{(n)}(1-\epsilon) = 1$ and $t_{\mix}^{(n)}(\epsilon) = \Omega(n)$ use fairly elementary tools and are deferred to Appendix \ref{appProofExamplesMixing}; showing  $t_{\mix}^{(n)}(\epsilon) = O(n)$ requires a nontrivial coupling construction, which is done in Section \ref{secProofExampleMain}.
\end{proof}

The next proposition shows that these polarized cutoff behaviors translate into polarized perturbation behaviors. First, for the WSR, note we cannot invoke lower bounds from our earlier analysis, since we lack laziness. However, we can prove a stronger result: namely, we can identify an uncountable class of restart perturbations such that $\| \pi_n - \pi_{\alpha_n,\sigma_n} \| \rightarrow 1$ (this is a stronger result than previous lower bounds, which only guaranteed one such perturbation). On the other hand, for the CGB, we show Theorem \ref{thmAllRegimes} fails, despite all assumptions except cutoff holding.
\begin{proposition} \label{propExamplesPerturb}
Let $\epsilon \in (0,1/2)$ be independent of $n$. Then the following hold:
\begin{itemize}
\item Suppose $\{ P_n \}_{n \in \N}$ is the WSR and $\{ \alpha_n \}_{n \in \N} \subset (0,1)$ satisfies $\alpha_n =  \Theta ( n^{-c_1} )$ for some $c_1 \in (0,1)$ independent of $n$; note $\alpha_n t_{\mix}^{(n)}(\epsilon) \rightarrow \infty$ by Proposition \ref{propExamplesMixing}. Furthermore, let $\{ \sigma_n \}_{n \in \N}$ satisfy $\sigma_n \in \Delta_{n-1}\ \forall\ n \in \N$, and, for some $c_2 > 1, c_3 > 0$ independent of $n$,
\begin{equation} \label{eqPropExamplesPerturbClass}
\lim_{n \rightarrow \infty} \sum_{i=1}^{ \floor{ c_3 \alpha_n^{-c_2} } } \sigma_n(i) = 0 .
\end{equation}
Then $\lim_{n \rightarrow \infty} \| \pi_n - \pi_{\alpha_n,\sigma_n} \| = 1$.
\item Suppose $\{ P_n \}_{n \in \N}$ is the CGB and $\{ \alpha_n \}_{n \in \N} \subset (0,1)$ satisfies
\begin{equation}
\lim_{n \rightarrow \infty}  \alpha_n n = \infty , \quad \limsup_{n \rightarrow \infty} \alpha_n = \bar{\alpha} < \frac{1}{2} ;
\end{equation}
note $\alpha_n t_{\mix}^{(n)}(\epsilon) \rightarrow \infty$ by Proposition \ref{propExamplesMixing}. Then $\forall\ \{ \tilde{P}_n \}_{n \in \N}$ s.t.\ $\tilde{P}_n \in  B(P_n,\alpha_n)\ \forall\ n \in \N$, $\limsup_{n \rightarrow \infty} \| \pi_n - \tilde{\pi}_n \| \leq  \bar{\alpha} + 1/2 < 1$.
\end{itemize}
\end{proposition}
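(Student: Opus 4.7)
The proof splits into two parts, handled by quite different techniques.

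For the WSR, the plan exploits the closed-form stationary distribution
\begin{equation}
\pi_{\alpha_n,\sigma_n} = \alpha_n \sum_{k=0}^{\infty} (1-\alpha_n)^k \sigma_n P_n^k,
\end{equation}
which follows from solving $\mu = \mu P_{\alpha_n,\sigma_n}$. Combined with \eqref{eqWsrTransAndStat}, this yields a very tractable description: starting from any $i \in \{2,\ldots,n-1\}$, the chain is deterministically at $i-k$ for $k < i-1$; starting from $i = n$, it stays at $n$ for a geometric time before deterministically decreasing, so after $k$ steps it is at state $\geq n-k$. Writing $M_n = \lfloor c_3 \alpha_n^{-c_2} \rfloor$, I would choose $L_n$ with $\alpha_n L_n \to \infty$ and $L_n = o(M_n)$, which is feasible since $c_2 > 1$ (e.g., $L_n = \lfloor \alpha_n^{-(c_2+1)/2} \rfloor$). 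With $K$ geometric with parameter $\alpha_n$ and $X_0 \sim \sigma_n$ independent of $K$, both events $\{K < L_n\}$ and $\{X_0 \geq M_n\}$ have probability tending to $1$ -- the first by a standard geometric tail bound, the second by \eqref{eqPropExamplesPerturbClass} -- and on their intersection the above dynamics force $X_K \geq M_n - L_n$. Hence $\pi_{\alpha_n,\sigma_n}(A_n) \to 1$ for $A_n = \{j \in [n] : j > M_n - L_n\}$, while $\pi_n(A_n) \to 0$ by the geometric decay of $\pi_n$ in \eqref{eqWsrTransAndStat}; the conclusion follows from $\| \pi_n - \pi_{\alpha_n,\sigma_n} \| \geq \pi_{\alpha_n,\sigma_n}(A_n) - \pi_n(A_n)$.

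For the CGB, the plan combines a one-step contraction argument with a direct calculation of $d_n(1)$. Using $\tilde{\pi}_n = \tilde{\pi}_n \tilde{P}_n$, the triangle inequality, and convexity of total variation,
\begin{equation}
\| \pi_n - \tilde{\pi}_n \| \leq \| \tilde{\pi}_n \tilde{P}_n - \tilde{\pi}_n P_n \| + \| \tilde{\pi}_n P_n - \pi_n \| \leq \alpha_n + d_n(1),
\end{equation}
where the first summand is bounded by $\max_i \| e_i \tilde{P}_n - e_i P_n \| \leq \alpha_n$ (from $\tilde{P}_n \in B(P_n,\alpha_n)$) and the second by $\max_i \| e_i P_n - \pi_n \| = d_n(1)$. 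It therefore suffices to establish $d_n(1) \leq 1/2 + O(1/n)$. For $n$ even, this is a one-line computation from \eqref{eqCgbTrans}--\eqref{eqCgbStat}: coordinate $i$ of $e_i P_n - \pi_n$ equals $1/2 - 1/n$; the $n/2$ coordinates in $([n/2] \setminus \{i\}) \cup \{i + n/2\}$ each vanish (both $e_i P_n$ and $\pi_n$ equal $1/n$ there); and the remaining $n/2 - 1$ coordinates each contribute $-1/n$, yielding $\| e_i P_n - \pi_n \| = 1/2 - 1/n$. For $n$ odd, an analogous calculation, carried out separately for $i \in [n-1]$ and $i = n$, again produces $1/2 + O(1/n)$. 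Taking $\limsup$ in the displayed inequality yields the claimed bound $\bar{\alpha} + 1/2$.

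I expect the main obstacle to be the WSR concentration step, particularly verifying that the $\sigma_n$-mass on the ``middle'' states $\{1,\ldots,M_n-1\}$, which contributes to $\pi_{\alpha_n,\sigma_n}$ in an uncontrolled way, is itself $o(1)$ by \eqref{eqPropExamplesPerturbClass}, and that the edge case $X_0 = n$ indeed delivers $X_K \geq n - K \in A_n$ (requiring $n > M_n - L_n$, which holds whenever the hypothesis is non-vacuous, i.e.\ when $c_1 c_2 < 1$). The CGB step, by contrast, reduces to a tedious but mechanical computation once the contraction bound is in place.
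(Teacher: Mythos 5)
Your proof is correct and follows essentially the same route as the paper. For the WSR you work with Lemma~\ref{lemPropRestart} exactly as the paper does, merely phrasing the geometric series as the law of $X_K$ with $K\sim\text{Geom}(\alpha_n)$ and testing on a set of ``large'' states $A_n$ rather than the complementary ``small'' set $[m_n']$ used in the paper (your $L_n$ plays the role of the paper's $m_n$, and your $M_n-L_n$ plays the role of the paper's $m_n'$); for the CGB your triangle-inequality decomposition and the direct computation of $d_n(1)$ coincide with the paper's argument, which simply cites the $d_n(1)\to 1/2$ calculation already done in the proof of Proposition~\ref{propExamplesMixing}.
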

\begin{proof}
The proof is fairly straightforward so is deferred to Appendix \ref{appProofExamplesPerturb}.
\end{proof}

We have stated Proposition \ref{propExamplesPerturb} in some generality, so it is useful to consider an example. Namely, let $\epsilon \in (0,1/2)$ and $\alpha_n = 1 / \sqrt{n}\ \forall\ n \in \N$, so that $\alpha_n t_{\mix}^{(n)}(\epsilon) \rightarrow \infty$ for both example chains. Then for the WSR, many sequences $\{ \sigma_n \}_{n \in \N}$ yield restart perturbations satisfying $\| \pi_n - \pi_{\alpha_n,\sigma_n} \| \rightarrow 1$. Some examples (easily verified to satisfy \eqref{eqPropExamplesPerturbClass}) are as follows:
\begin{itemize}
\item Uniform restart, i.e.\ $\sigma_n(i) = 1/n\ \forall\ i \in [n]$.
\item ``Flipped'' stationary restart, i.e.\ $\sigma_n(i) = \pi_n(n-i+1)\ \forall\ i \in [n]$. 
\item Deterministic restart on $\Omega( n^{3/4} )$, i.e.\ $\sigma_n = e_{i_n}$ for some $i_n = \Omega ( n^{3/4} )$.
\end{itemize}
In contrast, for this choice of $\alpha_n$ and any perturbation of the CGB, Proposition \ref{propExamplesPerturb} implies that $\limsup_{n \rightarrow \infty} \| \pi_n - \tilde{\pi}_n \| \leq 1/2$. Thus, while many restart perturbations maximally perturb the WSR, no perturbation (restart or otherwise) can maximally perturb the CGB.

\section{Related work} \label{secRelated}

We now return to discuss the existing trichotomy results mentioned in the introduction, those from \cite{caputo2019mixing,avena2018random,avena2020linking,vial2019structural}. All of these works consider the \textit{directed configuration model} (DCM), a means of constructing a graph from a given degree sequence via random edge pairings, and variants thereof It was recently shown that for random walks on sparse DCMs, and for a wider class of sparse randomly-generated chains, cutoff occurs at $\Theta(\log n)$ steps \cite{bordenave2016cutoff,bordenave2018random}. More precisely, \cite{bordenave2016cutoff,bordenave2018random} prove a probablistic analogue of \eqref{eqStepFunction}, namely
\begin{equation} \label{eqStepFunctionRandom}
d_n ( s t_{\ent}^{(n)} ) \xrightarrow[n \rightarrow \infty]{\P} \begin{cases} 1 , & s < 1 \\ 0 ,& s > 1 \end{cases} ,
\end{equation}
where $t_{\ent}^{(n)} = \Theta(\log n)$ is defined in terms of the given degrees (see Section 1 of \cite{bordenave2018random} and equation (1) of \cite{bordenave2016cutoff} for details) and $\xrightarrow[]{\P}$ denotes convergence in probability. 

Using these results, Theorem 2 in \cite{caputo2019mixing} states that for certain sequences of distributions $\{ \sigma_n \}_{n \in \N}$, the distance to stationarity $d_{\alpha_n,\sigma_n}(\cdot)$ corresponding to $P_{\alpha_n,\sigma_n}$ satisfies the following:
\begin{itemize}
\item If $\alpha_n t_{\ent}^{(n)} \rightarrow 0$,  \eqref{eqStepFunctionRandom} holds with $d_n(\cdot)$ replaced by $d_{\alpha_n,\sigma_n}(\cdot)$, i.e.\ $d_{\alpha_n,\sigma_n}(\cdot)$ is a step function.
\item If $\alpha_n t_{\ent}^{(n)} \rightarrow \infty$, $d_{\alpha_n,\sigma_n}(s  / \alpha_n) \xrightarrow[n \rightarrow \infty]{\P} e^{-s}\ \forall\ s > 0$, i.e.\ $d_{\alpha_n,\sigma_n}(t)$ decays exponentially in $t$.
\item If $\alpha_n t_{\ent}^{(n)} \rightarrow (0,\infty)$, the behavior is intermediate: for $t < t_{\ent}^{(n)}$, $d_{\alpha_n,\sigma_n}(t)$ decays exponentially, as in the $\alpha_n t_{\ent}^{(n)} \rightarrow \infty$ case; for $t > t_{\ent}^{(n)}$, $d_{\alpha_n,\sigma_n}(t) = 0$, as in the $\alpha_n t_{\ent}^{(n)} \rightarrow 0$ case.
\end{itemize}
In \cite{avena2018random}, the authors study a dynamic version of the DCM for which an $\alpha_n$ fraction of edges are randomly sampled and re-paired at each time step. The main result (Theorem 1.4) says the distance to stationarity of the non-backtracking random walk on this dynamic DCM follows a trichotomy similar to the one from \cite{caputo2019mixing}. Similar results were established under weaker assumptions in \cite{avena2020linking}. Finally, \cite{vial2019structural}, also using ideas from \cite{bordenave2016cutoff,bordenave2018random}, studies the matrix $\Pi_n$ with rows $\{ \pi_{\alpha_n,e_i} \}_{i \in [n]}$, i.e.\ the $i$-th row corresponds to restarting at node $i$. The authors study
\begin{align}
\mathcal{D}_n(\epsilon) & = \min_{K_n \subset [n]} \Big( | K_n | \\
& \quad + \left| \left\{ i \notin K_n : \inf_{ \{ \beta_v(k) \}_{k \in K_n} \subset \R  } \left\| \pi_{\alpha_n,e_i} - \left( \alpha_n e_i + {\textstyle \sum}_{k \in K_n} \beta_v(k) \pi_{\alpha_n,e_k} \right) \right\|_1 \geq \epsilon \right\} \right| \Big),
\end{align}
which can be viewed as a measure of the dimension of $\Pi_n$, and whose form is motivated algorithmically (see Section 2.3 in \cite{vial2019structural}). When $P_n$ describes the random walk on the DCM and $\alpha_n = c / t_{\ent}^{(n)}$, the authors show $\mathcal{D}_n(\epsilon) = O ( n^{f(c,\epsilon)} )$, where $f(c,\epsilon) \in (0,1)$ also depends on the given degrees (see Theorem 1 in \cite{vial2019structural}). If instead $\alpha_n t_{\ent}^{(n)} \rightarrow 0$, the authors show $\mathcal{D}_n(\epsilon) = O(1)$, and if $\alpha_n t_{\ent}^{(n)} \rightarrow \infty$, they conjecture $\mathcal{D}_n(\epsilon) = \Omega ( n^s )\ \forall\ s \in (0,1)$, e.g.\ $\mathcal{D}_n(\epsilon) = \Theta ( n / \log n )$ (see Section 7.4 in \cite{vial2019structural}).

Ultimately, as discussed in the introduction, these results all echo Theorem \ref{thmAllRegimes} and hint at a deeper phenomena. However, prior to this work, one may have (erroneously) suspected that such results rely crucially on some property of the DCM, since \cite{caputo2019mixing,avena2018random,vial2019structural} all study this generative model. In contrast, the present paper suggests that some notion of cutoff is the crucial property. Accordingly, it is unsurprising that the trichotomy results in \cite{caputo2019mixing,vial2019structural} rely on the cutoff results from \cite{bordenave2016cutoff,bordenave2018random}.

Our other result, Theorem \ref{thmEquivalence}, relates closely to the aforementioned \cite{basu2015characterization}. Here it is shown that mixing cutoff \eqref{eqDefnCutoff} is equivalent to a certain notion of ``hitting time cutoff''. Namely, Theorem 3 in \cite{basu2015characterization} shows that for sequences of lazy, reversible, and irreducible chains, cutoff \eqref{eqDefnCutoff} is equivalent to each of the following:
\begin{gather} \label{eqDefnHittingCutoff}
\exists\ \eta \in (0,1/2] \textrm{ s.t.\ } t_{\hit}^{(n)}(\eta,\epsilon) - t_{\hit}^{(n)}(\eta,1-\epsilon) = o ( t_{\hit}^{(n)}(\eta,1/4) )\ \forall\ \epsilon \in (0,1/4) , \\
\exists\ \eta \in (1/2,1) \textrm{ s.t.\ } t_{\hit}^{(n)}(\eta,\epsilon) - t_{\hit}^{(n)}(\eta,1-\epsilon) = o ( t_{\hit}^{(n)}(\eta,1/4) )\ \forall\ \epsilon \in (0,1/4), t_{\rel}^{(n)} = o ( t_{\mix}^{(n)} ) .
\end{gather}
Here $t_{\rel}^{(n)}$ is the inverse spectral gap of $P_n$ (see \eqref{eqDefnRelaxTime} in Section \ref{secProofLower})), and $t_{\hit}^{(n)}(\eta,\epsilon)$ is the first time the chain has visited all sets of stationary measure at least $\eta$ with probability at least $1-\epsilon$, from any starting state (see \eqref{eqDefnThit} in Section \ref{secProofLower}). Hence, \eqref{eqDefnHittingCutoff} roughly says that shortly after ``large'' sets are reached at all, they are reached with high probability. As discussed in Section \ref{secIntro}, Theorem \ref{thmEquivalence} nicely complements this result, since both the cutoff notion and the equivalent notion differ.

More broadly, the cutoff phenomena has been established for a variety of chains. In addition to the aforementioned \cite{bordenave2016cutoff,bordenave2018random}, a non-exhaustive list includes Markovian models of card shuffling \cite{bayer1992trailing,diaconis1981generating}, diffusion models \cite{aldous1983random,diaconis1987time}, random walks on regular graphs \cite{lubetzky2010cutoff}, walks on Ramanujan graphs \cite{lubetzky2016cutoff}, non-backtracking walks \cite{ben2017cutoff}, walks on dynamic graphs \cite{sousi2020cutoff}, walks on graphs with communities \cite{ben2020threshold}, walks on Abelian groups \cite{hermon2021cutoff}, and chains with a certain notion of non-negative curvature \cite{salez2021cutoff}. We refer readers to these papers and the references therein, along with the classical survey \cite{diaconis1996cutoff} and the textbooks \cite{aldous2002reversible,levin2009markov}, for further background on mixing times and the cutoff phenomena.

Finally, we mention some prior work with less immediate connections to our own. First, we note that the basic connection between mixing times and perturbation bounds has been previously been explored; for instance, the line of work \cite{mitrophanov2003stability,mitrophanov2005sensitivity} derives upper bounds for perturbation error in terms of mixing times. However, our lower bounds and the precise asymptotic characterization in Theorem \ref{thmAllRegimes} are (to the best of our knowledge) new. In the PageRank literature, another relevant paper is \cite{avrachenkov2015pagerank}, which estimates $\pi_{\alpha_n,\sigma_n}$ as a mixture of $\sigma_n$ and the degree distribution; in this sense, the results in \cite{avrachenkov2015pagerank} are more precise than ours, but they are restricted to a certain class of $P_n$.

\section{Proof of Theorem \ref{thmAllRegimes}} \label{secProofTrichotomy}

In this section, we prove the trichotomy of Theorem \ref{thmAllRegimes}. In particular, the upper and lower bounds of the theorem are presented as Corollaries \ref{corFinalUpper} and \ref{corLowerWithoutDeltas}, respectively, in Sections \ref{secProofUpper} and \ref{secProofLower}, respectively. We note that, throughout Sections \ref{secProofUpper} and \ref{secProofLower}, we derive several intermediate bounds under weaker assumptions than those of the theorem, which may be of independent interest.

\subsection{Upper bounds} \label{secProofUpper}

We begin with a non-asymptotic upper bound on the perturbation error, which requires no assumptions of laziness, reversibility, or cutoff.
 
\begin{lemma} \label{lemGenericUpper}
Let $P_n \in \mathcal{E}_n$ and $\alpha_n \in (0,1)$. Then for any $\tilde{P}_n \in B(P_n,\alpha_n)$ and any $t \in \N$, 
\begin{equation}
\| \pi_n - \tilde{\pi}_n \| \leq 1 - (1-\alpha_n)^t + d_n(t) .
\end{equation}
\end{lemma}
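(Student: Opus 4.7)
The plan is to bound $\| \pi_n - \tilde{\pi}_n \|$ via the triangle inequality by inserting the $t$-step distribution $\tilde{\pi}_n P_n^t$ as an intermediate point, and then to control each of the two resulting terms separately. Using $\tilde{\pi}_n = \tilde{\pi}_n \tilde{P}_n^t$ (by stationarity of $\tilde{\pi}_n$ under $\tilde{P}_n$), I would write
\begin{equation}
\| \pi_n - \tilde{\pi}_n \| = \| \pi_n - \tilde{\pi}_n \tilde{P}_n^t \| \leq \| \pi_n - \tilde{\pi}_n P_n^t \| + \| \tilde{\pi}_n P_n^t - \tilde{\pi}_n \tilde{P}_n^t \|,
\end{equation}
and then show the first summand is at most $d_n(t)$ and the second at most $1 - (1-\alpha_n)^t$.

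The first summand is easy. Since $\pi_n = \sum_i \tilde{\pi}_n(i) \pi_n$ (because $\tilde{\pi}_n$ is a probability vector) and $\tilde{\pi}_n P_n^t = \sum_i \tilde{\pi}_n(i) e_i P_n^t$, the convexity of total variation yields
\begin{equation}
\| \pi_n - \tilde{\pi}_n P_n^t \| \leq \sum_{i=1}^n \tilde{\pi}_n(i) \| \pi_n - e_i P_n^t \| \leq \max_{i \in [n]} \| e_i P_n^t - \pi_n \| = d_n(t) ,
\end{equation}
using the definition \eqref{eqDefnDnT}.

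The second summand is the real content, and the natural tool is a coupling argument, stepping through the two chains one step at a time and using the row-wise bound $\| e_i P_n - e_i \tilde{P}_n \| \leq \alpha_n$ from the definition \eqref{eqDefnPnBall}. Concretely, I would construct processes $(Y_0, \ldots, Y_t)$ and $(\tilde{Y}_0, \ldots, \tilde{Y}_t)$ with $Y_0 = \tilde{Y}_0 \sim \tilde{\pi}_n$, where at each step, conditional on $Y_k = \tilde{Y}_k = i$, the pair $(Y_{k+1}, \tilde{Y}_{k+1})$ is drawn from an optimal coupling of $e_i P_n$ and $e_i \tilde{P}_n$, so that $\P(Y_{k+1} \neq \tilde{Y}_{k+1} \mid Y_k = \tilde{Y}_k) \leq \alpha_n$; after the chains first disagree, they may evolve independently. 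Iterating, $\P(Y_t = \tilde{Y}_t) \geq (1-\alpha_n)^t$, and since the marginals of $Y_t$ and $\tilde{Y}_t$ are $\tilde{\pi}_n P_n^t$ and $\tilde{\pi}_n \tilde{P}_n^t$, respectively, the coupling characterization of total variation gives
\begin{equation}
\| \tilde{\pi}_n P_n^t - \tilde{\pi}_n \tilde{P}_n^t \| \leq \P(Y_t \neq \tilde{Y}_t) \leq 1 - (1-\alpha_n)^t .
\end{equation}
Combining the two bounds completes the proof.

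The only step that requires genuine care is the coupling construction, since one must verify that the marginals of $Y_t$ and $\tilde{Y}_t$ are indeed $\tilde{\pi}_n P_n^t$ and $\tilde{\pi}_n \tilde{P}_n^t$; this is just an induction on $k$ using that, at each step, the optimal coupling preserves the two conditional marginals $e_i P_n$ and $e_i \tilde{P}_n$. Everything else is either a direct invocation of the triangle inequality, convexity of total variation, or the standard coupling inequality, so I do not anticipate any real obstacles.
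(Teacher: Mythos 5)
Your proposal is correct and follows essentially the same approach as the paper's proof: the triangle inequality with $\tilde{\pi}_n P_n^t$ as the intermediate point, convexity to bound the first term by $d_n(t)$, and a step-by-step coupling seeded at $\tilde{\pi}_n$ (optimal per-step couplings while the chains agree, arbitrary afterward) to bound the second term by $1-(1-\alpha_n)^t$. The inductive verification of the marginals that you flag as needing care is exactly the point the paper also spells out, so there is no gap.
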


Before proving the lemma, we recall the following standard notions (see Lemma \ref{lemBasicTV} in Appendix \ref{appExisting} for details). A \textit{coupling} of $\mu, \nu \in  \Delta_{n-1}$ is pair of random variables $(X,Y)$ with marginal distributions $\mu$ and $\nu$, $\| \mu - \nu \| = \inf \{ \P ( X \neq Y ) : (X,Y) \text{ is a coupling of } (\mu,\nu) \} $, and the infimum is attained. We call any such $(X,Y)$ attaining the infimum an \textit{optimal coupling}, and we say a coupling \textit{succeeds} if $X=Y$. Hence, an optimal coupling of $(\mu,\nu)$ succeeds with probability $1-\| \mu - \nu \|$.

To prove Lemma \ref{lemGenericUpper}, we first use the triangle inequality, convexity, and the definition of the distance to stationarity function \eqref{eqDefnDnT} to obtain
\begin{equation} \label{lemGenericUpperTriangle}
\| \pi_n - \tilde{\pi}_n \| \leq \| \pi_n - \tilde{\pi}_n P_n^t \| + \| \tilde{\pi}_n P_n^t - \tilde{\pi}_n \| \leq d_n(t) + \| \tilde{\pi}_n P_n^t - \tilde{\pi}_n \| .
\end{equation}
Hence, it suffices to show $\| \tilde{\pi}_n P_n^t - \tilde{\pi}_n \| = \| \tilde{\pi}_n P_n^t - \tilde{\pi}_n \tilde{P}_n^t \| \leq 1 - (1-\alpha_n)^t$. To do so, we use a sequence of $t$ optimal couplings to construct a (possibly suboptimal) coupling of $(\tilde{\pi}_n P_n^t , \tilde{\pi}_n \tilde{P}_n^t)$. Our construction ensures that the coupling of $(\tilde{\pi}_n P_n^t , \tilde{\pi}_n \tilde{P}_n^t)$ succeeds if all $t$ optimal couplings succeed, and the definition of $B(P_n,\alpha_n)$ ensures that each optimal coupling succeeds with probability at least $1-\alpha_n$. Hence, $(\tilde{\pi}_n P_n^t , \tilde{\pi}_n \tilde{P}_n^t)$ succeeds with probability at least $(1-\alpha_n)^t$, which implies the desired bound. This is made precise next.

\begin{proof}[Proof of Lemma \ref{lemGenericUpper}]
As discussed above, we construct a coupling to show $\| \tilde{\pi}_n P_n^t - \tilde{\pi}_n \| \leq 1 - (1-\alpha_n)^t$. Namely, we inductively define a sequence of random variables $\{ X_n(\tau), \tilde{X}_n(\tau) \}_{\tau=0}^t$ as follows:
\begin{itemize}
\item For $\tau = 0$, sample $X_n(\tau)$ from $\tilde{\pi}_n$, and set $\tilde{X}_n(\tau) = X_n(\tau)$.
\item For $\tau \in \{1,\ldots,t\}$, consider two cases:
\begin{enumerate}
\item If $X_n(\tau-1) \neq \tilde{X}_n(\tau-1)$, independently sample $X_n(\tau)$ from $e_{X_n(\tau-1)} P_n$ and $\tilde{X}_n(\tau)$ from $e_{ \tilde{X}_n(\tau-1) } \tilde{P}_n$, respectively. \label{caseTrichotUpperCouplingNotEqual}
\item If $X_n(\tau-1) = \tilde{X}_n(\tau-1)$, let $(X_n(\tau),\tilde{X}_n(\tau))$ be any optimal coupling of $e_{X_n(\tau-1)} P_n$ and $e_{\tilde{X}_n(\tau-1)} \tilde{P}_n = e_{X_n(\tau-1)} \tilde{P}_n$. \label{caseTrichotUpperCouplingEqual}
\end{enumerate}
\end{itemize}
We claim $X_n(\tau) \sim \tilde{\pi}_n P_n^{\tau} , \tilde{X}_n(\tau) \sim \tilde{\pi}_n\ \forall\ \tau \in \{0,\ldots,\tau\}$, where $Z \sim \mu$ means the marginal distribution of $Z$ is $\mu$. Note that, since $\tilde{\pi}_n$ is the stationary distribution of $\tilde{P}_n$, this claim is equivalent to $X_n(\tau) \sim \tilde{\pi}_n P_n^{\tau} , \tilde{X}_n(\tau) \sim \tilde{\pi}_n \tilde{P}_n^{\tau}\ \forall\ \tau$. For $\tau = 0$ and Case \ref{caseTrichotUpperCouplingNotEqual} of $\tau \in \{1,\ldots,t\}$, this latter claim clearly holds inductively by the construction above; for Case \ref{caseTrichotUpperCouplingEqual} of $\tau \in \{1,\ldots,t\}$, it holds inductively and by the coupling definition. Therefore, by definition of optimal coupling,
\begin{equation}\label{lemGenericUpperCoupling}
\| \tilde{\pi}_n P_n^t - \tilde{\pi}_n \| \leq 1 - \P ( X_n(t) = \tilde{X}_n(t) )  .
\end{equation}
We thus seek a lower bound for $\P ( X_n(t) = \tilde{X}_n(t) )$. We first write
\begin{align}
& \P ( X_n(t) = \tilde{X}_n(t) ) =  \E [ \P ( X_n(t) = \tilde{X}_n(t) | X_n(t-1) , \tilde{X}_n(t-1) ) ] \\
&  \quad \geq  \E [ \P ( X_n(t) = \tilde{X}_n(t) | X_n(t-1) , \tilde{X}_n(t-1) ) 1 ( X_n(t-1) = \tilde{X}_n(t-1) ) ] ,
\end{align}
where $1(\cdot)$ is the indicator function. Now conditioned on $X_n(t-1) , \tilde{X}_n(t-1)$, the definition of an optimal coupling guarantees that, whenever $X_n(t-1) = \tilde{X}_n(t-1)$,
\begin{equation}
\P ( X_n(t) = \tilde{X}_n(t) | X_n(t-1) , \tilde{X}_n(t-1) ) = 1 - \| e_{X_n(\tau-1)} P_n - e_{X_n(\tau-1)} \tilde{P}_n \|  \geq 1 - \alpha_n ,
\end{equation}
where the inequality holds pointwise by assumption $\tilde{P}_n \in B(P_n,\alpha_n)$. Using the previous two inequalities, and since $X_n(0) = \tilde{X}_n(0)$ by construction, we then obtain
\begin{equation}\label{lemGenericUpperFin}
\P ( X_n(t) = \tilde{X}_n(t) ) \geq (1-\alpha_n) \P ( X_n(t-1) = \tilde{X}_n(t-1) ) \geq \cdots \geq (1-\alpha_n)^t .
\end{equation}
Combining \eqref{lemGenericUpperTriangle}, \eqref{lemGenericUpperCoupling}, and \eqref{lemGenericUpperFin} yields the desired inequality.
\end{proof}

Using Lemma \ref{lemGenericUpper}, we can obtain the following asymptotic bounds, which again require no assumptions of laziness, reversibility, or cutoff.
\begin{corollary} \label{corUpperWeakAss}
Let $P_n \in \mathcal{E}_n, \alpha_n \in (0,1)\ \forall\ n \in \N$, and let $\epsilon \in (0,1)$ be independent of $n$. Assume $\lim_{n \rightarrow \infty} \alpha_n t_{\mix}^{(n)}(\epsilon) = c \in [ 0,\infty )$. Then the following hold:
\begin{itemize}
\item  If $c = 0$ and $\epsilon < 1/2$, then $\forall\ \{ \tilde{P}_n \}_{n \in \N}$ s.t.\ $\tilde{P}_n \in  B(P_n,\alpha_n)\ \forall\ n \in \N$,
\begin{equation} \label{eqZeroRegime}
\lim_{n \rightarrow \infty} \| \pi_n - \tilde{\pi}_n \| = 0 .
\end{equation}
\item  If $c \in (0,\infty)$ and $\lim_{n \rightarrow \infty} t_{\mix}^{(n)}(\epsilon) = \infty$, then $\forall\ \{ \tilde{P}_n \}_{n \in \N}$ s.t.\ $\tilde{P}_n \in  B(P_n,\alpha_n)\ \forall\ n \in \N$,
\begin{equation} \label{eqMiddleRegimeUpper}
\limsup_{n \rightarrow \infty} \| \pi_n - \tilde{\pi}_n \| \leq \min \{ 1 - e^{-c} + \epsilon , 1 \} .
\end{equation}
\end{itemize}
\end{corollary}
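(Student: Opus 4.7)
The plan is to apply Lemma \ref{lemGenericUpper} with carefully chosen $t = t_n$ depending on $n$ (and in Case 1, also on an auxiliary integer $k$), and then take limits. Throughout, I will combine two facts: Bernoulli's inequality $(1-\alpha_n)^t \geq 1 - \alpha_n t$ (useful when $\alpha_n t$ is small), and the asymptotic identity $(1-\alpha_n)^{t_n} \to e^{-c}$ when $\alpha_n \to 0$ and $\alpha_n t_n \to c$ (useful when $c \in (0,\infty)$).

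For Case 2 (where $c \in (0,\infty)$ and $t_{\mix}^{(n)}(\epsilon) \to \infty$), the approach is direct. I would set $t_n = t_{\mix}^{(n)}(\epsilon)$, so $d_n(t_n) \leq \epsilon$ by the definition of the mixing time. Since $t_n \to \infty$ and $\alpha_n t_n \to c$, we must have $\alpha_n \to 0$, and a Taylor expansion of $\log(1-\alpha_n) = -\alpha_n + O(\alpha_n^2)$ gives $t_n \log(1-\alpha_n) = -\alpha_n t_n + O(\alpha_n \cdot \alpha_n t_n) \to -c$, i.e., $(1-\alpha_n)^{t_n} \to e^{-c}$. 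Plugging into Lemma \ref{lemGenericUpper}, $\limsup_n \| \pi_n - \tilde{\pi}_n\| \leq 1 - e^{-c} + \epsilon$; the minimum with $1$ is automatic from the basic fact that total variation distance is bounded by $1$.

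Case 1 (where $c = 0$ and $\epsilon < 1/2$) is the main obstacle, because simply setting $t_n = t_{\mix}^{(n)}(\epsilon)$ only yields $\limsup \| \pi_n - \tilde{\pi}_n \| \leq \epsilon$, not $0$. To sharpen this, I would exploit the submultiplicativity of the symmetric distance from stationarity $\bar{d}_n(t) = \max_{i,j \in [n]} \| e_i P_n^t - e_j P_n^t \|$, which satisfies $\bar{d}_n(s+t) \leq \bar{d}_n(s)\, \bar{d}_n(t)$ and $d_n(t) \leq \bar{d}_n(t) \leq 2 d_n(t)$ (standard results, e.g.\ Chapter 4 of \cite{levin2009markov}). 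This gives the key amplification
\begin{equation}
d_n\bigl( k\, t_{\mix}^{(n)}(\epsilon) \bigr) \leq \bar{d}_n\bigl( t_{\mix}^{(n)}(\epsilon) \bigr)^k \leq (2\epsilon)^k \quad \forall\ k \in \N.
\end{equation}
Setting $t_n = k\, t_{\mix}^{(n)}(\epsilon)$ in Lemma \ref{lemGenericUpper} and applying Bernoulli's inequality yields
\begin{equation}
\| \pi_n - \tilde{\pi}_n \| \leq k\, \alpha_n\, t_{\mix}^{(n)}(\epsilon) + (2\epsilon)^k.
\end{equation}
Taking $\limsup_{n \to \infty}$ with $k$ fixed kills the first term (since $\alpha_n t_{\mix}^{(n)}(\epsilon) \to 0$), leaving $(2\epsilon)^k$. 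Finally, taking $k \to \infty$ and using $2\epsilon < 1$ drives this to $0$.

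The crux of the proof, and the reason for the $\epsilon < 1/2$ assumption in Case 1, lies precisely in this amplification step: submultiplicativity allows a contraction factor $2\epsilon < 1$ to be iterated arbitrarily many times while the perturbation budget $k\, \alpha_n t_{\mix}^{(n)}(\epsilon)$ still vanishes, but only if $2\epsilon < 1$. This is exactly the mechanism that later (under the cutoff assumption) lets one remove the $\epsilon < 1/2$ restriction in the theorem, since cutoff gives additional control over $d_n$ at multiples of the mixing time.
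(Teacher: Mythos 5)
Your proof is correct and takes essentially the same approach as the paper: both cases apply Lemma \ref{lemGenericUpper}, with Case 2 using $t = t_{\mix}^{(n)}(\epsilon)$ directly and Case 1 using $t = k\,t_{\mix}^{(n)}(\epsilon)$ together with the standard amplification $d_n(k\,t_{\mix}^{(n)}(\epsilon)) \leq (2\epsilon)^k$ (the paper's Lemma \ref{lemBasicMixing}). The only cosmetic difference is in Case 1, where you take the iterated limit (first $n \to \infty$ with $k$ fixed, then $k \to \infty$) whereas the paper chooses a single diagonal sequence $k_n = \ceil*{1/\sqrt{\alpha_n t_{\mix}^{(n)}(\epsilon)}}$ that simultaneously sends both terms to zero; both are valid and the arguments are otherwise identical.
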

\begin{proof}
To prove \eqref{eqZeroRegime}, we first use Lemma \ref{lemGenericUpper} and Bernoulli's inequality to write
\begin{equation}
\| \pi_n - \tilde{\pi}_n \| \leq 1 - (1-\alpha_n t) + d_n(t) = \alpha_n t + d_n(t) .
\end{equation}
Setting $k_n = \ceil*{ 1 / \sqrt{ \alpha_n  t_{\mix}^{(n)}(\epsilon) } }$, choosing $t  = k_n t_{\mix}^{(n)}(\epsilon)$, and using Lemma \ref{lemBasicMixing} from Appendix \ref{appExisting} (an elementary result for mixing times) then gives
\begin{align}
\| \pi_n - \tilde{\pi}_n \| & \leq k_n \alpha_n t_{\mix}^{(n)}(\epsilon) + (2 \epsilon)^{k_n}  %\leq \left( \frac{1}{ \sqrt{ \alpha_n  t_{\mix}^{(n)}(\epsilon) } } + 1 \right) \alpha_n t_{\mix}^{(n)}(\epsilon) + ( 2 \epsilon )^{ 1 / \sqrt{ \alpha_n  t_{\mix}^{(n)}(\epsilon) } } \\
%& = \sqrt{ \alpha_n  t_{\mix}^{(n)}(\epsilon) } + \alpha_n  t_{\mix}^{(n)}(\epsilon) + ( 2 \epsilon )^{ 1 / \sqrt{ \alpha_n  t_{\mix}^{(n)}(\epsilon) } } 
\xrightarrow[n \rightarrow \infty]{} 0 ,
\end{align}
where the limit holds since $\alpha_n t_{\mix}^{(n)}(\epsilon) \rightarrow 0, \epsilon < 1/2$ by assumption.

To prove \eqref{eqMiddleRegimeUpper}, we simply choose $t = t_{\mix}^{(n)}(\epsilon)$ in Lemma \ref{lemGenericUpper} and use \eqref{eqDefnTmix} to obtain
\begin{equation}
\| \pi_n - \tilde{\pi}_n \| \leq 1 - ( 1 - \alpha_n )^{ t_{\mix}^{(n)}(\epsilon) } + \epsilon \xrightarrow[n \rightarrow \infty]{} 1 - e^{-c} + \epsilon ,
\end{equation}
where the limit holds since $\alpha_n t_{\mix}^{(n)}(\epsilon) \rightarrow c$ by assumption. (In the case $c > \log(1/\epsilon)$, the bound $\| \pi_n - \tilde{\pi}_n \| \leq 1$ in \eqref{eqMiddleRegimeUpper} is obvious.)
\end{proof}

Finally, we prove the upper bounds of Theorem \ref{thmAllRegimes}, which use the assumptions of laziness and cutoff to strenghten the results of the previous corollary. Roughly speaking, the cutoff assumption means that all mixing times are equivalent, which allows us to discard the $\epsilon < 1/2$ assumption when $c = 0$, and to take $\epsilon$ arbitrarily small in \eqref{eqMiddleRegimeUpper}. Furthermore, the laziness assumption allows us to discard the assumption $t_{\mix}^{(n)}(\epsilon) \rightarrow \infty$ when $c \in (0,\infty)$. The proof is fairly straightforward so is deferred to the appendix.

\begin{corollary} \label{corFinalUpper}
Let $P_n \in \mathcal{E}_n, \alpha_n \in (0,1)\ \forall\ n \in \N$, and let $\epsilon \in (0,1)$ be independent of $n$. Assume $\{ P_n \}_{n \in \N}$ exhibits cutoff, each $P_n$ is lazy, and $\lim_{n \rightarrow \infty} \alpha_n t_{\mix}^{(n)}(\epsilon) = c \in [0,\infty]$. Then the following hold:
\begin{itemize} 
\item If $c = 0$, then $\forall\ \{ \tilde{P}_n \}_{n \in \N}$ s.t.\ $\tilde{P}_n \in B(P_n,\alpha_n)\ \forall\ n \in \N$,
\begin{equation} \label{eqCorZeroRegime}
\lim_{n \rightarrow \infty} \| \pi_n - \tilde{\pi}_n \| = 0  .
\end{equation}
\item If $c \in (0,\infty)$, then $\forall\ \{ \tilde{P}_n \}_{n \in \N}$ s.t.\ $\tilde{P}_n \in B(P_n,\alpha_n)\ \forall\ n \in \N$,
\begin{equation} \label{eqCorMiddleRegimeUpper}
\limsup_{n \rightarrow \infty} \| \pi_n - \tilde{\pi}_n \| \leq 1 - e^{-c}  .
\end{equation}
\end{itemize}
\end{corollary}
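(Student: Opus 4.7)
The plan is to deduce both bullets from Corollary \ref{corUpperWeakAss} after first exploiting the following consequence of cutoff: whenever \eqref{eqDefnCutoff} holds, $t_{\mix}^{(n)}(\delta)/t_{\mix}^{(n)}(\eta) \to 1$ for every pair $\delta, \eta \in (0,1)$. This is a standard sandwich argument using monotonicity \eqref{eqMixMonotone}: pick $\zeta \in (0,\min(\delta, 1-\delta, \eta, 1-\eta))$, and \eqref{eqMixMonotone} places both $t_{\mix}^{(n)}(\delta)$ and $t_{\mix}^{(n)}(\eta)$ in the interval $[t_{\mix}^{(n)}(1-\zeta), t_{\mix}^{(n)}(\zeta)]$, whose endpoints are asymptotically equivalent under \eqref{eqDefnCutoff}. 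Consequently, the hypothesis $\alpha_n t_{\mix}^{(n)}(\epsilon) \to c$ upgrades to $\alpha_n t_{\mix}^{(n)}(\delta) \to c$ for every fixed $\delta \in (0,1)$.

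For the case $c = 0$, this upgrade is all that is needed: pick any $\delta \in (0,1/2)$ (say $\delta = 1/4$) and invoke the first bullet of Corollary \ref{corUpperWeakAss} with $\delta$ in place of $\epsilon$, which directly yields $\|\pi_n - \tilde{\pi}_n\| \to 0$.

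The case $c \in (0,\infty)$ is subtler because applying the second bullet of Corollary \ref{corUpperWeakAss} with $\delta$ small also requires the side hypothesis $t_{\mix}^{(n)}(\delta) \to \infty$, which is not automatic from $\alpha_n t_{\mix}^{(n)}(\delta) \to c > 0$ alone. The crux of the argument — and the sole place laziness is used — is to show that cutoff plus laziness force $t_{\mix}^{(n)}(\epsilon) \to \infty$. Suppose otherwise, so that $t_{\mix}^{(n_k)}(\epsilon) \leq T$ for some finite $T$ along a subsequence $\{n_k\}$. The first paragraph then gives $t_{\mix}^{(n_k)}(\delta) \leq 2T$ eventually for every $\delta \in (0,\epsilon)$, hence $d_{n_k}(2T) \leq \delta$. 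Laziness supplies the pointwise bound $P_{n_k}^{2T}(i,i) \geq 2^{-2T}$ for every $i$, while the total-variation estimate $|P_{n_k}^{2T}(i,i) - \pi_{n_k}(i)| \leq d_{n_k}(2T)$ yields $\pi_{n_k}(i) \geq 2^{-2T} - \delta$ uniformly in $i$. Summing gives $1 \geq n_k (2^{-2T} - \delta)$; choosing $\delta < 2^{-2T}$ then bounds $n_k$, contradicting $n_k \to \infty$.

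With $t_{\mix}^{(n)}(\epsilon) \to \infty$ established, the first paragraph yields $t_{\mix}^{(n)}(\delta) \to \infty$ and $\alpha_n t_{\mix}^{(n)}(\delta) \to c$ for every $\delta \in (0,1/2)$, so the second bullet of Corollary \ref{corUpperWeakAss} applies and gives $\limsup_n \|\pi_n - \tilde{\pi}_n\| \leq 1 - e^{-c} + \delta$; sending $\delta \to 0$ completes the proof. Everything except the contradiction in the third paragraph is bookkeeping, and that contradiction is what I expect to be the main obstacle, since it is the only step that genuinely uses the laziness assumption.
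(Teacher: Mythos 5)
Your proof is correct and follows essentially the same architecture as the paper's: use cutoff to upgrade $\alpha_n t_{\mix}^{(n)}(\epsilon)\to c$ to $\alpha_n t_{\mix}^{(n)}(\delta)\to c$ for arbitrary $\delta\in(0,1)$, use laziness to establish $t_{\mix}^{(n)}(\delta)\to\infty$, then invoke Corollary \ref{corUpperWeakAss} and let $\delta\to 0$. The main cosmetic differences are that the paper first reduces to $\epsilon=1/4$ and then extends, whereas you handle general $\epsilon$ in one pass, and the paper closes the $c\in(0,\infty)$ case by contradiction on the bound (assume $\limsup\|\pi_n-\tilde\pi_n\|>1-e^{-c}$, pick $\delta$ to violate Corollary \ref{corUpperWeakAss}), whereas you simply send $\delta\to 0$; your version is slightly more direct. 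The one genuinely different step is the laziness argument for $t_{\mix}^{(n)}(\epsilon)\to\infty$: you sum the bound $\pi_n(i)\geq 2^{-2T}-\delta$ over all $n$ states to contradict $n\to\infty$, while the paper's Lemma \ref{lemBasicMixing} singles out a single state $i_n$ with $\pi_n(i_n)\leq 1/n$ and obtains $d_n(t)\geq 2^{-t}-1/n$ directly, giving $\sup_{\delta}\liminf_n t_{\mix}^{(n)}(\delta)=\infty$ which is then combined with cutoff. Both rest on the same laziness estimate $P_n^t(i,i)\geq 2^{-t}$; the paper's version is marginally shorter, but yours is self-contained and equally valid.
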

\begin{proof}
See Appendix \ref{appProofFinalUpper}.
\end{proof}

\subsection{Lower bounds} \label{secProofLower}

We begin with some definitions. First, denote the \textit{hitting time} of $A \subset [n]$ by $T_n(A) = \inf \{ t \in \Z_+ : X_n(t) \in A \}$. Given $\eta_1, \eta_3 \in (0,1)$, we also define
\begin{equation} \label{eqDefnThit}
t_{\hit}^{(n)} (1-\eta_3,\eta_1) = \min \left\{ t : \max_{x \in [n] , A \subset [n] : \pi_n(A) \geq 1 - \eta_3} \P_x ( T_n(A) > t ) \leq \eta_1 \right\} ,
\end{equation}
where $\P_x$ denotes probability conditioned on the chain starting from $X_n(0) = x$. Thus, in words, $t_{\hit}^{(n)} (1-\eta_3,\eta_1)$ is the first time at which the chain has visited all sets of stationary measure at least $1-\eta_3$ from any starting state with probability at least $1-\eta_1$. 

The first key idea of our lower bounds is that we can use this definition to construct a restart perturbation with large perturbation error. To illustrate this, we choose $\eta_3 \approx 0$ and $\eta_1 \approx 1$ and use the definition of $ t_{\hit}^{(n)}$ to find a state $x_n$ and a set of states $A_n$ such that $\pi_n(A_n) \approx 1$ and $\P_{x_n}  ( X_n(t) \in A_n ) \approx 0\ \forall\ t < t_{\hit}^{(n)} (1-\eta_3,\eta_1)$. In particular, if $1 / \alpha_n < t_{\hit}^{(n)} (1-\eta_3,\eta_1)$, we can find $x_n$ and $A_n$ such that $\P_{x_n}  ( X_n(t) \in A_n ) \approx 0\ \forall\ t < 1/\alpha_n$. Hence, if we perturb the chain by restarting at $x_n$ with probability $\alpha_n$ at each step, the perturbed chain will rarely visit $A_n$ (since the number of steps between restarts is $1 / \alpha_n$ in expectation). This implies $\tilde{\pi}_n(A_n) \approx 0$, and since $\pi_n(A_n) \approx 1$ by definition, we obtain $\| \pi_n - \tilde{\pi}_n \| \approx 1$.

More generally, the following lemma shows we can find a perturbation such that the product $\alpha_n  t_{\hit}^{(n)} (1-\eta_3,\eta_1)$ controls the perturbation error $\| \pi_n - \tilde{\pi}_n \|$. We note this lower bound does not require laziness, reversibility, or cutoff assumptions.

\begin{lemma} \label{lemLowerWeaker}
Let $P_n \in \mathcal{E}_n$, $\alpha_n \in (0,1)$, and $\delta \in (0,1/2)$. Then $\exists\ \tilde{P}_n \in B(P_n,\alpha_n)$ s.t.\
\begin{equation}\label{lemLowerWeakerRes}
\| \pi_n - \tilde{\pi}_n \| \geq 1 - 3 \delta - \exp \left( - \alpha_n t_{\hit}^{(n)} (1-\delta,1-2\delta)  \right) .
\end{equation}
\end{lemma}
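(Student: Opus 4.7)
The plan is to construct a restart perturbation whose restart distribution is a point mass at a carefully chosen state, and then lower bound $\|\pi_n - \tilde{\pi}_n\|$ by separately estimating $\pi_n(A_n)$ and $\tilde{\pi}_n(A_n)$ on a ``bad'' set $A_n \subset [n]$ supplied by the definition of $t_{\hit}^{(n)}$.

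First I would set $\tau = t_{\hit}^{(n)}(1-\delta, 1-2\delta)$. By the minimality in definition \eqref{eqDefnThit}, the inner max at $t = \tau - 1$ exceeds $1 - 2\delta$, so there exist $x_n \in [n]$ and $A_n \subset [n]$ with $\pi_n(A_n) \geq 1-\delta$ and $\P_{x_n}(T_n(A_n) > \tau - 1) > 1-2\delta$ (the degenerate case $\tau = 0$ makes the right-hand side of \eqref{lemLowerWeakerRes} non-positive and is handled trivially, so assume $\tau \geq 1$). Because $\P_{x_n}(T_n(A_n) > s)$ is non-increasing in $s$ and $\{X_n(s) \in A_n\} \subset \{T_n(A_n) \leq s\}$, this single witness yields the \emph{uniform} bound
\begin{equation}
\P_{x_n}(X_n(s) \in A_n) < 2\delta \quad \forall\ s \in \{0, 1, \ldots, \tau - 1\} .
\end{equation}
I then take $\tilde{P}_n = P_{\alpha_n, e_{x_n}}$, the restart perturbation that jumps to $x_n$ with probability $\alpha_n$ at each step; irreducibility and aperiodicity of $\tilde{P}_n$ follow from $\tilde{P}_n(i, x_n) \geq \alpha_n$ for every $i$, so $\tilde{P}_n \in B(P_n, \alpha_n)$.

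The crucial ingredient is the renewal decomposition
\begin{equation}
\tilde{\pi}_n = \sum_{t=0}^{\infty} \alpha_n (1-\alpha_n)^t\, e_{x_n} P_n^t ,
\end{equation}
which one verifies directly: if $\mu$ denotes the right-hand side, then $\mu \tilde{P}_n = (1-\alpha_n)\mu P_n + \alpha_n e_{x_n}$, and re-indexing the series shows this equals $\mu$. Evaluating at $A_n$, splitting the sum at $t = \tau$, and using the uniform bound above for $t < \tau$ together with the trivial bound $\P_{x_n}(X_n(t) \in A_n) \leq 1$ for $t \geq \tau$ gives
\begin{equation}
\tilde{\pi}_n(A_n) \leq 2\delta \sum_{t=0}^{\tau-1} \alpha_n (1-\alpha_n)^t + \sum_{t=\tau}^{\infty} \alpha_n (1-\alpha_n)^t \leq 2\delta + (1-\alpha_n)^{\tau} ,
\end{equation}
and $1 - \alpha_n \leq e^{-\alpha_n}$ upgrades the second term to $\exp(-\alpha_n\, t_{\hit}^{(n)}(1-\delta, 1-2\delta))$.

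The conclusion is then immediate from $\|\pi_n - \tilde{\pi}_n\| \geq \pi_n(A_n) - \tilde{\pi}_n(A_n) \geq (1-\delta) - 2\delta - \exp(-\alpha_n\, t_{\hit}^{(n)}(1-\delta, 1-2\delta))$, which matches \eqref{lemLowerWeakerRes}. There is no serious obstacle; the one subtle point is the uniformity observation, namely that the single witness $(x_n, A_n)$ arising from the minimality of $t_{\hit}^{(n)}$ at $s = \tau - 1$ controls $\P_{x_n}(X_n(s) \in A_n)$ for every smaller $s$ as well. Everything else amounts to writing down the Doeblin fixed-point identity and summing two geometric series.
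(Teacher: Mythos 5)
Your proposal is correct and matches the paper's proof essentially step for step: the same witnesses $(x_n, A_n)$ from the minimality of $t_{\hit}^{(n)}$, the same restart perturbation $P_{\alpha_n, e_{x_n}}$, the same geometric-series formula for $\tilde{\pi}_n$ (the paper cites it as a separate lemma; you derive it inline), and the same split of the series at $\tau$. The only additions over the paper's write-up are welcome bookkeeping — handling the degenerate case $\tau = 0$, checking $\tilde{P}_n \in \mathcal{E}_n$, and spelling out the monotonicity that makes the single witness control all $s < \tau$.
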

\begin{proof}
First note $1-\delta,1-2\delta \in (0,1)$ by assumption on $\delta$, so $t_n := t_{\hit}^{(n)} (1-\delta,1-2\delta)$ is well-defined. Hence, by definition, $\exists\ x_n \in [n], A_n \subset [n]$ satisfying
\begin{equation} \label{eqInfWeakXnAn}
\pi_n(A_n) \geq 1 - \delta , \quad \P_{x_n} ( T_n(A_n) > t_n - 1 ) > 1 - 2 \delta .
\end{equation}
Now set $\tilde{P}_n = P_{\alpha_n, e_{x_n} }$ (i.e.\ restart at $x_n$ with probability $\alpha_n$). Then by Lemma \ref{lemPropRestart} in Appendix \ref{appExisting} (a well-known formula for the stationary distribution of a restart perturbation),
\begin{equation} \label{eqInfWeakSum}
\tilde{\pi}_n(A_n) = \alpha_n \sum_{t=0}^{t_n - 1 } (1-\alpha_n)^t \P_{x_n} ( X_n(t) \in A_n ) + \alpha_n \sum_{t=t_n  }^{\infty} (1-\alpha_n)^t \P_{x_n} ( X_n(t) \in A_n ) .
\end{equation}
We consider the two summands in \eqref{eqInfWeakSum} in turn. For the first summand, we note
\begin{equation}
\P_{x_n} ( X_n(t) \in A_n )  \leq \P_{x_n} ( T_n(A_n) \leq t ) \leq \P_{x_n} ( T_n(A_n) \leq t_n - 1 ) < 2 \delta ,
\end{equation}
where the second inequality holds for $t < t_n$, and the third holds by \eqref{eqInfWeakXnAn}. It follows that
\begin{equation}
\alpha_n \sum_{t=0}^{t_n - 1 } (1-\alpha_n)^t \P_{x_n} ( X_n(t) \in A_n ) < 2 \delta .
\end{equation}
For the second summand in \eqref{eqInfWeakSum}, we simply upper bound the probabilities by $1$ to obtain
\begin{equation}
\alpha_n \sum_{t=t_n }^{\infty} (1-\alpha_n)^t \P_{x_n} ( X_n(t) \in A_n ) \leq (1-\alpha_n)^{ t_n } \leq \exp(- \alpha_n t_n ) .
\end{equation}
Taken together, we have shown $\tilde{\pi}_n(A_n) < 2 \delta + \exp( -\alpha_n t_n )$. Combined with \eqref{eqInfWeakXnAn}, we obtain
\begin{equation} \label{eqInfWeakBeforePeres}
\| \pi_n - \tilde{\pi}_n \| \geq \pi_n(A_n) - \tilde{\pi}_n(A_n) > 1 - 3 \delta - \exp( -\alpha_n t_n ) ,
\end{equation}
which completes the proof.
\end{proof}

We have thus identified a restart perturbation for which perturbation error is lower bounded in terms of $\alpha_n  t_{\hit}^{(n)} (1-\eta_3,\eta_1)$. The second key idea of our lower bounds, taken from the aforementioned \cite{basu2015characterization}, is that $t_{\hit}^{(n)} (1-\eta_3,\eta_1)$ can be lower bounded in terms of $t_{\mix}^{(n)} (\epsilon)$. To formulate this, we first define the \textit{relaxation time}
\begin{equation} \label{eqDefnRelaxTime}
t_{\rel}^{(n)}  = \frac{1}{ 1-\lambda_n^* }  ,
\end{equation}
where $1-\lambda_n^*$ is the absolute spectral gap of $P_n$, defined by
\begin{equation} \label{eqDefnSpectralGap}
\lambda_n^* = \max \{ |\lambda| : \lambda \textrm{ is an eigenvalue of } P_n, \lambda \neq 1 \} .
\end{equation}
(Note $P_n \in \mathcal{E}_n \Rightarrow \lambda_n^* < 1$ -- see e.g.\ Lemma 12.1 in \cite{levin2009markov} -- so \eqref{eqDefnRelaxTime} is well-defined in this case.) We then have the following.
\begin{lemma}[Corollary 3.1 in \cite{basu2015characterization}] \label{lemHitIneq}
Let $P_n \in \mathcal{E}_n$ and assume $P_n$ is lazy and reversible. Then for any $\eta_1,\eta_2,\eta_3 \in (0,1)$,
\begin{equation}
t_{\mix}^{(n)} ( (\eta_1 + \eta_2) \wedge 1 ) \leq t_{\hit}^{(n)} (1-\eta_3,\eta_1) + \ceil*{ \frac{t_{\rel}^{(n)}}{2} \max \left\{ \log \left( \frac{2 (1-\eta_1)^2}{\eta_1 \eta_2 \eta_3} \right) , 0 \right\} } .
\end{equation}
\end{lemma}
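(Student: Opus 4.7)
The plan is to combine the hitting-time information for large sets (encoded by $t_{\hit}^{(n)}$) with the $L^2$ contraction coming from the spectral gap (encoded by $t_{\rel}^{(n)}$). Set $s := t_{\hit}^{(n)}(1-\eta_3,\eta_1)$ and $r := \ceil*{\tfrac{1}{2} t_{\rel}^{(n)} \max\{\log(2(1-\eta_1)^2/(\eta_1\eta_2\eta_3)),0\}}$. It suffices to prove $d_n(s+r) \le (\eta_1+\eta_2) \wedge 1$, since \eqref{eqDefnTmix} then yields the stated inequality. So fix $x \in [n]$ and $C \subset [n]$; the aim is to show $|P_n^{s+r}(x,C) - \pi_n(C)| \le \eta_1+\eta_2$.

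For this fixed $C$, define the good set
\begin{equation}
A_C := \left\{ y \in [n] : |(P_n^r 1_C)(y) - \pi_n(C)| \le \eta_2 / (1-\eta_1) \right\} .
\end{equation}
Laziness and reversibility make $P_n$ self-adjoint on $L^2(\pi_n)$ with spectral radius at most $\lambda_n^*$ on mean-zero functions, so $\|P_n^r 1_C - \pi_n(C)\|_{L^2(\pi_n)}^2 \le (\lambda_n^*)^{2r} \pi_n(C)(1-\pi_n(C)) \le (\lambda_n^*)^{2r}/4$. Chebyshev's inequality yields $\pi_n(A_C^c) \le (1-\eta_1)^2 (\lambda_n^*)^{2r} / (4\eta_2^2)$, and the choice of $r$ together with $(\lambda_n^*)^{2r} \le \exp(-2r/t_{\rel}^{(n)})$ makes this $\le \eta_3$. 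Thus $\pi_n(A_C) \ge 1-\eta_3$, so by \eqref{eqDefnThit}, $\P_x(T_n(A_C) > s) \le \eta_1$.

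To conclude, apply the strong Markov property at $\tau := T_n(A_C) \wedge s$:
\begin{equation}
P_n^{s+r}(x,C) - \pi_n(C) = \E_x \big[ (P_n^{s+r-\tau} 1_C)(X_n(\tau)) - \pi_n(C) \big] .
\end{equation}
On $\{\tau = s\} \supseteq \{T_n(A_C) > s\}$ the integrand has magnitude at most $1$ and the event has probability at most $\eta_1$, contributing at most $\eta_1$. On $\{\tau < s\}$ we have $X_n(\tau) \in A_C$ and the residual time $s+r-\tau \ge r$.

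The main obstacle is the $\{\tau < s\}$ contribution: the definition of $A_C$ controls $P_n^r 1_C - \pi_n(C)$ pointwise only at the exact time $r$, but the argument needs control at the random time $s+r-\tau \ge r$. I would handle this by fixing $\tau = t < s$, writing the conditional law of $X_n(t)$ as $\mu_t$ supported on $A_C$, and combining the pointwise bound $|(P_n^r 1_C)(y) - \pi_n(C)| \le \eta_2/(1-\eta_1)$ on $A_C$ with the $L^2$ contraction $\|P_n^{s+r-t}1_C - \pi_n(C)\|_{L^2(\pi_n)} \le (\lambda_n^*)^{s+r-t}/2$. A Cauchy--Schwarz argument against $\mu_t/\pi_n$, which is supported on $A_C$ (a set of $\pi_n$-measure $\ge 1-\eta_3$), transfers the pointwise bound from exact time $r$ to the random time; the extra $(1-\eta_1)$ in the definition of $A_C$ absorbs the conditioning on the hitting event and produces the $(1-\eta_1)^2$ factor in the stated logarithm. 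Summing the two contributions yields $|P_n^{s+r}(x,C) - \pi_n(C)| \le \eta_1+\eta_2$, completing the proof.
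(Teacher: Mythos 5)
Note first that the paper does not actually prove this lemma; it is imported verbatim as Corollary~3.1 of \cite{basu2015characterization}, so there is no ``paper's proof'' to compare against. Your task was effectively to reconstruct the Basu--Hermon--Peres argument.

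Your framework (good set $A_C$ via an $L^2$/Chebyshev estimate, then hit $A_C$ and apply the strong Markov property) is a sensible starting point, and your $L^2$ contraction estimate $\|P_n^r 1_C - \pi_n(C)\|_{L^2(\pi_n)}^2 \le (\lambda_n^*)^{2r}/4$ is correct. But the gap you flag is genuine, and the fix you sketch does not close it. On the event $\{T_n(A_C) = t < s\}$ you must control $(P_n^{s+r-t}1_C)(X_n(t)) - \pi_n(C)$ at the random time $s+r-t > r$, while membership $X_n(t)\in A_C$ only bounds the function $f := P_n^r 1_C - \pi_n(C)$ at time exactly $r$. Writing the needed quantity as $(P_n^{s-t}f)(X_n(t))$, the operator $P_n^{s-t}$ averages $f$ over all of $[n]$, including states where $|f|$ can be of order $1$, and being in $A_C$ does not help. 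Your proposed Cauchy--Schwarz repair against $\mu_t/\pi_n$ fails because $\|\mu_t/\pi_n - 1\|_{L^2(\pi_n)}$ is not controlled by the fact that $\mu_t$ is supported on a set of $\pi_n$-measure at least $1-\eta_3$: if $\mu_t$ concentrates near a low-stationary-mass state $y\in A_C$, this norm blows up like $\pi_n(y)^{-1/2}$, and nothing in the hitting-time definition rules that out. (Laziness does not save you either: it makes all eigenvalues nonnegative but does not give a pointwise monotonicity of $|P_n^u 1_C - \pi_n(C)|$ in $u$.) A secondary bookkeeping issue: with $A_C := \{|f| \le \eta_2/(1-\eta_1)\}$, Chebyshev plus the stated $r$ gives $\pi_n(A_C^c) \le \eta_1\eta_3/(8\eta_2)$, which is at most $\eta_3$ only when $\eta_1 \le 8\eta_2$ --- the constants in the stated logarithm do not match what your Chebyshev step produces, a further sign the error budget must be allocated differently than in your sketch. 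The actual proof in \cite{basu2015characterization} requires additional machinery beyond a single Chebyshev step followed by optional stopping; that machinery is exactly what resolves the random-residual-time problem you identified.
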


Combining the previous two lemmas, we obtain lower bounds of the form
\begin{equation} \label{eqLowerWithoutCutoff}
\| \pi_n - \tilde{\pi}_n \| \gtrapprox 1 - \exp \left( - \alpha_n \left(   t_{\mix}^{(n)}  -  t_{\rel}^{(n)}  \right) \right)  .
\end{equation}
This is the form we desire, except for the term $t_{\rel}^{(n)}$. However, it is known that $t_{\rel}^{(n)} = o ( t_{\mix}^{(n)}(\epsilon) )$ when pre-cutoff holds, so we can obtain nontrivial lower bounds using (pre-)cutoff assumptions. In particular, the next corollary considers the cases when cutoff holds and $\alpha_n t_{\mix}^{(n)}(\epsilon) \rightarrow (0,\infty)$, or pre-cutoff holds and $\alpha_n t_{\mix}^{(n)}(\epsilon) \rightarrow \infty$. Hence, a sharper convergence to stationarity requires a smaller perturbation magnitude (i.e.\ $\alpha_n t_{\mix}^{(n)}(\epsilon) \rightarrow (0,\infty)$ instead of $\alpha_n t_{\mix}^{(n)}(\epsilon) \rightarrow \infty$) to obtain a useful lower bound on $\| \pi_n - \tilde{\pi}_n \|$.

\begin{corollary} \label{corLowerWithDeltas}
Let $P_n \in \mathcal{E}_n, \alpha_n \in (0,1)\ \forall\ n \in \N$, and let $\delta \in (0,1/2)$ be independent of $n$. Assume each $P_n$ is lazy and reversible and $\lim_{n \rightarrow \infty} \alpha_n t_{\mix}^{(n)}(\epsilon) = c \in (0,\infty]$.
\begin{itemize}
\item If $c = \infty$ and $\{ P_n \}_{n \in \N}$ has pre-cutoff, then $\exists\ \{ \tilde{P}_n \}_{n \in \N}$ s.t.\ $\tilde{P}_n \in B(P_n,\alpha_n)\ \forall\ n \in \N$ and
\begin{equation}\label{lemLowerWeakerInf}
\liminf_{n \rightarrow \infty} \| \pi_n - \tilde{\pi}_n \| \geq 1 - 3 \delta .
\end{equation}
\item If $c \in (0,\infty)$ and $\{ P_n \}_{n \in \N}$ has cutoff, then $\exists\ \{ \tilde{P}_n \}_{n \in \N}$ s.t.\ $\tilde{P}_n \in B(P_n,\alpha_n)\ \forall\ n \in \N$ and
\begin{equation}\label{lemLowerWeakerC}
\liminf_{n \rightarrow \infty} \| \pi_n - \tilde{\pi}_n \| \geq 1 - 3 \delta - e^{-c} .
\end{equation}
\end{itemize}
\end{corollary}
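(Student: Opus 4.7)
The plan is to combine the two non-asymptotic ingredients already in hand: Lemma \ref{lemLowerWeaker} produces a restart perturbation whose error is controlled by $\alpha_n t_{\hit}^{(n)}(1-\delta,1-2\delta)$, and Lemma \ref{lemHitIneq} lets me pass from this hitting-time quantity back to the mixing time $t_{\mix}^{(n)}$ modulo a correction proportional to the relaxation time $t_{\rel}^{(n)}$. Applying Lemma \ref{lemLowerWeaker} with the given $\delta$, it suffices to show that $\liminf_n \alpha_n t_{\hit}^{(n)}(1-\delta,1-2\delta) \geq c$ (interpreted as divergence to $+\infty$ when $c = \infty$); substituting into the bound of Lemma \ref{lemLowerWeaker} then yields \eqref{lemLowerWeakerInf} or \eqref{lemLowerWeakerC}.

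To obtain this lower bound, I invoke Lemma \ref{lemHitIneq} with $\eta_1 = 1-2\delta$, $\eta_3 = \delta$, and $\eta_2 = \delta$, so that $(\eta_1+\eta_2) \wedge 1 = 1-\delta$ and the logarithmic factor reduces to the $\delta$-dependent constant $C_\delta = \tfrac{1}{2}\log(8/(1-2\delta))$. After multiplying through by $\alpha_n$ and using $\lceil x \rceil \leq x + 1$, this gives
\[ \alpha_n t_{\hit}^{(n)}(1-\delta,1-2\delta) \geq \alpha_n t_{\mix}^{(n)}(1-\delta) - C_\delta \alpha_n t_{\rel}^{(n)} - \alpha_n, \]
and the proof reduces to controlling the three terms on the right. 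For the first, pre-cutoff combined with the monotonicity \eqref{eqMixMonotone} implies $t_{\mix}^{(n)}(1-\delta) = \Theta(t_{\mix}^{(n)}(\epsilon))$: picking any auxiliary $\epsilon' \in (0, \min(\epsilon,\delta,1/2))$ and chaining monotonicity with pre-cutoff yields $t_{\mix}^{(n)}(1-\delta) \geq t_{\mix}^{(n)}(1-\epsilon') \geq t_{\mix}^{(n)}(\epsilon')/K(\epsilon') \geq t_{\mix}^{(n)}(\epsilon)/K(\epsilon')$. Hence $\alpha_n t_{\mix}^{(n)}(1-\delta) \to \infty$ in the $c=\infty$ regime; under the stronger cutoff hypothesis used for $c \in (0,\infty)$, all mixing times are asymptotically equivalent, so $\alpha_n t_{\mix}^{(n)}(1-\delta) \to c$. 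For the second term I invoke the known fact, cited in Section \ref{secIntro}, that pre-cutoff implies $t_{\rel}^{(n)} = o(t_{\mix}^{(n)})$; combined with the first step this gives $\alpha_n t_{\rel}^{(n)} = o(\alpha_n t_{\mix}^{(n)}(1-\delta))$. For the third, pre-cutoff forces $t_{\mix}^{(n)} \to \infty$ (since $t_{\rel}^{(n)} \geq 1$), so $\alpha_n \to 0$ when $c < \infty$, while $\alpha_n \leq 1$ is dominated by the divergent first term when $c = \infty$. Assembling these three facts gives $\liminf_n \alpha_n t_{\hit}^{(n)}(1-\delta,1-2\delta) \geq c$, as required.

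The main obstacle is this bookkeeping. Pre-cutoff \eqref{eqDefnPreCutoff} directly compares $t_{\mix}^{(n)}(\epsilon')$ only with $t_{\mix}^{(n)}(1-\epsilon')$ for a common $\epsilon'$, whereas the analysis above needs comparisons across different values of $\epsilon'$ (one is $\epsilon$, the other is $\delta$) and between $t_{\rel}^{(n)}$ and $t_{\mix}^{(n)}(1-\delta)$ rather than just $t_{\mix}^{(n)}$; both are resolved by the chaining argument just described. A related subtlety is that $\alpha_n$ is not assumed a priori to vanish, but $\alpha_n t_{\mix}^{(n)}(\epsilon) \to c < \infty$ together with $t_{\mix}^{(n)} \to \infty$ forces $\alpha_n \to 0$, which is enough to kill the harmless additive $\alpha_n$ term in the displayed inequality.
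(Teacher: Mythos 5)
Your proof is correct and follows essentially the same route as the paper: apply Lemma \ref{lemLowerWeaker} to produce a restart perturbation controlled by $\alpha_n t_{\hit}^{(n)}(1-\delta,1-2\delta)$, invoke Lemma \ref{lemHitIneq} with the identical parameter choice $\eta_1 = 1-2\delta$, $\eta_2 = \eta_3 = \delta$ (producing the same $\log(8/(1-2\delta))$ constant), and finish by using pre-cutoff (resp.\ cutoff) and $t_{\rel}^{(n)} = o(t_{\mix}^{(n)})$ to pass to the limit. The one small departure is that you handle the mismatch between the hypothesis parameter $\epsilon$ and the corollary's $\delta$ explicitly via a chaining argument through an auxiliary $\epsilon'$, whereas the paper's proof tacitly works as if $\epsilon = \delta$; your version is a touch more careful but rests on the same facts.
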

\begin{proof}
We first prove \eqref{lemLowerWeakerInf}. Note the assumptions of Lemma \ref{lemLowerWeaker} are satisfied for each $n \in \N$, so we can find $\{ \tilde{P}_n \}_{n \in \N}$ with $\tilde{P}_n \in B(P_n,\alpha_n)$ for each $n$, and
\begin{align}
& \liminf_{n \rightarrow \infty} \| \pi_n - \tilde{\pi}_n \| \\
& \quad \geq 1 - 3 \delta - \exp \left( - \liminf_{n \rightarrow \infty}  \alpha_n t_{\mix}^{(n)}(\delta) \left( \frac{t_{\mix}^{(n)}(1-\delta)}{t_{\mix}^{(n)}(\delta)} - \ceil*{ \frac{t_{\rel}^{(n)}}{2 t_{\mix}^{(n)}(\delta)} \log \left( \frac{ 8 }{ 1 - 2 \delta} \right) } \right) \right) .
\end{align}
Now when pre-cutoff holds, $t_{\mix}^{(n)}(1-\delta)  / t_{\mix}^{(n)}(\delta)$ is lower bounded by a positive constant as $n \rightarrow \infty$ (by definition) and $t_{\rel}^{(n)} = o ( t_{\mix}^{(n)}(\delta) )$ (by Lemma \ref{lemBasicMixing} in Appendix \ref{appExisting}). Thus, if $\alpha_n t_{\mix}^{(n)}(\delta) \rightarrow \infty$, we obtain
\begin{equation}
\lim_{n \rightarrow \infty}  \alpha_n t_{\mix}^{(n)}(\delta) \left( \frac{t_{\mix}^{(n)}(1-\delta)}{t_{\mix}^{(n)}(\delta)} - \ceil*{ \frac{t_{\rel}^{(n)}}{2 t_{\mix}^{(n)}(\delta)} \log \left( \frac{ 8 }{ 1 - 2 \delta} \right) } \right) = \infty ,
\end{equation}
and combining the previous two lines yields \eqref{lemLowerWeakerInf}.

The proof of \eqref{lemLowerWeakerC} is nearly identical, except $t_{\mix}^{(n)}(1-\delta)  / t_{\mix}^{(n)}(\delta) \rightarrow 1$ when cutoff holds, so if $\alpha_n t_{\mix}^{(n)}(\delta) \rightarrow c \in (0,\infty)$, the limit in the previous equation equals $c$.
\end{proof}

The bounds in Corollary \ref{corLowerWithDeltas} match the lower bounds of Theorem \ref{thmAllRegimes} except for the $3 \delta$ terms. However, these are largely ``nuisance terms'': roughly speaking, when cutoff holds, all $\delta$-mixing times are equivalent, so one can apply Corollary \ref{corLowerWithDeltas} with arbitrarily small $\delta$ ensure these nuisance terms vanish. This is formalized in the following corollary. The proof is delicate but uses elementary tools so is deferred to the appendix.

\begin{corollary} \label{corLowerWithoutDeltas}
Let $P_n \in \mathcal{E}_n, \alpha_n \in (0,1)\ \forall\ n \in \N$, and let $\epsilon \in (0,1)$ be independent of $n$. Assume each $P_n$ is lazy and reversible and $\lim_{n \rightarrow \infty} \alpha_n t_{\mix}^{(n)}(\epsilon) = c \in (0,\infty]$.
\begin{itemize}
\item If $c = \infty$ and $\{ P_n \}_{n \in \N}$ has pre-cutoff, then $\exists\ \{ \tilde{P}_n \}_{n \in \N}$ s.t.\ $\tilde{P}_n \in B(P_n,\alpha_n)\ \forall\ n \in \N$ and
\begin{equation}\label{lemLowerStongerInf}
\lim_{n \rightarrow \infty} \| \pi_n - \tilde{\pi}_n \| = 1 .
\end{equation}
\item If $c \in (0,\infty)$ and $\{ P_n \}_{n \in \N}$ has cutoff, then $\exists\ \{ \tilde{P}_n \}_{n \in \N}$ s.t.\ $\tilde{P}_n \in B(P_n,\alpha_n)\ \forall\ n \in \N$ and
\begin{equation}\label{lemLowerStongerC}
\liminf_{n \rightarrow \infty} \| \pi_n - \tilde{\pi}_n \| \geq 1 - e^{-c} .
\end{equation}
\end{itemize}
\end{corollary}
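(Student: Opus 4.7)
\textbf{Plan for Corollary \ref{corLowerWithoutDeltas}.} The strategy is to apply Corollary \ref{corLowerWithDeltas} repeatedly with a sequence of vanishing ``nuisance parameters'' $\delta_k \downarrow 0$, and to combine the resulting perturbation sequences into a single sequence via a diagonal/piecewise construction.

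The first step is to verify that Corollary \ref{corLowerWithDeltas} applies with $\delta$ replaced by an arbitrary $\delta_k \in (0, \epsilon \wedge 1/2)$. For the pre-cutoff case, monotonicity \eqref{eqMixMonotone} gives $t_{\mix}^{(n)}(\delta_k) \geq t_{\mix}^{(n)}(\epsilon)$ whenever $\delta_k \leq \epsilon$, so $\alpha_n t_{\mix}^{(n)}(\epsilon) \to \infty$ forces $\alpha_n t_{\mix}^{(n)}(\delta_k) \to \infty$. For the cutoff case, cutoff implies $t_{\mix}^{(n)}(\delta_k)/t_{\mix}^{(n)}(\epsilon) \to 1$ for any $\delta_k \in (0,1)$ (this follows from \eqref{eqDefnCutoff}, possibly combined with standard submultiplicativity for thresholds outside $(0,1/2)$), and therefore $\alpha_n t_{\mix}^{(n)}(\delta_k) \to c$. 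In either case, applying Corollary \ref{corLowerWithDeltas} with the parameter $\delta_k$ produces a sequence $\{\tilde{P}_n^{(k)}\}_{n \in \N}$ with $\tilde{P}_n^{(k)} \in B(P_n, \alpha_n)$ and stationary distributions $\tilde{\pi}_n^{(k)}$ satisfying
\begin{equation}
\liminf_{n \to \infty} \| \pi_n - \tilde{\pi}_n^{(k)} \| \geq 1 - 3 \delta_k \ \text{(pre-cutoff)}, \quad \text{resp.}\ \geq 1 - 3 \delta_k - e^{-c} \ \text{(cutoff)}.
\end{equation}

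Next, take $\delta_k = 1/k$ for $k \geq k_0$ with $k_0$ large enough that $1/k_0 < \epsilon \wedge 1/2$. By the definition of $\liminf$, we can select a strictly increasing sequence $N_{k_0} < N_{k_0 + 1} < \cdots$ such that for all $n \geq N_k$, $\| \pi_n - \tilde{\pi}_n^{(k)} \| \geq 1 - 4/k$ in the pre-cutoff case (respectively $\geq 1 - 4/k - e^{-c}$ in the cutoff case). Now define the combined sequence $\tilde{P}_n$ by setting $\tilde{P}_n := \tilde{P}_n^{(k)}$ for $N_k \leq n < N_{k+1}$, and $\tilde{P}_n := P_n$ (say) for $n < N_{k_0}$. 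Each $\tilde{P}_n$ lies in $B(P_n, \alpha_n)$ because each $\tilde{P}_n^{(k)}$ does. Given any $K \geq k_0$, for all $n \geq N_K$ we have $n \in [N_k, N_{k+1})$ for some $k \geq K$, so $\| \pi_n - \tilde{\pi}_n \| \geq 1 - 4/k \geq 1 - 4/K$ (resp.\ $\geq 1 - 4/K - e^{-c}$); thus $\liminf_n \| \pi_n - \tilde{\pi}_n \| \geq 1 - 4/K$ (resp.\ $\geq 1 - 4/K - e^{-c}$). Letting $K \to \infty$ and using $\| \cdot \| \leq 1$ yields \eqref{lemLowerStongerInf} and \eqref{lemLowerStongerC} respectively.

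The main obstacle is a bookkeeping issue rather than a conceptual one: making sure that replacing the Corollary \ref{corLowerWithDeltas} parameter by $\delta_k$ preserves its asymptotic hypothesis $\alpha_n t_{\mix}^{(n)}(\delta_k) \to c$ (or $\infty$), which is precisely where the (pre-)cutoff assumption enters. Once this is handled, the piecewise construction is standard. Note that the perturbations may vary wildly with $n$ (different restart distributions for different blocks of indices), which is harmless because we only require membership in $B(P_n, \alpha_n)$ for each $n$ individually; no uniformity across $n$ is asked of the construction.
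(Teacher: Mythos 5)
Your proposal is correct and follows essentially the same route as the paper: invoke Corollary \ref{corLowerWithDeltas} with a vanishing sequence $\delta_k$, verify its hypothesis at $\delta_k$ via monotonicity \eqref{eqMixMonotone} plus the (pre-)cutoff assumption, and then stitch the resulting perturbation sequences together over blocks of indices. Your use of strictly increasing thresholds $N_k$ is a mild simplification of the paper's construction (which instead splits into the cases $N_k \to \infty$ and $N_k$ bounded), and the only spot where you are terser than the paper is the claim that $t_{\mix}^{(n)}(\delta_k)/t_{\mix}^{(n)}(\epsilon) \to 1$ under cutoff when $\epsilon \geq 1/2$ — this needs (and admits) a short case analysis on the relative positions of $\delta_k$, $\epsilon$, and $1-\epsilon$ using only \eqref{eqMixMonotone} and \eqref{eqDefnCutoff}, with no submultiplicativity required.
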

\begin{proof}
See Appendix \ref{appProofLowerWithoutDeltas}.
\end{proof}

\section{Proof of Theorem \ref{thmEquivalence}} \label{secProofEquivalence}

For convenience, we restate a definition from Section \ref{secResults}: a sequence $\{ \alpha_{n,\epsilon} \}_{n \in \N, \epsilon \in (0,1/2)} \subset (0,1)$ \textit{coincides with} $\{ t_{\mix}^{(n)}(\epsilon) \}_{n \in \N, \epsilon \in (0,1)}$ if
\begin{gather} \label{eqDefnCoincidesRestate}
\sup_{\epsilon \in (0,1/2)} \liminf_{n \rightarrow \infty} \alpha_{n,\epsilon} t_{\mix}^{(n)} (\epsilon) = \infty , \\ \frac{ \alpha_{n,\epsilon} }{ \alpha_{n,\delta} } \in \left[ \frac{ t_{\mix}^{(n)}(1-\delta) }{ t_{\mix}^{(n)}(1-\epsilon) } , 1 \right]\ \forall\ \epsilon, \delta \in (0,1/2) \textrm{ s.t.\ } \epsilon \geq \delta, \forall\ n \in \N .
\end{gather}

Note that such sequences always exist (at least in the case of laziness). In particular, if $c \in (0,1)$ is independent of $n$ and $\epsilon$, and if $\alpha_{n,\epsilon} = c\ \forall\ n \in \N, \epsilon \in (0,1/2)$, then $\{ \alpha_{n,\epsilon} \}_{n \in \N, \epsilon \in (0,1/2)}$ satisfies \eqref{eqDefnCoincidesRestate}. To see why, note that the first condition in \eqref{eqDefnCoincidesRestate} follows immediately from Lemma \ref{lemBasicMixing} in Appendix \ref{appExisting} (a basic result for mixing times); for the second condition, the upper bound is clearly satisfied; also, the interval in \eqref{eqDefnCoincidesRestate} is nonempty by \eqref{eqMixMonotone}. 

Next, we prove the following property that was discussed in Section \ref{secResults}.
\begin{lemma} \label{lemCoincidesProperty}
If pre-cutoff holds and $\{ \alpha_{n,\epsilon} \}_{n \in \N, \epsilon \in (0,1/2)} \subset (0,1)$ coincides with the mixing times $\{ t_{\mix}^{(n)}(\epsilon) \}_{n \in \N, \epsilon \in (0,1)}$, then $\forall\ \epsilon \in (0,1/2)$, $\lim_{n \rightarrow \infty} \alpha_{n,\epsilon} t_{\mix}^{(n)} (\epsilon) = \infty$.
\end{lemma}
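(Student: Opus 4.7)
The plan is to fix $\epsilon \in (0,1/2)$ and show $\liminf_{n \to \infty} \alpha_{n,\epsilon} t_{\mix}^{(n)}(\epsilon) \geq M$ for every $M > 0$. Given $M$, the supremum condition in \eqref{eqDefnCoincidesRestate} produces some $\delta^* \in (0,1/2)$ with $\liminf_n \alpha_{n,\delta^*} t_{\mix}^{(n)}(\delta^*) > M$. The rest of the argument transfers this lower bound from $\delta^*$ to $\epsilon$ via the ratio condition in \eqref{eqDefnCoincidesRestate}, splitting into the cases $\delta^* \geq \epsilon$ and $\delta^* < \epsilon$.

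In the easy case $\delta^* \geq \epsilon$, I would apply the ratio condition with the pair $(\delta^*, \epsilon)$ in place of $(\epsilon,\delta)$ to get $\alpha_{n,\delta^*}/\alpha_{n,\epsilon} \leq 1$, i.e.\ $\alpha_{n,\epsilon} \geq \alpha_{n,\delta^*}$. Combined with the monotonicity \eqref{eqMixMonotone}, which gives $t_{\mix}^{(n)}(\epsilon) \geq t_{\mix}^{(n)}(\delta^*)$, this yields $\alpha_{n,\epsilon} t_{\mix}^{(n)}(\epsilon) \geq \alpha_{n,\delta^*} t_{\mix}^{(n)}(\delta^*)$, whose liminf is $> M$ by choice of $\delta^*$. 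In the case $\delta^* < \epsilon$, I would instead apply the lower bound in the ratio condition (now with $\delta = \delta^*$) to get $\alpha_{n,\epsilon} \geq \alpha_{n,\delta^*} \cdot t_{\mix}^{(n)}(1-\delta^*)/t_{\mix}^{(n)}(1-\epsilon)$; multiplying by $t_{\mix}^{(n)}(\epsilon)$ and rearranging produces
\begin{equation}
\alpha_{n,\epsilon} t_{\mix}^{(n)}(\epsilon) \;\geq\; \alpha_{n,\delta^*} t_{\mix}^{(n)}(\delta^*) \cdot \frac{t_{\mix}^{(n)}(1-\delta^*)}{t_{\mix}^{(n)}(\delta^*)} \cdot \frac{t_{\mix}^{(n)}(\epsilon)}{t_{\mix}^{(n)}(1-\epsilon)}.
\end{equation}
The last factor is at least $1$ by \eqref{eqMixMonotone} (since $\epsilon < 1/2 < 1-\epsilon$), and the pre-cutoff hypothesis \eqref{eqDefnPreCutoff} forces $\liminf_n t_{\mix}^{(n)}(1-\delta^*)/t_{\mix}^{(n)}(\delta^*) \geq 1/A$, where $A := \sup_{\delta \in (0,1/2)} \limsup_n t_{\mix}^{(n)}(\delta)/t_{\mix}^{(n)}(1-\delta) \in [1,\infty)$. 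Taking the liminf therefore gives $\liminf_n \alpha_{n,\epsilon} t_{\mix}^{(n)}(\epsilon) \geq M/A$, and since $M$ was arbitrary this liminf is $+\infty$, which (together with nonnegativity) proves the claim.

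The argument is essentially bookkeeping rather than requiring a new idea; the only delicate point is that pre-cutoff enters exclusively in the case $\delta^* < \epsilon$ to keep the factor $t_{\mix}^{(n)}(1-\delta^*)/t_{\mix}^{(n)}(\delta^*)$ bounded away from zero in the limit. Without pre-cutoff this factor could vanish, the transfer from $\delta^*$ to $\epsilon$ would collapse, and indeed the lemma would fail — consistent with the broader role pre-cutoff plays throughout the paper.
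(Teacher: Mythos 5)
Your proposal is correct and follows essentially the same route as the paper's proof: both transfer the lower bound from a well-chosen $\delta$ to $\epsilon$ by splitting on $\delta \lessgtr \epsilon$, using the upper endpoint of the ratio interval together with \eqref{eqMixMonotone} in one case and the lower endpoint in the other, with pre-cutoff keeping the factor $t_{\mix}^{(n)}(1-\delta)/t_{\mix}^{(n)}(\delta)$ bounded below. The only difference is organizational (you fix an arbitrary $M$ and a corresponding $\delta^*$, whereas the paper derives a uniform bound over all $\delta$ and takes the supremum at the end), which does not change the substance.
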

\begin{proof}
Let $\epsilon \in (0,1/2)$; we aim to show $\alpha_{n,\epsilon} t_{\mix}^{(n)} (\epsilon) \rightarrow \infty$. Fix $n \in \N$. Then $\forall\ \delta \in (0,\epsilon]$,
\begin{equation} \label{eqCoinPropLB1}
\alpha_{n,\epsilon} t_{\mix}^{(n)} (\epsilon) \geq \alpha_{n,\epsilon} t_{\mix}^{(n)} (1-\epsilon) \geq \alpha_{n,\delta} t_{\mix}^{(n)} (1-\delta)  = \alpha_{n,\delta} t_{\mix}^{(n)} (\delta) \frac{ t_{\mix}^{(n)} (1-\delta) }{ t_{\mix}^{(n)} (\delta) } ,
\end{equation}
where the first inequality holds by \eqref{eqMixMonotone} (since $\epsilon < 1/2$), and the second holds by the lower bound of the interval in \eqref{eqDefnCoincidesRestate}. On the other hand, $\forall\ \delta \in [\epsilon,1/2)$,
\begin{equation} \label{eqCoinPropLB2}
\alpha_{n,\epsilon} t_{\mix}^{(n)} (\epsilon) \geq \alpha_{n,\delta} t_{\mix}^{(n)} (\delta) \geq \alpha_{n,\delta} t_{\mix}^{(n)} (\delta) \frac{ t_{\mix}^{(n)} (1-\delta) }{ t_{\mix}^{(n)} (\delta) } ,
\end{equation}
where the first inequality holds by the upper bound of the interval in \eqref{eqDefnCoincidesRestate} and by \eqref{eqMixMonotone}, and the second holds by \eqref{eqMixMonotone} (since $\delta < 1/2$). Now since $n \in \N$ was arbitrary, \eqref{eqCoinPropLB1} and \eqref{eqCoinPropLB2} imply
\begin{equation}
\liminf_{n \rightarrow \infty} \alpha_{n,\epsilon} t_{\mix}^{(n)} (\epsilon) \geq \liminf_{n \rightarrow \infty} \alpha_{n,\delta} t_{\mix}^{(n)} (\delta) \frac{ t_{\mix}^{(n)} (1-\delta) }{ t_{\mix}^{(n)} (\delta) }\ \forall\ \delta \in (0,1/2) .
\end{equation}
Also, by definition of $\liminf$ and pre-cutoff, $\exists\ K > 0$ independent of $n,\delta$ such that $\forall\ \delta \in (0,1/2)$,
\begin{align}
\liminf_{n \rightarrow \infty} \alpha_{n,\delta} t_{\mix}^{(n)} (\delta) \frac{ t_{\mix}^{(n)} (1-\delta) }{ t_{\mix}^{(n)} (\delta) } & \geq   \liminf_{n \rightarrow \infty} \frac{ t_{\mix}^{(n)} (1-\delta) }{ t_{\mix}^{(n)} (\delta) }   \liminf_{n \rightarrow \infty} \alpha_{n,\delta} t_{\mix}^{(n)} (\delta)   
\\
& \geq K \liminf_{n \rightarrow \infty} \alpha_{n,\delta} t_{\mix}^{(n)} (\delta) . 
\end{align}
Combining the previous two bounds, and since these bounds hold $\forall\ \delta \in (0,1/2)$,
\begin{equation}
\liminf_{n \rightarrow \infty} \alpha_{n,\epsilon} t_{\mix}^{(n)} (\epsilon) \geq K \sup_{\delta \in (0,1/2) } \liminf_{n \rightarrow \infty} \alpha_{n,\delta} t_{\mix}^{(n)} (\delta) = \infty ,
\end{equation}
where the equality holds by \eqref{eqDefnCoincidesRestate}.
\end{proof}

We turn to the proof of the theorem. First, we show pre-cutoff implies Condition \ref{condSens}. For this, let $\{ \alpha_{n,\epsilon} \}_{n \in \N, \epsilon \in (0,1/2)} \subset (0,1)$ coincide with $\{ t_{\mix}^{(n)}(\epsilon) \}_{n \in \N, \epsilon \in (0,1)}$, and fix $\epsilon \in (0,1/2)$. Lemma \ref{lemCoincidesProperty} ensures $\alpha_{n,\epsilon} t_{\mix}^{(n)}(\epsilon) \rightarrow \infty$; hence, by Corollary \ref{corLowerWithoutDeltas}, $\exists\ \{ \sigma_{n,\epsilon} \}_{n \in \N}$ s.t.
\begin{equation}
\sigma_{n,\epsilon} \in \Delta_{n-1}\ \forall\ n \in \N,  \quad  \lim_{n \rightarrow \infty} \| \pi_n - \pi_{\alpha_{n,\epsilon},\sigma_{n,\epsilon}} \| = 1 .
\end{equation}
Next, assume \eqref{eqPrecutFailNice} holds. Let $\alpha_{n,\epsilon} = 1 / ( 2 t_{\mix}^{(n)}(1-\epsilon) )\ \forall\ n \in \N, \epsilon \in (0,1/2)$. Then
\begin{equation}
\frac{\alpha_{n,\epsilon}}{\alpha_{n,\delta}} = \frac{ t_{\mix}^{(n)}(1-\delta) }{ t_{\mix}^{(n)}(1-\epsilon) }\ \forall\ \epsilon, \delta \in (0,1/2) .
\end{equation}
Furthermore, since \eqref{eqPrecutFailNice} holds by assumption,
\begin{equation}
\sup_{\epsilon \in (0,1/2)} \liminf_{n \rightarrow \infty} \alpha_{n,\epsilon} t_{\mix}^{(n)}(\epsilon) = \frac{1}{2} \sup_{\epsilon \in (0,1/2)} \liminf_{n \rightarrow \infty} \frac{ t_{\mix}^{(n)} (\epsilon) }{ t_{\mix}^{(n)} (1-\epsilon) } = \infty . 
\end{equation}
The previous two lines show that $\{ \alpha_{n,\epsilon} \}_{n \in \N, \epsilon \in (0,1/2)}$ coincides with $\{ t_{\mix}^{(n)}(\epsilon) \}_{n \in \N, \epsilon \in (0,1)}$. Fixing $\epsilon \in (0,1/2)$ and $\{ \sigma_{n,\epsilon} \}_{n \in \N}$ s.t.\ $\sigma_{n,\epsilon} \in \Delta_{n-1}\ \forall\ n \in \N$, we will show
\begin{equation}
\| \pi_n  - \pi_{\alpha_{n,\epsilon},\sigma_{n,\epsilon}} \| \leq 1 - \frac{\epsilon}{2} ,
\end{equation}
which will imply that Condition \ref{condSens} fails. Toward this end, we first use Lemma \ref{lemPropRestart} (a well-known formula for the stationary distribution of a restart perturbation) to write
\begin{equation}
\pi_n  - \pi_{\alpha_{n,\epsilon},\sigma_{n,\epsilon}} = \alpha_{n,\epsilon} \sum_{t=0}^{\infty} (1-\alpha_{n,\epsilon})^t ( \pi_n - \sigma_{n , \epsilon} P_n^t ) .
\end{equation}
Hence, by convexity and definition of the $t$-step distance from stationarity \eqref{eqDefnDnT},
\begin{equation}
\| \pi_n  - \pi_{\alpha_{n,\epsilon},\sigma_{n,\epsilon}}  \| \leq \alpha_{n,\epsilon} \sum_{t=0}^{\infty} (1-\alpha_{n,\epsilon})^t \| \pi_n - \sigma_{n , \epsilon} P_n^t \| \leq \alpha_{n,\epsilon} \sum_{t=0}^{\infty} (1-\alpha_{n,\epsilon})^t d_n(t) .
\end{equation}
We thus obtain
\begin{align}
\| \pi_n  - \pi_{\alpha_{n,\epsilon},\sigma_{n,\epsilon}} \| & \leq \alpha_{n,\epsilon} \sum_{t=0}^{t_{\mix}^{(n)}(1-\epsilon)-1} (1-\alpha_{n,\epsilon})^t d_n(t) + \alpha_{n,\epsilon} \sum_{t=t_{\mix}^{(n)}(1-\epsilon)}^{\infty} (1-\alpha_{n,\epsilon})^t d_n(t) \\
& \leq \alpha_{n,\epsilon} \sum_{t=0}^{t_{\mix}^{(n)}(1-\epsilon)-1} (1-\alpha_{n,\epsilon})^t + \alpha_{n,\epsilon} \sum_{t=t_{\mix}^{(n)}(1-\epsilon)}^{\infty} (1-\alpha_{n,\epsilon})^t (1-\epsilon) \\
& = 1 - \epsilon (1-\alpha_{n,\epsilon})^{ t_{\mix}^{(n)}(1-\epsilon) } = 1 - \epsilon \left( 1 - \frac{1/2}{ t_{\mix}^{(n)}(1-\epsilon) } \right)^{ t_{\mix}^{(n)}(1-\epsilon) } \leq 1 - \frac{\epsilon}{2} ,
\end{align}
where the final inequality is Bernoulli's.

\section{Mixing for complete graph bijection} \label{secProofExampleMain}

In this section, we show $t_{\mix}^{(n)} = O(n)$ for the CGB defined in \eqref{eqCgbTrans}. We begin with some definitions. First, we let $N(i)$ denote the neighbors of $i \in [n]$ in the underlying graph, i.e.\
\begin{equation} \label{eqNeighbors}
N(i) = \begin{cases} \{ 1 , \ldots, i-1, i+1, \ldots , \frac{n}{2} , i+\frac{n}{2} \} , & n \textrm{ even}, i \leq \frac{n}{2} \\ \{ i-\frac{n}{2},1+\frac{n}{2},\ldots,i-1,i+1,\ldots,n \}   , & n \textrm{ even} , i > \frac{n}{2} \\ \{ 1 , \ldots , i-1,i+1,\ldots, \frac{n-1}{2} , i+\frac{n-1}{2}, n \} , & n \textrm{ odd}, i \leq \frac{n-1}{2} \\ \{ i-\frac{n-1}{2}, 1+\frac{n-1}{2} , \ldots , i-1,i+1,\ldots, n \}  , & n \textrm{ odd} , \frac{n-1}{2} < i < n \\ \{ 1 , \ldots , n-1 \} , & n \textrm{ odd} , i = n \end{cases} .
\end{equation}
As an example, for the $n = 6$ graph in Figure \ref{fig_ex_cgb}, we have
\begin{gather}
N(1) = \{2,3,4\}, \quad N(2) = \{1,3,5\} , \quad N(3) = \{1,2,6\}, \\
N(4) = \{1,5,6\}, \quad N(5) = \{2,4,6\}, \quad N(6) = \{3,4,5\} ,
\end{gather}
while for the $n= 7$ graph in the same figure, we have
\begin{gather}
N(1) = \{2,3,4,7\}, \quad N(2) = \{1,3,5,7\} , \quad N(3) = \{1,2,6,7\}, \\
N(4) = \{1,5,6,7\}, \quad N(5) = \{2,4,6,7\}, \quad N(6) = \{3,4,5,7\} ,\\
N(7) = \{1,2,3,4,5,6\} .
\end{gather}
For $n$ even, we will refer to the sets $\{1,\ldots,n/2\}$ and $\{1+n/2,\ldots,n\}$ as \textit{cliques} (since these sets form complete subgraphs); similarly, for $n$ odd, we will call the sets $\{1,\ldots,(n-1)/2\}$ and $\{1+(n-1)/2,\ldots,n-1\}$ cliques. For example, $\{1,2,3\}, \{4,5,6\}$ are cliques in Figure \ref{fig_ex_cgb}.

To show $t_{\mix}^{(n)} = O(n)$, we construct couplings. More specifically, by Lemmas \ref{lemBasicTV} and \ref{lemBasicMixing} in Appendix \ref{appExisting} (two elementary mixing time results), we aim to bound
\begin{equation} \label{eqExCouplingApproach}
d_n(t) \leq \max_{i,j \in [n]} \| e_i P_n^t - e_j P_n^t \| \leq \max_{i,j \in [n]} \P_{ij} ( X_n(t) \neq Y_n(t) ) ,
\end{equation}
where $\{ X_n(t) \}_{t \in \Z_+}$ and $\{ Y_n(t) \}_{t \in \Z_+}$, respectively, are Markov chains with transition matrix $P_n$ starting from $X_n(0) = i$ and $Y_n(0) = j$, respectively (as denoted by the subscript in $\P_{ij}$). We separately consider the cases where $n$ is odd and $n$ is even.

\subsection{The even case}

When $n$ is even, the basic approach is to first bring the two chains to the same clique, after which time they remain in the same clique forever. Once the chains are in the same clique, we bring them to the same state, after which time they remain in the same state forever. Concretely, given $X_n(t), Y_n(t)$, we assign $X_n(t+1), Y_n(t+1)$ as follows:
\begin{enumerate}[(A)]
\item \label{coupEvenSameState} If $X_n(t) \neq Y_n(t)$, proceed to \ref{coupEvenDiffClique}. Otherwise, let $X_n(t+1) \sim e_{X_n(t)} P_n$ and set $Y_n(t+1) = X_n(t+1)$ (i.e.\ run the chains together).
\item \label{coupEvenDiffClique} If $X_n(t), Y_n(t)$ are in the same clique, proceed to \ref{coupEvenSameClique}. Otherwise, flip an independent fair coin. If heads, sample $X_n(t+1)$ from $N(X_n(t))$ uniformly (i.e.\ move this chain) and set $Y_n(t+1) = Y_n(t)$ (i.e.\ keep this chain lazy). If tails, set $X_n(t+1) = X_n(t)$ (i.e.\ keep this chain lazy) and sample $Y_n(t+1)$ from $N(Y_n(t))$ uniformly (i.e.\ move this chain).
\item \label{coupEvenSameClique} Flip an independent fair coin. If heads, set $X_n(t+1) = X_n(t), Y_n(t+1) = Y_n(t)$ (i.e.\ keep both chains lazy). If tails, roll a three-sided die that lands $1$, $2$, and $3$ with probability $\frac{2}{n}$, $\frac{2}{n}$, and $1-\frac{4}{n}$, respectively, and proceed as follows:
\begin{itemize}
\item If $1$, define $X_n(t+1),Y_n(t+1)$ as follows (i.e.\ move to the other clique):
\begin{equation}
(X_n(t+1),Y_n(t+1)) = \begin{cases} (X_n(t)+n/2, Y_n(t)+n/2) , & X_n(t) \leq n/2 \\ (X_n(t)-n/2,Y_n(t)-n/2) , & X_n(t)  > n/2 \end{cases} .
\end{equation}
\item If $2$, set $X_n(t+1) = Y_n(t), Y_n(t+1) = X_n(t)$ (i.e.\ swap the chains).
\item If $3$, sample $X_n(t+1)$ uniformly from $N(X_n(t)) \setminus \{ Y_n(t) \}$, set $Y_n(t+1) = X_n(t+1)$.
\end{itemize}
\end{enumerate}
We note that in \ref{coupEvenDiffClique}, we move only one chain to ensure the chains do not switch cliques (meaning, e.g.\ $X_n(t) \in \{1,\ldots,n/2\}, Y_n(t) \in \{1+n/2,\ldots,n\}$ and $X_n(t+1) \in \{1+n/2,\ldots,n\}, Y_n(t+1) \in \{1,\ldots,n/2\}$), which prolongs the time for the chains to meet. We also note that when the die is $2$ or $3$ in \ref{coupEvenSameClique}, both chains move within the clique; by swapping the chains when the die is $2$, we can sample uniformly from the clique, excluding the states $X_n(t), Y_n(t)$, when the die is $3$.

To analyze this coupling, first suppose $X_n(0) = i, Y_n(0) = j$ for $i \neq j$ in the same clique. Then $X_n(t) \neq Y_n(t)$ implies that at each $\tau \in \{ 0,\ldots,t-1 \}$, one of the following occur:
\begin{itemize}
\item The coin in \ref{coupEvenSameClique} lands heads, so that both chains are lazy. This occurs with probability $1/2$.
\item The coin in \ref{coupEvenSameClique} lands tails and the die in \ref{coupEvenSameClique} lands $1$ or $2$, so that both chains move, but to different states. This occurs with probability $(1/2) \times (4/n) = 2/n$. 
\end{itemize}
By independence of these coin flips and die rolls, it follows that
\begin{equation} \label{eqCoupEvenSameStart}
\P_{ij} ( X_n(t) \neq Y_n(t) ) \leq \left( \frac{1}{2} + \frac{2}{n} \right)^t .
\end{equation}
Next, suppose $X_n(0) = i, Y_n(0) = j$ for $i \neq j$ in different cliques. Fix $t \in \N,  \tau \in \{1,\ldots,t\}$, and let $E_{\tau}$ denote the event that $X_n(\tau), Y_n(\tau)$ are in the same clique. Then by the Markov property,
\begin{align}
\P_{ij} ( X_n(t) \neq Y_n(t) ) & = \P ( X_n(t) \neq Y_n(t) | E_{\tau} ) \P ( E_{\tau} | X_n(0) = i, Y_n(0) = j ) \\
& \quad\quad + \P ( X_n(t) \neq Y_n(t) | E_{\tau}^C ) \P ( E_{\tau}^C | X_n(0) = i, Y_n(0) = j ) \\
& \leq \P ( X_n(t) \neq Y_n(t) | E_{\tau} ) + \P ( E_{\tau}^C | X_n(0) = i, Y_n(0) = j ) \label{eqCoupEvenDiffStartSummands}
\end{align}
For the first summand in \eqref{eqCoupEvenDiffStartSummands}, we can use the time invariance of the Markov chains and the fact that \eqref{eqCoupEvenSameStart} holds for any $i \neq j$ in the same clique to obtain
\begin{equation}
 \P ( X_n(t) \neq Y_n(t) | E_{\tau} ) = \P (  X_n(t-\tau) \neq Y_n(t-\tau) | E_0 ) \leq \left( \frac{1}{2} + \frac{2}{n} \right)^{t-\tau} 
\end{equation}
For the second summand  in \eqref{eqCoupEvenDiffStartSummands}, note that $E_{\tau}^C$ implies $X_n(\tau'), Y_n(\tau')$ are not in the same clique for each $\tau' \leq \tau$ (since once the chains reach the same clique, they remain in the same clique forever). This in turn implies that at each such $\tau'$, the chain that moves in \ref{coupEvenDiffClique} at step $\tau'$ moves within its current clique. Such moves occur with probability $1-2/n$. Thus, by independence,
\begin{equation}
\P ( E_{\tau}^C | X_n(0) = i, Y_n(0) = j ) \leq \left( 1 - \frac{2}{n} \right)^{\tau} \leq \exp \left( - \frac{ 2 \tau }{ n } \right) .
\end{equation}
To summarize, we have shown that if $X_n(0) = i, Y_n(0) = j$ for $i \neq j$ not in the same clique,
\begin{equation} \label{eqCoupEvenDiffStart}
\P_{ij} ( X_n(t) \neq Y_n(t) ) \leq  \left( \frac{1}{2} + \frac{2}{n} \right)^{t-\tau} + \exp \left( - \frac{ 2 \tau }{ n } \right) .
\end{equation}
Combining \eqref{eqCoupEvenSameStart} and \eqref{eqCoupEvenDiffStart}, we thus obtain for any $t \in \N,  \tau \in \{1,\ldots,t\}$,
\begin{equation} \label{eqCoupEvenAlmostDone}
\max_{i,j \in [n]} \P_{ij} ( X_n(t) \neq Y_n(t) ) \leq \max \left\{  \left( \frac{1}{2} + \frac{2}{n} \right)^t , \left( \frac{1}{2} + \frac{2}{n} \right)^{t-\tau} + \exp \left( - \frac{ 2 \tau }{ n } \right)  \right\} .
\end{equation}
Now it is straightforward to verify that if (for example)
\begin{equation}
n \geq 6 , \quad \tau \geq  \frac{n}{2} \log \left( \frac{2}{\epsilon} \right)  , \quad t \geq \tau + \frac{ \log(2/\epsilon) }{ \log(6/5) } \geq \frac{n}{2} \log \left( \frac{2}{\epsilon} \right) + \frac{ \log(2/\epsilon) }{ \log(6/5) }  ,
\end{equation}
then \eqref{eqCoupEvenAlmostDone} is further bounded by $\epsilon$. Hence, by \eqref{eqExCouplingApproach}, we obtain for some $a_{\epsilon}$ independent of $n$,
\begin{equation} \label{eqExCouplingNevenFinal}
t_{\mix}^{(n)}(\epsilon) \leq \frac{n}{2} \log \left( \frac{2}{\epsilon} \right) + \frac{ \log(2/\epsilon) }{ \log(6/5) } \leq a_{\epsilon} n\ \forall\ n \in \{ 6, 8,\ldots \} .
\end{equation}

\subsection{The odd case}

When $n$ is odd, we could use a similar approach (bring the chains to the same clique, then bring them to the same state), but the auxiliary state $n$ complicates this. Hence, we instead leverage this auxiliary state as follows: we wait until both chains leave state $n$ (if necessary); we then ensure that the next visits to $n$ occur simultaneously (after which point the chains run together indefinitely). More specifically, given $X_n(t), Y_n(t)$, we assign $X_n(t+1), Y_n(t+1)$ as follows:
\begin{enumerate}[(A),start=4]
\item \label{coupOddSameState} If $X_n(t) \neq Y_n(t)$, proceed to \ref{coupOddOneAtAux}. Otherwise, let $X_n(t+1) \sim e_{X_n(t)} P_n, Y_n(t+1) = X_n(t+1)$.
\item \label{coupOddOneAtAux} If $X_n(t) \neq n$ and $Y_n(t) \neq n$, proceed to \ref{coupOddNoneAtAux}. Otherwise, flip an independent fair coin. If heads, sample $X_n(t+1)$ from $N(X_n(t))$ uniformly and set $Y_n(t+1) = Y_n(t)$. If tails, set $X_n(t+1) = X_n(t)$ and sample $Y_n(t+1)$ from $N(Y_n(t))$ uniformly.
\item \label{coupOddNoneAtAux} Roll a die that lands $1$, $2$, and $3$ with probability $\frac{1}{2}$, $\frac{1}{2} - \frac{1}{n+1}$, and $\frac{1}{n+1}$, respectively.
\begin{itemize}
\item If $1$, set $X_n(t+1) = X_n(t), Y_n(t+1) = Y_n(t)$.
\item If $2$, independently and uniformly sample $X_n(t+1)$ and $Y_n(t+1)$ from $N(X_n(t)) \setminus \{n\}$ and $N(Y_n(t)) \setminus \{n\}$, respectively.
\item If $3$, set $X_n(t+1) = Y_n(t+1) = n$.
\end{itemize}
\end{enumerate}

To analyze this coupling, first suppose $X_n(0) = i, Y_n(0) = j$ for some $i,j \in [n] \setminus \{n\}$ s.t.\ $i \neq j$. Then $X_n(t) \neq Y_n(t)$ implies the following, for each $\tau \leq t$:
\begin{itemize}
\item $X_n(\tau) \neq Y_n(\tau)$. (This can be proven by contradiction. Namely, if $X_n(\tau) = Y_n(\tau)$, then $X_n(t) \neq Y_n(t)$ is violated, since the chains run together forever after meeting by \ref{coupOddSameState}.)
\item $X_n(\tau) \neq n, Y_n(\tau) \neq n$. (This can be proven inductively. For $\tau = 0$, it holds by assumption. For $\tau > 0$, we have $X_n(\tau-1) \neq Y_n(\tau-1)$ by the previous item and $X_n(\tau-1) \neq n, Y_n(\tau-1) \neq n$ by the inductive hypothesis. Hence, $X_n(\tau), Y_n(\tau)$ are assigned via \ref{coupOddNoneAtAux}. This implies $X_n(\tau) \neq n, Y_n(\tau) \neq n$, since otherwise $X_n(\tau) = Y_n(\tau) = n$ by \ref{coupOddNoneAtAux}.)
\end{itemize}
By the argument of the second item, we can also conclude that, if $X_n(t) \neq Y_n(t)$, then $X_n(\tau), Y_n(\tau)$ were assigned via \ref{coupOddNoneAtAux} for each $\tau \leq t$. Thus, at all such $\tau$, the die in \ref{coupOddNoneAtAux} must have landed $1$ or $2$ (else, $X_n(t) \neq Y_n(t)$ is violated); this occurs with probability $1 - \frac{1}{n+1}$. Hence, by independence,
\begin{equation} \label{eqCoupOddEasyCase}
\P_{ij} ( X_n(t) \neq Y_n(t) ) \leq \left( 1 - \frac{1}{n+1} \right)^t \leq \exp \left( - \frac{t}{n+1} \right)\ \forall\ i,j \in [n] \setminus \{n\}\ s.t.\ i \neq j .
\end{equation}
We next consider the case $X_n(0) = n$ or $Y_n(0) = n$; without loss of generality, assume $X_n(0) = n, Y_n(0) = j \neq n$. Let $\tau \leq t$ and $E_{\tau} = \{ X_n(\tau) \neq n, Y_n(\tau) \neq n \}$. Then
\begin{align}
& \P_{nj} ( X_n(t) \neq Y_n(t) ) \\
& \quad = \P ( X_n(t) \neq Y_n(t) , E_{\tau} | X_n(0) = n, Y_n(0) = j ) \\
& \quad\quad\quad + \P ( X_n(t) \neq Y_n(t) , E_{\tau}^C | X_n(0) = n, Y_n(0) = j ) \\
& \quad = \P ( X_n(t) \neq Y_n(t) | E_{\tau} ) \P ( E_{\tau} | X_n(0) = n, Y_n(0) = j ) \\
& \quad\quad\quad + \P ( X_n(t) \neq Y_n(t) , E_{\tau}^C | X_n(0) = n, Y_n(0) = j ) \\
& \quad \leq \P ( X_n(t) \neq Y_n(t) | E_{\tau} ) + \P (  X_n(t) \neq Y_n(t) , E_{\tau}^C | X_n(0) = n, Y_n(0) = j ) \label{eqCoupOddHardCaseSummands} ,
\end{align}
where the equalities use total probability and the Markov property, and the inequality is immediate. Now for the first summand in \eqref{eqCoupOddHardCaseSummands}, we can use time invariance and \eqref{eqCoupOddEasyCase} to obtain
\begin{equation} \label{eqCoupOddHardCaseSummand1}
\P ( X_n(t) \neq Y_n(t) | E_{\tau} ) = \P ( X_n(t-\tau) \neq Y_n(t-\tau) | E_{0} ) \leq \exp \left( - \frac{t-\tau}{n+1} \right) .
\end{equation}
For the second summand in \eqref{eqCoupOddHardCaseSummands}, we again use $X_n(t) \neq Y_n(t) \Rightarrow X_n(\tau) \neq Y_n(\tau)$ to obtain
\begin{align} \label{eqCoupOddHardCaseSummand2a}
& \P (  X_n(t) \neq Y_n(t) , E_{\tau}^C | X_n(0) = n, Y_n(0) = j )  \\
& \quad \leq \P ( X_n(\tau) \neq Y_n(\tau) , E_{\tau}^C  | X_n(0) = n, Y_n(0) = j )
\end{align}
We next claim (and will return to prove) that
\begin{equation} \label{eqCoupOddFinalClaim}
\{ X_n(\tau) \neq Y_n(\tau) , E_{\tau}^C \} | \{ X_n(0) = n, Y_n(0) = j \} \Rightarrow X_n(\tau') = n\ \forall\ \tau' \leq \tau ,
\end{equation}
i.e.\ conditioned on the event $\{ X_n(0) = n, Y_n(0) = j \}$, the event $\{ X_n(\tau) \neq Y_n(\tau) , E_{\tau}^C \}$ can only occur if the $X_n$-chain is lazy at every step up to $\tau$. In other words, we require the $\tau$ independent coin tosses at the first $\tau$ iterations of \ref{coupOddOneAtAux} to all land tails. Hence, we conclude
\begin{equation} \label{eqCoupOddHardCaseSummand2b}
\P ( X_n(\tau) \neq Y_n(\tau) , E_{\tau}^C  | X_n(0) = n, Y_n(0) = j ) \leq 2^{-\tau} .
\end{equation}
Combining \eqref{eqCoupOddEasyCase}, \eqref{eqCoupOddHardCaseSummands}, \eqref{eqCoupOddHardCaseSummand1}, \eqref{eqCoupOddHardCaseSummand2a}, and \eqref{eqCoupOddHardCaseSummand2b}, we have ultimately shown that for $n$ odd,
\begin{equation} \label{eqCoupOddAlmostDone} 
\max_{i,j \in [n]} \P_{ij} ( X_n(t) \neq Y_n(t) ) \leq \max \left\{ \exp \left( - \frac{t}{n+1} \right), 2^{-\tau} + \exp \left( - \frac{t-\tau}{n+1} \right) \right\} .
\end{equation}
Therefore, if we choose (for example)
\begin{equation} 
\tau \geq \log_2 \left( \frac{2}{\epsilon} \right) , \quad t \geq \tau + (n+1) \log \left( \frac{2}{\epsilon} \right) \geq (n+1) \log \left( \frac{2}{\epsilon} \right) +  \log_2 \left( \frac{2}{\epsilon} \right) ,
\end{equation}
we conclude \eqref{eqCoupOddAlmostDone} is further bounded by $\epsilon$. We thus obtain for some $b_{\epsilon}$ independent of $n$,
\begin{equation} \label{eqExCouplingNoddFinal}
t_{\mix}^{(n)}(\epsilon) \leq (n+1) \log \left( \frac{2}{\epsilon} \right) +  \log_2 \left( \frac{2}{\epsilon} \right) \leq b_{\epsilon} n\ \forall\ n \in \{1,3,\ldots\} .
\end{equation}
Finally, we can combine \eqref{eqExCouplingNevenFinal} and \eqref{eqExCouplingNoddFinal} to obtain for some $a_{\epsilon}, b_{\epsilon}$ independent of $n$,
\begin{equation}
t_{\mix}^{(n)}(\epsilon) \leq \max \{ a_{\epsilon}, b_{\epsilon} \} n\ \forall\ n \geq 6 \quad \Rightarrow \quad t_{\mix}^{(n)}(\epsilon) = O(n) .
\end{equation}
We have completed the proof of $t_{\mix}^{(n)}(\epsilon) = O(n)$, assuming \eqref{eqCoupOddFinalClaim} holds. We now return to prove \eqref{eqCoupOddFinalClaim}. Assume (for the sake of contradiction) that $X_n(\tau^*) = n, X_n(\tau^*+1) \neq n$ for some $\tau^* < \tau$. (i.e.\ the $X_n$-chain was non-lazy at some $\tau^* < \tau$). Then, by \ref{coupOddOneAtAux}, the $Y_n$-chain was lazy at time $\tau^*$, i.e.\ $Y_n(\tau^*) = Y_n(\tau^*+1)$. Now consider two cases:
\begin{enumerate}
\item $\tau^* = \tau-1$: By assumption, $X_n(\tau) = X_n(\tau^*+1) \neq n$. Furthermore, we must have $Y_n(\tau) \neq n$: if instead $Y_n(\tau) = n$, then $n = Y_n(\tau) = Y_n(\tau^*+1) = Y_n(\tau^*)$ (since $\tau^* = \tau-1$ and the $Y_n$-chain was lazy at $\tau^*$), which implies $X_n(\tau^*) = Y_n(\tau^*) = n$, which contradicts $X_n(\tau) \neq Y_n(\tau)$ in \eqref{eqCoupOddFinalClaim}. Hence, we must have $X_n(\tau) \neq n, Y_n(\tau) \neq n$. But this contradicts $E_{\tau}^C$ in \eqref{eqCoupOddFinalClaim}.
\item $\tau^* < \tau-1$: By a similar argument, we have $X_n(\tau^*+1) \neq n, Y_n(\tau^*+1) \neq n$ and $X_n(\tau^*+1) \neq Y_n(\tau^*+1)$. This implies $X_n(\tau^*+2), Y_n(\tau^*+2)$ were assigned via \ref{coupOddNoneAtAux}. In \ref{coupOddNoneAtAux}, the chains only move to $n$ if they move to $n$ together, after which point they remain together forever. Thus, neither chain can move to $n$ at time $\tau^*+2$, else $X_n(\tau) \neq Y_n(\tau)$ in \eqref{eqCoupOddFinalClaim} is contradicted. Repeating this argument for $\tau^*+3,\ldots,\tau$ then contradicts $E_{\tau}^C$ in \eqref{eqCoupOddFinalClaim}.
\end{enumerate}
Since both cases yield contradictions, \eqref{eqCoupOddFinalClaim} is proven.

\begin{appendix}

\section{Basic results} \label{appExisting}

First, we recall some basic properties of total variation distance.
\begin{lemma} \label{lemBasicTV}
Let $\mu, \nu, \eta \in \Delta_{n-1}$. Then the following hold:
\begin{itemize}
\item ($l_1$ equivalence) $\| \mu - \nu \| = \frac{1}{2} \sum_{i=1}^n | \mu(i) - \nu(i) | = \frac{1}{2} \| \mu - \nu \|_1$.
\item (Triangle inequality) $\| \mu - \nu \| \leq \| \mu - \eta \| + \| \eta - \nu \|$.
\item (Convexity) $\| ( \gamma \mu + (1-\gamma) \nu )  - \eta \| \leq \gamma \| \mu - \eta \| + (1-\gamma) \| \nu - \eta \|\ \forall\ \gamma \in (0,1)$.
\item (Coupling) $\| \mu - \nu \| \leq \P ( X \neq Y )$ for any coupling $(X,Y)$ of $(\mu,\nu)$, i.e.\ for any pair of random variables $X$ and $Y$ with respective marginal distributions $\mu$ and $\nu$. Moreover, there exists a coupling $(X,Y)$ of $(\mu,\nu)$ such that $\| \mu - \nu \| = \P ( X \neq Y )$.
\end{itemize}
\end{lemma}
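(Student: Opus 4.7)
The plan is to establish the $l_1$ identity first, then derive the triangle and convexity properties as immediate consequences, and finally prove the coupling characterization by a standard pointwise argument for the inequality together with an explicit construction for equality.

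For the $l_1$ identity, I would let $A^* = \{i \in [n] : \mu(i) > \nu(i)\}$ and use $\mu([n]) = \nu([n]) = 1$ to observe that $\sum_{i \in A^*}(\mu(i) - \nu(i)) = \sum_{i \notin A^*}(\nu(i) - \mu(i))$, so each of these sums equals $\tfrac{1}{2}\sum_i |\mu(i) - \nu(i)|$. Then for any $A \subset [n]$, splitting $A$ into $A \cap A^*$ and $A \setminus A^*$ and using the signs of $\mu - \nu$ on each piece shows $\mu(A) - \nu(A) \leq \mu(A^*) - \nu(A^*)$, and symmetrically $\nu(A) - \mu(A) \leq \nu((A^*)^c) - \mu((A^*)^c)$; both upper bounds equal $\tfrac12 \|\mu - \nu\|_1$, giving the identity. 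The triangle inequality and convexity then follow trivially from the $l_1$ representation: $|\mu(i) - \nu(i)| \leq |\mu(i) - \eta(i)| + |\eta(i) - \nu(i)|$ pointwise, and $|\gamma \mu(i) + (1-\gamma)\nu(i) - \eta(i)| \leq \gamma |\mu(i) - \eta(i)| + (1-\gamma)|\nu(i) - \eta(i)|$ pointwise, after which one sums over $i$ and multiplies by $\tfrac12$.

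For the coupling inequality, for any coupling $(X,Y)$ of $(\mu,\nu)$ and any $A \subset [n]$, I would write
\begin{equation}
\mu(A) - \nu(A) = \P(X \in A) - \P(Y \in A) \leq \P(X \in A, Y \notin A) \leq \P(X \neq Y),
\end{equation}
and since this holds with the roles of $\mu,\nu$ swapped, taking the maximum over $A$ gives $\|\mu - \nu\| \leq \P(X \neq Y)$.

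The only step with any substance is producing a coupling that attains equality. I would do this by the standard maximal coupling construction: set $p = \sum_i \min\{\mu(i),\nu(i)\} = 1 - \|\mu - \nu\|$ (using the $l_1$ identity), and define $(X,Y)$ by flipping a coin with bias $p$. If heads, sample a common value $Z$ from the distribution $\min(\mu,\nu)/p$ and set $X = Y = Z$. If tails, sample $X$ and $Y$ independently from $(\mu - \min(\mu,\nu))/(1-p)$ and $(\nu - \min(\mu,\nu))/(1-p)$, respectively (these are valid distributions when $p < 1$; if $p = 1$ then $\mu = \nu$ and we simply set $X = Y \sim \mu$). A direct check shows $X \sim \mu$ and $Y \sim \nu$, and the supports of the two tails-case distributions are disjoint (the first lives on $A^*$ and the second on $(A^*)^c$), so $\P(X \neq Y) = 1 - p = \|\mu - \nu\|$. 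This is the only place an explicit construction is needed; the rest of the lemma is pointwise bookkeeping, so I expect no real obstacles.
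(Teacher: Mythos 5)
Your proof is correct and is exactly the standard argument (the $A^* = \{i : \mu(i) > \nu(i)\}$ decomposition for the $l_1$ identity, pointwise inequalities for triangle and convexity, and the maximal coupling with overlap mass $p = \sum_i \min\{\mu(i),\nu(i)\}$ for attainment). The paper does not spell out a proof at all—it simply cites Propositions 4.2 and 4.7 of Levin--Peres--Wilmer for the $l_1$ and coupling statements and remarks that triangle and convexity follow from the $l_1$ form—so you have in effect reproduced the cited textbook arguments rather than found a different route.
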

\begin{proof}
For $l_1$ equivalence, see Proposition 4.2 in \cite{levin2009markov}. The triangle inequality and convexity can then be proven using the corresponding $l_1$ properties. For coupling, see Proposition 4.7 in \cite{levin2009markov}.
\end{proof}

We next collect some basic mixing time results. 
\begin{lemma} \label{lemBasicMixing}
Let $P_n \in \mathcal{E}_n\ \forall\ n \in \N$, and let $\epsilon \in (0,1)$ be independent of $n$.
\begin{itemize}
\item For any $n, t \in \N$, $d_n(t) \leq \max_{i,j \in [n]} \| e_i P_n^t - e_j P_n^t \|$.
\item If each $P_n$ is lazy, then $\sup_{\delta \in (0,1)} \liminf_{n \rightarrow \infty} t_{\mix}^{(n)}(\delta) = \infty$.
\item For any $n, k \in \N$, $d_n( k t_{\mix}^{(n)}(\epsilon) ) \leq (2 \epsilon)^k$. (Note this inequality motivates the convential choice $\epsilon = 1/4$, since we then obtain the convenient inequality $d_n( k t_{\mix}^{(n)}) \leq 2^{-k}$.)
\item If $P_n$ is reversible, then $t_{\mix}^{(n)}(\epsilon) \geq ( t_{\rel}^{(n)} - 1 ) \log ( 1 / ( 2 \epsilon ) )$. 
\item If $\{ P_n \}_{n \in \N}$ exhibits pre-cutoff and each $P_n$ is reversible, then $t_{\rel}^{(n)} = o ( t_{\mix}^{(n)}(\epsilon) )$.
\end{itemize}
\end{lemma}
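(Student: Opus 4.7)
Items 1--3 are standard. For item 1, write $\pi_n = \sum_j \pi_n(j) e_j$ and apply convexity of total variation in the second argument to get $\|e_i P_n^t - \pi_n\| \leq \sum_j \pi_n(j) \|e_i P_n^t - e_j P_n^t\| \leq \max_j \|e_i P_n^t - e_j P_n^t\|$, then maximize over $i$. For item 2, laziness gives $P_n^t(i,i) \geq 2^{-t}$ (at each step the chain stays put with probability $\geq 1/2$), and averaging forces $\min_i \pi_n(i) \leq 1/n$; hence $d_n(t) \geq 2^{-t} - 1/n$ at a minimum-stationary-mass state, so $t_{\mix}^{(n)}(\delta) \geq \log_2(1/(\delta + 1/n))$, and taking $\liminf_n$ and then $\sup_\delta$ gives $\infty$. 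For item 3, introduce $\bar{d}_n(t) = \max_{i,j} \|e_i P_n^t - e_j P_n^t\|$: item 1 and the triangle inequality sandwich $d_n(t) \leq \bar{d}_n(t) \leq 2 d_n(t)$, while a standard two-step coupling argument (couple $(e_iP_n^s, e_jP_n^s)$ optimally, then continue with an optimal coupling from the pair of coupled states) shows $\bar{d}_n(s+t) \leq \bar{d}_n(s) \bar{d}_n(t)$. Iterating $k$ times and taking $s = t = t_{\mix}^{(n)}(\epsilon)$ gives $d_n(k t_{\mix}^{(n)}(\epsilon)) \leq \bar{d}_n(t_{\mix}^{(n)}(\epsilon))^k \leq (2\epsilon)^k$.

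For item 4, I would use the spectral decomposition available for reversible $P_n$: any eigenfunction $f$ with eigenvalue $\lambda \neq 1$ and $\E_{\pi_n}[f] = 0$ satisfies $\E_x[f(X_n(t))] = \lambda^t f(x)$, and the $\ell_1$ characterization of TV yields $|\E_x[f(X_n(t))] - \E_{\pi_n}[f]| \leq 2 \|f\|_\infty d_n(t)$. Choosing $f$ associated with the extremal eigenvalue $\pm \lambda_n^*$ and $x$ maximizing $|f(x)|$ gives $(\lambda_n^*)^t \leq 2 d_n(t)$. Setting $t = t_{\mix}^{(n)}(\epsilon)$, taking logs (so $\epsilon < 1/2$ is needed for a nontrivial bound), and using $-\log \lambda_n^* \leq (1-\lambda_n^*)/\lambda_n^*$ to convert into the relaxation time yields $t_{\mix}^{(n)}(\epsilon) \geq (t_{\rel}^{(n)} - 1) \log(1/(2\epsilon))$.

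Item 5 combines item 4 with pre-cutoff. Let $M$ denote the finite pre-cutoff supremum, so that for every $\eta \in (0,1/2)$ there is $N_\eta$ with $t_{\mix}^{(n)}(\eta) \leq (M+1) t_{\mix}^{(n)}(1-\eta)$ for $n \geq N_\eta$. Fix $\epsilon \in (0,1)$ and any $\delta \in (0, \min(1/2, 1-\epsilon))$; for $n \geq N_\delta$, monotonicity \eqref{eqMixMonotone} gives $t_{\mix}^{(n)}(1-\delta) \leq t_{\mix}^{(n)}(\epsilon)$, hence $t_{\mix}^{(n)}(\delta) \leq (M+1) t_{\mix}^{(n)}(\epsilon)$. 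Plugging into item 4 yields $t_{\rel}^{(n)}/t_{\mix}^{(n)}(\epsilon) \leq (M+1)/\log(1/(2\delta)) + 1/t_{\mix}^{(n)}(\epsilon)$. Taking $\limsup_n$ (using that $t_{\mix}^{(n)}(\epsilon) \to \infty$ in every context in which the paper invokes this lemma, e.g.\ by combining laziness with item 2 and pre-cutoff) and then $\delta \to 0$ gives the claim. The main obstacle is item 5: one must choose $\delta$ small enough both to invoke pre-cutoff ($\delta < 1-\epsilon$) and to drive the logarithmic denominator to $\infty$; the case of $\epsilon$ near $1$ is the trickiest, since pure monotonicity cannot relate $t_{\mix}^{(n)}(\delta)$ to $t_{\mix}^{(n)}(\epsilon)$ and pre-cutoff must carry the full weight of the argument.
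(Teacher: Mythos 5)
Your proof is correct and matches the paper's: items 1 and 2 are argued exactly as in Appendix A (convexity plus global balance, and the lazy lower bound $d_n(t) \geq 2^{-t} - 1/n$ at a state of stationary mass at most $1/n$), while for items 3--5 the paper simply cites Equation 4.34, Theorem 12.4, and Proposition 18.4 of \cite{levin2009markov}, and your arguments are precisely the standard proofs of those results (submultiplicativity of $\bar{d}_n$, the eigenfunction bound $(\lambda_n^*)^t \leq 2 d_n(t)$ with $-\log \lambda_n^* \leq (1-\lambda_n^*)/\lambda_n^*$, and their combination). Your caveat on item 5 is well taken: the conclusion $t_{\rel}^{(n)} = o( t_{\mix}^{(n)}(\epsilon) )$ genuinely requires $t_{\mix}^{(n)}(\epsilon) \rightarrow \infty$ (since $t_{\rel}^{(n)} \geq 1$ always), which the lemma's hypotheses do not literally supply without laziness; as you note, laziness holds wherever the paper invokes this item, so the gap is in the statement's generality rather than in your argument.
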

\begin{proof}
The first statement holds by global balance and convexity (see Lemma \ref{lemBasicTV}), i.e.\
\begin{equation}
d_n(t) \leq \max_{i \in [n]} \sum_{j \in [n]} \pi_n(j) \| e_i P_n^t - e_j P_n^t \| \leq \max_{i,j \in [n]} \| e_i P_n^t - e_j P_n^t \| .
\end{equation}
For the second statement, let $i_n \in [n]$ be s.t.\ $\pi_n(i_n) \leq 1/n\ \forall\ n \in \N$ (clearly, such $i_n$ exists).  Then by definition of $d_n(t)$, definition of total variation, and laziness, we have $\forall\ n,t \in \N$,
\begin{equation}
d_n( t ) \geq \| e_{i_n} P_n^t - \pi_n \| \geq ( e_{i_n} P_n^t ) (i_n)  - \pi_n(i_n) \geq 2^{-t} - 1/n .
\end{equation}
As a consequence of this inequality, we obtain
\begin{equation}
t_{\mix}^{(n)}(\delta) \geq \log_2 \left( \frac{1}{\delta+1/n} \right)\ \forall\ n \in \N, \delta \in (0,1) \quad \Rightarrow \quad \sup_{\delta \in (0,1)} \liminf_{n \rightarrow \infty} t_{\mix}^{(n)}(\delta) = \infty .
\end{equation}
For the other statements, see Equation 4.34, Theorem 12.4, and Proposition 18.4 in \cite{levin2009markov}.
\end{proof}

Finally, we have a formula for the stationary distribution of restart perturbations.
\begin{lemma} \label{lemPropRestart}
Let $P_n \in \mathcal{E}_n, \alpha_n \in (0,1), \sigma_n \in \Delta_{n-1}$. Then
\begin{equation}
\pi_{\alpha_n,\sigma_n} = \alpha_n \sum_{t=0}^{\infty} (1-\alpha_n)^t \sigma_n P_n^t ,
\end{equation}
where (we recall) $\pi_{\alpha_n,\sigma_n}$ is the stationary distribution of $P_{\alpha_n,\sigma_n} = (1-\alpha_n) P_n + \alpha_n 1_n^{\trans} \sigma_n$.
\end{lemma}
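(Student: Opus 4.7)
The plan is to verify the claimed formula directly by (i) showing the right-hand side is a probability distribution, (ii) checking it is invariant under $P_{\alpha_n,\sigma_n}$, and (iii) invoking uniqueness of the stationary distribution. No obstacle is anticipated; this is a routine manipulation using a telescoping/re-indexing trick on the geometric series.

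Let me denote the candidate $\mu := \alpha_n \sum_{t=0}^{\infty} (1-\alpha_n)^t \sigma_n P_n^t$. First I would observe that each summand is a nonnegative vector and that $\sigma_n P_n^t \cdot 1_n^{\trans} = 1$ for every $t$ (since $P_n$ is stochastic and $\sigma_n \in \Delta_{n-1}$), so summing the geometric series $\alpha_n \sum_{t=0}^{\infty}(1-\alpha_n)^t = 1$ gives $\mu \in \Delta_{n-1}$. In particular, the series converges entrywise, which justifies the manipulations that follow.

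Next I would verify $\mu P_{\alpha_n,\sigma_n} = \mu$. Expanding,
\begin{equation}
\mu P_{\alpha_n,\sigma_n} = (1-\alpha_n)\, \mu P_n + \alpha_n (\mu \cdot 1_n^{\trans}) \sigma_n = (1-\alpha_n)\, \mu P_n + \alpha_n \sigma_n,
\end{equation}
using $\mu \cdot 1_n^{\trans} = 1$. Then shift the index in $\mu P_n$:
\begin{equation}
(1-\alpha_n)\, \mu P_n = \alpha_n \sum_{t=0}^{\infty} (1-\alpha_n)^{t+1} \sigma_n P_n^{t+1} = \alpha_n \sum_{s=1}^{\infty} (1-\alpha_n)^{s} \sigma_n P_n^{s} = \mu - \alpha_n \sigma_n,
\end{equation}
where the last equality just subtracts the $s=0$ term. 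Substituting back gives $\mu P_{\alpha_n,\sigma_n} = \mu - \alpha_n \sigma_n + \alpha_n \sigma_n = \mu$, as desired.

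Finally, since $P_{\alpha_n,\sigma_n} \in \mathcal{E}_n$ (it is irreducible and aperiodic because the additive rank-one term $\alpha_n 1_n^{\trans} \sigma_n$ puts mass on the support of $\sigma_n$ from every state at every step, and on this support there is already a self-loop), it has a unique stationary distribution. Combined with steps (i) and (ii), this forces $\mu = \pi_{\alpha_n,\sigma_n}$, completing the proof.
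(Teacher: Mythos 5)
Your proof is correct. It takes the reverse direction from the paper's argument: the paper starts from the global balance equation $\pi_{\alpha_n,\sigma_n} = \pi_{\alpha_n,\sigma_n} P_{\alpha_n,\sigma_n}$, rearranges it to $\alpha_n \sigma_n = \pi_{\alpha_n,\sigma_n}(I_n - (1-\alpha_n)P_n)$, and then \emph{solves} for $\pi_{\alpha_n,\sigma_n}$ by inverting via the Neumann series $(I-A)^{-1} = \sum_{t\ge 0} A^t$, whereas you \emph{guess} the series as a candidate and verify stationarity directly by an index shift, then appeal to uniqueness. The two routes buy slightly different things: the paper's derivation needs the Neumann-series inversion (hence a spectral-radius observation for $(1-\alpha_n)P_n$) but gets the formula without having to exhibit it in advance; yours needs only elementary convergence of the geometric series plus the fact that $P_{\alpha_n,\sigma_n} \in \mathcal{E}_n$ so that the stationary distribution is unique. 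On that last point, your justification of irreducibility is slightly incomplete as stated: the rank-one term shows every state reaches the support of $\sigma_n$ in one step, but you also need that the support of $\sigma_n$ reaches every state, which follows because the $(1-\alpha_n)P_n$ component preserves all transitions of the irreducible chain $P_n$. This is a one-line fix, not a gap.
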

\begin{proof}
Using global balance and the fact that $\pi_{\alpha_n,\sigma_n}$ sums to 1 (i.e.\ $\pi_{\alpha_n,\sigma_n} 1_n^{\trans} = 1$),
\begin{equation}
\pi_{\alpha_n,\sigma_n} = \pi_{\alpha_n,\sigma_n} P_{\alpha_n,\sigma_n} = (1-\alpha_n) \pi_{\alpha_n,\sigma_n} P_n + \alpha_n \sigma_n \Rightarrow \alpha_n \sigma_n = \pi_{\alpha_n,\sigma_n} ( I_n - (1-\alpha_n) P_n ) .
\end{equation}
Finally, using the well-known identity $(I-A)^{-1} = \sum_{t=0}^{\infty} A^t$ for a matrix $A$ with spectral radius $\rho(A) < 1$, we obtain the desired result.
\end{proof}

\section{Additional proofs from Section \ref{secProofTrichotomy}} 

We next establish the two corollaries whose proofs were deferred from Section \ref{secProofTrichotomy}.

\subsection{Proof of Corollary \ref{corFinalUpper}} \label{appProofFinalUpper}

We first assume $\epsilon = 1/4$; we will then extend the proof to the case $\epsilon \neq 1/4$. In the case $\epsilon = 1/4$ (in fact, any $\epsilon < 1/2$), \eqref{eqCorZeroRegime} follows immediately from Corollary \ref{corUpperWeakAss}. To prove \eqref{eqCorMiddleRegimeUpper}, assume for the sake of contradiction  $\exists\ \{ \tilde{P}_n \}_{n \in \N}$ with $\tilde{P}_n \in B(P_n,\alpha_n)\ \forall\ n \in \N$ and $
\limsup_{n \rightarrow \infty} \| \pi_n - \tilde{\pi}_n \| > 1 - e^{-c}$. Clearly, if this inequality holds, then the interval
\begin{equation}
\left( 0 , \min \left\{ 1/4 ,  \frac{\limsup_{n \rightarrow \infty} \| \pi_n - \tilde{\pi}_n \| - (1-e^{-c})}{e^{-c}}  \right\} \right) 
\end{equation}
is nonempty, so we can choose $\delta$ in this interval. Since $\delta < 1/4$ by construction, \eqref{eqMixMonotone} implies
\begin{equation} \label{eqCorFourthToEps1}
\alpha_n t_{\mix}^{(n)} \leq \alpha_n t_{\mix}^{(n)}(\delta) = \alpha_n t_{\mix}^{(n)}(1-\delta) \frac{t_{\mix}^{(n)}(\delta) }{t_{\mix}^{(n)}(1-\delta) } \leq \alpha_n t_{\mix}^{(n)}  \frac{t_{\mix}^{(n)}(\delta) }{t_{\mix}^{(n)}(1-\delta) }  .
\end{equation}
Hence, using the definition of $c$ and the cutoff assumption,
\begin{equation} \label{eqCorFourthToEps2}
c = \lim_{n \rightarrow \infty} \alpha_n t_{\mix}^{(n)} \leq \lim_{n \rightarrow \infty} \alpha_n t_{\mix}^{(n)}(\delta) \leq \lim_{n \rightarrow \infty} \alpha_n t_{\mix}^{(n)} \times \lim_{n \rightarrow \infty} \frac{t_{\mix}^{(n)}(\delta) }{t_{\mix}^{(n)}(1-\delta) } = c \times 1 = c ,
\end{equation}
so that $\lim_{n \rightarrow \infty} \alpha_n t_{\mix}^{(n)}(\delta) = c$. Assuming for the moment that $t_{\mix}^{(n)}(\delta) \rightarrow \infty$, we can then use \eqref{eqMiddleRegimeUpper} and the choice of $\delta$ to obtain
\begin{equation}
\limsup_{n \rightarrow \infty} \| \pi_n - \tilde{\pi}_n \| \leq 1 - e^{-c} + \delta e^{-c} < \limsup_{n \rightarrow \infty} \| \pi_n - \tilde{\pi}_n \| ,
\end{equation}
which is a contradiction. Now to see why $t_{\mix}^{(n)}(\delta) \rightarrow \infty$ holds, first note that $\forall\ \delta' \in (0,\delta)$, 
\begin{equation}
t_{\mix}^{(n)}(\delta) = t_{\mix}^{(n)}(\delta') \frac{t_{\mix}^{(n)}(\delta)}{t_{\mix}^{(n)}(\delta')} \geq t_{\mix}^{(n)}(\delta') \frac{t_{\mix}^{(n)}(1-\delta')}{t_{\mix}^{(n)}(\delta')} ,
\end{equation}
where the inequality holds by \eqref{eqMixMonotone}. Hence, by cutoff, we obtain $\forall\ \delta' \in (0,\delta)$,
\begin{equation}
\liminf_{n \rightarrow \infty} t_{\mix}^{(n)}(\delta) \geq \liminf_{n \rightarrow \infty} t_{\mix}^{(n)}(\delta') .
\end{equation}
On the other hand, the previous inequality immediately holds $\forall\ \delta' \in [\delta,1)$ by \eqref{eqMixMonotone}. Therefore,
\begin{equation}
\liminf_{n \rightarrow \infty} t_{\mix}^{(n)}(\delta) \geq \sup_{\delta' \in (0,1)} \liminf_{n \rightarrow \infty} t_{\mix}^{(n)}(\delta') = \infty ,
\end{equation}
where the equality holds by Lemma \ref{lemBasicMixing} in Appendix \ref{appExisting}.

Finally, we extend the upper bounds to $\epsilon \neq 1/4$, for which it suffices to show
\begin{equation} \label{eqProofAllRegExt}
\lim_{n \rightarrow \infty} \alpha_n t_{\mix}^{(n)}(\epsilon) = c \quad \Rightarrow \quad \lim_{n \rightarrow \infty} \alpha_n t_{\mix}^{(n)} = c ,
\end{equation}
after which we can invoke the result from the case $\epsilon = 1/4$ to complete the proof. \eqref{eqProofAllRegExt} is an almost direct consequence of cutoff. To prove it, we first use \eqref{eqMixMonotone} to obtain
\begin{align}
\epsilon \in (0,1/4) & \Rightarrow \alpha_n t_{\mix}^{(n)}(\epsilon) \geq \alpha_n t_{\mix}^{(n)} = \alpha_n t_{\mix}^{(n)}(\epsilon) \frac{ t_{\mix}^{(n)} }{ t_{\mix}^{(n)}(\epsilon) } \geq \alpha_n t_{\mix}^{(n)}(\epsilon) \frac{ t_{\mix}^{(n)}(1-\epsilon) }{ t_{\mix}^{(n)}(\epsilon) } , \\
\epsilon \in (1/4,3/4] & \Rightarrow \alpha_n t_{\mix}^{(n)}(\epsilon) \leq \alpha_n t_{\mix}^{(n)} = \alpha_n t_{\mix}^{(n)}(\epsilon) \frac{ t_{\mix}^{(n)} }{ t_{\mix}^{(n)}(\epsilon) } \leq \alpha_n t_{\mix}^{(n)}(\epsilon) \frac{ t_{\mix}^{(n)} }{ t_{\mix}^{(n)}(3/4) } , \\
\epsilon \in (3/4,1) & \Rightarrow \alpha_n t_{\mix}^{(n)}(\epsilon) \leq \alpha_n t_{\mix}^{(n)} = \alpha_n t_{\mix}^{(n)}(\epsilon) \frac{ t_{\mix}^{(n)} }{ t_{\mix}^{(n)}(\epsilon) } \leq \alpha_n t_{\mix}^{(n)}(\epsilon) \frac{ t_{\mix}^{(n)}(1-\epsilon) }{ t_{\mix}^{(n)}(\epsilon) } .
\end{align}
Now letting $n \rightarrow \infty$ and using cutoff in the three cases, \eqref{eqProofAllRegExt} follows as in \eqref{eqCorFourthToEps2}.

\subsection{Proof of Corollary \ref{corLowerWithoutDeltas}} \label{appProofLowerWithoutDeltas}

We begin with \eqref{lemLowerStongerInf}. Let $\delta_k = 2^{-(k+1)} / 3 \in (0,1/2)\ \forall\ k \in \N$. We claim $\lim_{n \rightarrow \infty} \alpha_n t_{\mix}^{(n)}(\delta_k) = \infty\ \forall\ k \in \N$, which we prove as follows:
\begin{itemize}
\item If $\delta_k \leq \epsilon$, then $t_{\mix}^{(n)}(\delta_k) \geq t_{\mix}^{(n)}(\epsilon)$ by \eqref{eqMixMonotone}, so $\alpha_n t_{\mix}^{(n)}(\delta_k) \rightarrow \infty$ by assumption.
\item If $\delta_k > \epsilon$ and $\epsilon < 1/2$, then $\delta_k < 1/2 < 1 - \epsilon$, so $t_{\mix}^{(n)}(\delta_k) \geq t_{\mix}^{(n)}(1-\epsilon)$ by \eqref{eqMixMonotone}, and
\begin{equation}
\alpha_n t_{\mix}^{(n)}(\delta_k) = \alpha_n t_{\mix}^{(n)}(\epsilon) \frac{  t_{\mix}^{(n)}(\delta_k) }{  t_{\mix}^{(n)}(\epsilon) } \geq \alpha_n t_{\mix}^{(n)}(\epsilon) \frac{  t_{\mix}^{(n)}(1-\epsilon) }{  t_{\mix}^{(n)}(\epsilon) } \xrightarrow[n \rightarrow \infty]{} \infty  ,
\end{equation}
where the limit holds since $\alpha_n t_{\mix}^{(n)}(\epsilon) \rightarrow \infty$ by assumption and since $t_{\mix}^{(n)}(1-\epsilon) / t_{\mix}^{(n)}(\epsilon)$ is lower bounded by a positive constant as $n \rightarrow \infty$ by pre-cutoff.
\item The final case, $\delta_k > \epsilon$ and $\epsilon \geq 1/2$, cannot occur, since $\delta_k < 1/2\ \forall\ k \in \N$.
\end{itemize}
We have verified the assumptions of Corollary \ref{corLowerWithDeltas}, so for each $k \in \N$ we can find $\{ \tilde{P}_n^{(k)} \}_{n \in \N}$ s.t.\ $\tilde{P}_n^{(k)} \in  B(P_n,\alpha_n)\ \forall\ n \in \N$, and, denoting the stationary distribution of $\tilde{P}_n^{(k)}$ by $\tilde{\pi}_n^{(k)}$,
\begin{equation} \label{eqInfInvokeWeak}
\liminf_{n \rightarrow \infty} \| \pi_n - \tilde{\pi}_n^{(k)} \| \geq 1 - 3 \delta_k = 1 - 2^{-(k+1)} .
\end{equation}
Note that, as a consequence of \eqref{eqInfInvokeWeak}, $\forall\ k \in \N\ \exists\ N_k \in \N$ s.t.\
\begin{equation} \label{eqNkDefn}
\| \pi_n - \tilde{\pi}_n^{(k)} \| > 1 - 2^{-k}\ \forall\ n \geq N_k .
\end{equation}
We complete the proof of \eqref{lemLowerStongerInf} by considering two cases. 
\begin{enumerate}
\item $\lim_{k \rightarrow \infty} N_k = \infty$. Our goal is to use $\{ \tilde{P}_n^{(k)} \}_{n , k \in \N}$ to construct $\{ \tilde{P}_n \}_{n \in \N}$ satisfying \eqref{lemLowerStongerInf}. The construction proceeds as follows:
\begin{itemize}
\item If $n < \min_{k \in \N} N_k$, set $\tilde{P}_n = \tilde{P}_n^{(1)}$. (The choice $k=1$ is arbitrary.)
\item If $n \geq \min_{k \in \N} N_k$, let $k_n = \max \{ k \in \N : n \geq N_k \}$ and set $\tilde{P}_n = \tilde{P}_n^{(k_n)}$. (Note $n \geq \min_{k \in \N} N_k$ guarantees $\{ k \in \N : n \geq N_k \} \neq \emptyset$, while $N_k \rightarrow \infty$ guarantees $| \{ k \in \N : n \geq N_k \} | < \infty$, so $k_n$ is well-defined.)
\end{itemize}
Note that, since $\tilde{P}_n^{(k)} \in  B(P_n,\alpha_n)\ \forall\ n,k \in \N$ by Corollary \ref{corLowerWithDeltas}, this construction guarantees $\tilde{P}_n \in  B(P_n,\alpha_n)\ \forall\ n \in \N$ as well. Additionally, for $n \geq \min_{k \in \N} N_k$, we have $n \geq N_{k_n}$ by definition. Hence, because $\tilde{\pi}_n = \tilde{\pi}_n^{(k_n)}$ for all such $n$, we can use \eqref{eqNkDefn} to obtain
\begin{gather} \label{eqKnToInfSuff}
\| \pi_n - \tilde{\pi}_n \| = \| \pi_n - \tilde{\pi}_n^{(k_n)} \| > 1 - 2^{-k_n}\ \forall\  n \geq \min_{k \in \N} N_k  \\
\Rightarrow  \liminf_{n \rightarrow \infty} \| \pi_n - \tilde{\pi}_n \|  \geq 1 - \limsup_{n \rightarrow \infty} 2^{-k_n} .
\end{gather}
Thus, to complete the proof, it suffices to show $k_n \rightarrow \infty$ as $n \rightarrow \infty$. For this, let $M > 0$ and define $N^{(m)} = \max \{ N_1, \ldots, N_{\ceil{M}} \}$. Then $k_n \geq \ceil{M} \geq M\ \forall\ n \geq N^{(m)}$, so since $M > 0$ was arbitrary, $k_n \rightarrow \infty$ as $n \rightarrow \infty$ follows. 
\item $\lim_{k \rightarrow \infty} N_k < \infty$. Here the construction is much simpler: we set $\tilde{P}_n = \tilde{P}_n^{(n)}\ \forall\ n \in \N$. Then for all $n$ sufficiently large, $n \geq N_n$, so for such $n$,
\begin{equation}
\| \pi_n - \tilde{\pi}_n \| = \| \pi_n - \tilde{\pi}_n^{(n)} \| > 1 - 2^{-n} ,
\end{equation}
from which it is clear that $\lim_{n \rightarrow \infty} \| \pi_n - \tilde{\pi}_n \|  = 1$.
\end{enumerate}

We prove \eqref{lemLowerStongerC} similarly. Similar to above, let $\delta_k = 2^{-(k+1)}/3 \in (0,1/2)$; we claim that $\lim_{n \rightarrow \infty} \alpha_n t_{\mix}^{(n)}(\delta_k) = c$. To prove this, first note (provided the limits exist in $(0,\infty)$)
\begin{equation}
\lim_{n \rightarrow \infty} \alpha_n t_{\mix}^{(n)}(\delta_k) = \lim_{n \rightarrow \infty} \alpha_n t_{\mix}^{(n)}(\epsilon) \lim_{n \rightarrow \infty} \frac{ t_{\mix}^{(n)}(\delta_k) }{  t_{\mix}^{(n)}(\epsilon) } = c \lim_{n \rightarrow \infty} \frac{ t_{\mix}^{(n)}(\delta_k) }{  t_{\mix}^{(n)}(\epsilon) } ,
\end{equation}
so it suffices to show $\lim_{n \rightarrow \infty}  t_{\mix}^{(n)}(\delta_k) / t_{\mix}^{(n)}(\epsilon)  = 1$. This can be proven as follows:
\begin{itemize}
\item If $\epsilon \leq \delta_k \leq 1-\epsilon$, we have $t_{\mix}^{(n)}(1-\epsilon) \leq t_{\mix}^{(n)}(\delta_k) \leq t_{\mix}^{(n)}(\epsilon)$ by \eqref{eqMixMonotone}, so by cutoff,
\begin{equation}
1 = \lim_{n \rightarrow \infty} \frac{ t_{\mix}^{(n)}(1-\epsilon) }{  t_{\mix}^{(n)}(\epsilon) } \leq  \lim_{n \rightarrow \infty} \frac{ t_{\mix}^{(n)}(\delta_k) }{  t_{\mix}^{(n)}(\epsilon) } \leq \lim_{n \rightarrow \infty} \frac{ t_{\mix}^{(n)}(\epsilon) }{  t_{\mix}^{(n)}(\epsilon) } = 1 .
\end{equation}
\item If $\delta_k \leq \epsilon \leq 1-\epsilon$, we have $t_{\mix}^{(n)}(1-\delta_k) \leq t_{\mix}^{(n)}(\epsilon) \leq t_{\mix}^{(n)}(\delta_k)$ by \eqref{eqMixMonotone}, so by cutoff,
\begin{equation}
1 = \lim_{n \rightarrow \infty} \frac{ t_{\mix}^{(n)}(1-\delta_k) }{  t_{\mix}^{(n)}(\delta_k) } \leq \lim_{n \rightarrow \infty} \frac{ t_{\mix}^{(n)}(\epsilon) }{  t_{\mix}^{(n)}(\delta_k) } \leq \lim_{n \rightarrow \infty} \frac{ t_{\mix}^{(n)}(\epsilon) }{  t_{\mix}^{(n)}(\epsilon) } = 1 . 
\end{equation}
\item If $1-\epsilon \leq \delta_k \leq \epsilon$ or $\delta_k \leq 1-\epsilon \leq \epsilon$, the result holds by reversing the roles of $\epsilon$ and $1-\epsilon$.
\item Finally, $\epsilon \leq 1-\epsilon \leq \delta_k$ and $1-\epsilon \leq \epsilon \leq \delta_k$ cannot occur since $\delta_k < 1/2$.
\end{itemize}
We have shown $\delta_k \in (0,1/2)$ and $\lim_{n \rightarrow \infty} \alpha_n t_{\mix}^{(n)}(\delta_k) = c$ $\forall\ k \in \N$. Hence, for each $k$, we can use Corollary \ref{corLowerWithDeltas} to find $\{ \tilde{P}_n^{(k)} \}_{n \in \N}$ s.t.\ $\tilde{P}_n^{(k)} \in  B(P_n,\alpha_n)\ \forall\ n \in \N$, and
\begin{equation}
\liminf_{n \rightarrow \infty} \| \pi_n - \tilde{\pi}_n^{(k)} \| \geq  1 - e^{-c} - 2^{-(k+1)} .
\end{equation}
From here, the proof can be completed in a similar manner as the proof of \eqref{lemLowerStongerInf}, by simply replacing $1$ with $1-e^{-c}$ in the analysis following \eqref{eqInfInvokeWeak}.

\section{Proofs for example chains}

Finally, we prove Propositions \ref{propExamplesMixing} and \ref{propExamplesPerturb} from Section \ref{secExamples}, which bounds the mixing times and perturbation error, respectively, for the example chains.

\subsection{Proof of Proposition \ref{propExamplesMixing}} \label{appProofExamplesMixing}

\subsubsection{Winning streak reversal}

Most of the arguments are recounted from Section 4.6 of \cite{levin2009markov}. First, for $i \in [n-1]$, the chain started from $i$ reaches stationarity in $i$ steps, i.e.\
\begin{equation}
e_i P_n^i = e_i P_n^{i-1} P_n = e_1 P_n = \pi_n .
\end{equation}
It remains to analyze the chain starting from $n$. First, we claim that for $j \in[n-1]$,
\begin{equation} \label{eqWsrFromNclaim}
e_n P_n^j = \sum_{i=1}^j 2^{i-j-1} e_{n-i} + 2^{-j} e_n .
\end{equation}
This claim can be proven inductively: for $j=1$, the left side of \eqref{eqWsrFromNclaim} is $(e_{n-1}+e_n)/2$ by \eqref{eqWsrTransAndStat}, while the right side of \eqref{eqWsrFromNclaim} is clearly $(e_{n-1}+e_n)/2$; assuming true for $j$, we have
\begin{align}
e_n P_n^{j+1} & = e_n P_n P_n^j = \frac{1}{2} ( e_{n-1} + e_n ) P_n^j  = \frac{1}{2} e_{n-1-j} + \frac{1}{2} \left( \sum_{i=1}^j 2^{i-j-1} e_{n-i} + 2^{-j} e_n \right) \\
& = 2^{-1} e_{n-(j+1)} + \sum_{i=1}^j 2^{i-(j+1)-1} e_{n-i} + 2^{-(j+1)} e_n \\
& = \sum_{i=1}^{j+1} 2^{i-(j+1)-1} e_{n-i} + 2^{-(j+1)} e_n ,
\end{align}
which establishes \eqref{eqWsrFromNclaim}. Now taking $j = n-1$ in \eqref{eqWsrFromNclaim}, we obtain
\begin{equation}
e_n P_n^{n-1} = \sum_{i=1}^{n-1} 2^{i-(n-1)-1} e_{n-i} + 2^{-(n-1)} e_n = 2^{-1} e_1 + \cdots + 2^{-(n-1)} e_{n-1} + 2^{-(n-1)} e_n = \pi_n .
\end{equation}
To summarize, we have shown $e_i P_n^i = \pi_n\ \forall\ i \in [n-1]$ and $e_i P_n^{n-1} = \pi_n$, which implies
\begin{equation}
d_n(n-1) = \max_{i \in [n]} \| e_i P_n^{n-1} - \pi_n \| = 0 \quad \Rightarrow \quad t_{\mix}^{(n)}(1-\epsilon) , t_{\mix}^{(n)}(\epsilon) \leq n-1 .
\end{equation}
For a lower bound on the $\epsilon$-mixing time, note that, by \eqref{eqWsrFromNclaim}, $P_n^{n-2}(n,1) = 0$, where $P_n^{n-2}(n,1)$ is the $(n,1)$-th element of $P_n^{n-2}$. Hence, we immediately obtain
\begin{equation}
d_n(n-2) \geq \| e_n P_n^{n-2} - \pi_n \| \geq \pi_n(1) - P_n^{n-2}(n,1) = \frac{1}{2} > \epsilon \quad \Rightarrow \quad t_{\mix}^{(n)}(\epsilon) > n-2 ,
\end{equation}
so, combining with the above, we conclude $t_{\mix}^{(n)}(\epsilon) = n-1$. Finally, to lower bound the $(1-\epsilon)$-mixing time, first note that for any $t \in \{0,\ldots,n-2\}$, we have $e_{n-1} P_n^t = e_{n-1-t}$, so
\begin{equation}
d_n(t) \geq \| e_{n-1} P_n^t - \pi_n \| = \| e_{n-1-t} - \pi_n \| \geq 1 - \pi_n(n-1-t) = 1 - 2^{-n+1+t} .
\end{equation}
Hence, for $t < n -1 - \log_2(1/\epsilon)$, we obtain
\begin{equation}
d_n(t) \geq 1 - 2^{-n+1+t} > 1 - 2^{-\log_2(1/\epsilon)} = 1 - \epsilon \quad \Rightarrow \quad t_{\mix}^{(n)}(1-\epsilon) \geq n-1 -\log_2(1/\epsilon) .
\end{equation}

\subsubsection{Complete graph bijection}

We first show $t_{\mix}^{(n)}(1-\epsilon) = 1$ for $n$ large. For $n$ even, we have by Lemma \ref{lemBasicTV} and \eqref{eqCgbTrans}-\eqref{eqCgbStat},
\begin{align}
2 \|  e_1 P_n - \pi_n \| & =  | P_n(1,1) - \pi_n(1) | + \sum_{j \in N(1)} | P_n(1,j) - \pi_n(j) | \\
& \quad\quad + \sum_{j \in [n] \setminus ( \{1\} \cup N(1) ) } | P_n(1,j) - \pi_n(j) |  \\
& =   \left|  \frac{1}{2} - \frac{1}{n}  \right| +  \frac{n}{2} \left| \frac{1}{n} - \frac{1}{n} \right| + \left( \frac{n}{2} - 1 \right) \left| 0 - \frac{1}{n}  \right|  \xrightarrow[n \rightarrow \infty]{} 1 ,
\end{align}
where (we recall) $N(i)$ are the neighbors of $i$ (see \eqref{eqNeighbors} in Section \ref{secProofExampleMain}). Thus, by symmetry, $\max_{i \in [n]} \| e_i P_n - \pi_n \| \rightarrow 1/2$ along even $n$. If $n$ is odd, we similarly have
\begin{align}
2 \|  e_1 P_n - \pi_n \| & = | P_n(1,1) - \pi_n(1) | + | P_n(1,n) - \pi_n(n) |  \\
& \quad\quad + \sum_{j \in N(1) \setminus \{n\} } | P_n(1,j) - \pi_n(j) |  + \sum_{ j \in [n] \setminus ( \{1\} \cup N(1) ) }  | P_n(1,j) - \pi_n(j) | \\
& = \left| \frac{1}{2} - \frac{n+1}{(n+3)(n-1)} \right| + \left| \frac{1}{n+1} - \frac{2}{n+3} \right| \\
& \quad\quad + \frac{n-1}{2} \left| \frac{1}{n+1} - \frac{n+1}{(n+3)(n-1)} \right|  \\
& \quad\quad + \frac{n-3}{2} \left| 0 -  \frac{n+1}{(n+3)(n-1)} \right|  \xrightarrow[n \rightarrow \infty]{} 1 ,
\end{align}
so, by symmetry, $\max_{i \in [n-1]} \| e_i P_n - \pi_n \| \rightarrow 1/2$ along odd $n$. We also note
\begin{align}
2 \|  e_n P_n - \pi_n \| & = \sum_{j=1}^{n-1} | P_n(n,j) - \pi_n(j) | + | P_n(n,n) - \pi_n | \\
& = (n-1) \left| \frac{1}{2(n-1)} - \frac{n+1}{(n+3)(n-1)} \right| + \left| \frac{1}{2} - \frac{2}{n+3} \right| \xrightarrow[n \rightarrow \infty]{} \frac{1}{2} ,
\end{align}
so $\max_{i \in [n]} \| e_i P_n - \pi_n \| \rightarrow 1/2$ along odd $n$. Combined with the analysis for $n$ even,
\begin{equation} \label{eqMixToHalfInOneStep}
\limsup_{n \rightarrow \infty} d_n(1) = \limsup_{n \rightarrow \infty} \max_{i \in [n]} \| e_i P_n - \pi_n \| \leq \frac{1}{2} < 1 - \epsilon , 
\end{equation}
so $t_{\mix}^{(n)}(1-\epsilon) \leq 1$ for large $n$. Finally, since $\| e_i - \pi_n \| \geq 1 - \pi_n(i)\ \forall\ i \in [n]$, $d_n(0) \geq 1 - \min_{i \in [n]} \pi_n(i) \geq 1 - 1/n > 1 - \epsilon$ for fixed $\epsilon$ and $n$ large, so $t_{\mix}^{(n)}(1-\epsilon) > 0$ for such $n$. Taken together, we obtain $t_{\mix}^{(n)}(1-\epsilon) = 1$ for such $n$.

Finally, we show $t_{\mix}^{(n)}(\epsilon) = \Theta(n)$ for the CGB. We have already proven the upper bound in Section \ref{secProofExampleMain}, so we only need to show $t_{\mix}^{(n)}(\epsilon) = \Omega(n)$. For $n$ even, the intuition is that the stationary distribution places equal weight on both cliques, whereas (for small $t$) the distribution of $X_n(t)$ is  biased towards $[n/2]$ if $X_n(0) = 1$. Hence, we write
\begin{equation} \label{eqExLowerBoundApproach}
d_n(t) \geq \| e_1 P_n^t - \pi_n \| \geq P_n^t( 1, [n/2] ) - \pi_n( [n/2] ) = P_n^t( 1, [n/2] ) - \frac{1}{2} ,
\end{equation}
where $P_n^t(i,j)$ is the $(i,j)$-th element of $P_n^t$ for $i,j \in [n]$ and $P_n^t(i,A) = \sum_{j \in A} P_n^t(i,j)$ for $A \subset [n]$. It remains to lower bound $P_n^t( 1, [n/2] )$. For this, we claim
\begin{equation} \label{eqExLowerBoundClaim}
P_n^t(i,[n/2]) \geq \left( 1 - \frac{1}{n} \right)^t\ \forall\ t \in \Z_+,  i \in [n/2] .
\end{equation}
We prove \eqref{eqExLowerBoundClaim} by induction. For $t = 0$, \eqref{eqExLowerBoundClaim} is immediate. Assuming \eqref{eqExLowerBoundClaim} holds for $t$, we have
\begin{align}
P_n^{t+1}(i,[n/2]) & = \sum_{k \in [n]} P_n(i,k) P_n^t(k,[n/2]) \geq \sum_{k \in [n/2]} P_n(i,k) P_n^t(k,[n/2]) \\
& \geq \left( 1 - \frac{1}{n} \right)^t P_n(i,[n/2]) = \left( 1 - \frac{1}{n} \right)^{t+1} ,
\end{align}
where the first inequality holds by nonnegativity of $P_n$, the second inequality is the inductive hypothesis, and the final equality holds by \eqref{eqCgbTrans}. This proves \eqref{eqExLowerBoundClaim}. Substituting into \eqref{eqExLowerBoundApproach},
\begin{equation}
d_n(t) \geq \left( 1 - \frac{1}{n} \right)^t - \frac{1}{2} \geq \left( 1 - \frac{t}{n} \right) - \frac{1}{2} = \frac{1}{2} - \frac{t}{n} ,
\end{equation}
where we have also used Bernoulli's inequality. The following is then immediate:
\begin{equation} \label{eqExLowerFinalEven}
t < n \left( \frac{1}{2} - \epsilon \right) \quad \Rightarrow \quad d_n(t) > \epsilon \quad \Rightarrow \quad  t_{\mix}^{(n)}(\epsilon) > n\left( \frac{1}{2} - \epsilon \right) .
\end{equation}
We next assume $n$ is odd. Here the argument is nearly identical: since by \eqref{eqCgbTrans},
\begin{equation}
P_n \left( i , \left[ \frac{n-1}{2} \right] \right) = 1 - \frac{2}{n+1}\ \forall\ i \in \left[ \frac{n-1}{2} \right] ,
\end{equation}
we can use an inductive argument as above to obtain
\begin{equation}
P_n^t \left( 1 , \left[ \frac{n-1}{2} \right] \right) \geq \left( 1 - \frac{2}{n+1} \right)^t\ \forall\ t \in \Z_+ .
\end{equation}
On the other hand, by \eqref{eqCgbTrans} we have
\begin{equation}
\pi_n \left( \left[ \frac{n-1}{2} \right] \right) = \frac{n-1}{2}  \frac{n+1}{(n-1)(n+3)} \leq \frac{1}{2} .
\end{equation}
Hence, combining the previous two lines, and using Bernoulli's inequality, we obtain
\begin{equation}
d_n(t) \geq \| e_1 P_n^t - \pi_n \| \geq \frac{1}{2} - \frac{2t}{n+1} .
\end{equation}
The following implications are then immediate:
\begin{equation} \label{eqExLowerFinalOdd}
t < \frac{n+1}{2} \left( \frac{1}{2} - \epsilon \right) \quad \Rightarrow \quad d_n(t) > \epsilon \quad \Rightarrow \quad t_{\mix}^{(n)}(\epsilon) > \frac{n+1}{2} \left( \frac{1}{2} - \epsilon \right) .
\end{equation}
Combining \eqref{eqExLowerFinalEven} and \eqref{eqExLowerFinalOdd}, we conclude $t_{\mix}^{(n)}(\epsilon) = \Omega(n)$.

\subsection{Proof of Proposition \ref{propExamplesPerturb}} \label{appProofExamplesPerturb}

\subsubsection{Winning streak reversal}

Let $\{ \alpha_n \}_{n \in \N}$, $\{ \sigma_n \}_{n \in \N}$, $c_1$, $c_2$, and $c_3$ be as in the statement of the proposition. For $n \in \N$, set $m_n = \floor{ n^{ c_1 (1 + c_2) / 2} }$. Then by $\alpha_n = \Theta ( n^{-c_1} )$, $c_1 > 0$, and $c_2 > 1$,
\begin{equation} \label{eqWsrAlphanMnToInf}
\alpha_n m_n = \Theta \left( n^{ - c_1 } n^{ c_1 (1 + c_2) / 2 } \right) = \Theta \left( n^{ c_1 ( c_2 - 1 ) / 2 } \right) \quad \Rightarrow \quad \lim_{n \rightarrow \infty} \alpha_n m_n = \infty .
\end{equation}
Again using $\alpha_n = \Theta ( n^{-c_1} )$, $c_1 > 0$, and $c_2 > 1$, we also observe
\begin{equation}
\floor*{ c_3 \alpha_n^{-c_2} } - m_n = \Theta \left( n^{c_1 c_2} - n^{ c_1 (1 + c_2) / 2} \right) \quad \Rightarrow \quad \lim_{n \rightarrow \infty} \left( \floor*{ c_3 \alpha_n^{-c_2} } - m_n \right) = \infty .
\end{equation}
Consequently, we can find a sequence of positive integers $\{ m_n' \}_{n \in \N}$ such that
\begin{equation} \label{eqWsrPerturbMnPrime}
\floor*{ c_3 \alpha_n^{-c_2} } - m_n  + 2 > m_n'\ \forall\ n \in \N \textrm{ sufficiently large} , \quad \lim_{n \rightarrow \infty} m_n' = \infty .
\end{equation}
Now letting $e_{ [m_n'] } = \sum_{i \in [m_n']} e_i = \sum_{i=1}^{m_n'} e_i$, we can use Lemma \ref{lemPropRestart} to obtain
\begin{align} 
& \pi_{\alpha_n,\sigma_n} ( [m_n'] ) = \alpha_n \sum_{t=0}^{\infty} (1-\alpha_n)^t \sigma_n P_n^t e_{ [m_n'] }^{\trans} = \alpha_n \sum_{t=0}^{\infty} (1-\alpha_n)^t \sum_{i=1}^n \sigma_n(i) e_i P_n^t e_{ [m_n'] }^{\trans} \\
& \quad\quad= \alpha_n \sum_{t=0}^{m_n-1} (1-\alpha_n)^t \sum_{i=1}^{ \floor{ c_3 \alpha_n^{-c_2} } } \sigma_n(i) e_i P_n^t e_{ [m_n'] }^{\trans} + \alpha_n \sum_{t=m_n}^{\infty} (1-\alpha_n)^t \sum_{i=1}^n \sigma_n(i) e_i P_n^t e_{ [m_n'] }^{\trans} \label{eqWsrPerturbTwoTerms} \\
& \quad\quad\quad\quad + \alpha_n \sum_{t=0}^{m_n-1} (1-\alpha_n)^t \sum_{i=\floor{ c_3 \alpha_n^{-c_2} }+1}^{ n } \sigma_n(i) e_i P_n^t e_{ [m_n'] }^{\trans} . \label{eqWsrPerturbLastTerm} 
\end{align}
To bound the summands in \eqref{eqWsrPerturbTwoTerms}, we use $e_i P_n^t e_{ [m_n'] }^{\trans} \leq 1\ \forall\ i , t$ by row stochasticity to obtain
\begin{align}
\alpha_n \sum_{t=0}^{m_n-1} (1-\alpha_n)^t \sum_{i=1}^{ \floor{ c_3 \alpha_n^{-c_2} } } \sigma_n(i) e_i P_n^t e_{ [m_n'] }^{\trans} & \leq \alpha_n \sum_{t=0}^{m_n-1} (1-\alpha_n)^t \sum_{i=1}^{ \floor{ c_3 \alpha_n^{-c_2} } } \sigma_n(i) \\
& \leq \sum_{i=1}^{ \floor{ c_3 \alpha_n^{-c_2} } } \sigma_n(i) , \\
\alpha_n \sum_{t=m_n}^{\infty} (1-\alpha_n)^t \sum_{i=1}^n \sigma_n(i) e_i P_n^t e_{ [m_n'] }^{\trans} & \leq \alpha_n \sum_{t=m_n}^{\infty} (1-\alpha_n)^t = (1-\alpha_n)^{m_n} \leq e^{-\alpha_n m_n }.
\end{align}
We next consider \eqref{eqWsrPerturbLastTerm}. First note that, whenever $i-t > m_n' > 0$, we have by \eqref{eqWsrTransAndStat}, $e_i P_n^t e_{ [m_n'] } = e_{i-t} e_{ [m_n'] } = 0$. Also, every $i,t$ pair in the summation in \eqref{eqWsrPerturbLastTerm} satisfies, for $n$ sufficiently large by \eqref{eqWsrPerturbMnPrime},
\begin{equation}
i-t \geq  \floor{ c_3 \alpha_n^{-c_2} } + 1  - ( m_n - 1 ) =  \floor{ c_3 \alpha_n^{-c_2} } - m_n +  2 > m_n' ,
\end{equation}
which implies \eqref{eqWsrPerturbLastTerm} is zero for all $n$ large. We have therefore shown
\begin{equation}
\limsup_{n \rightarrow \infty} \pi_{\alpha_n,\sigma_n} ( [m_n'] ) \leq \limsup_{n \rightarrow \infty}  \left( \sum_{i=1}^{ \floor{ c_3 \alpha_n^{-c_2} } } \sigma_n(i)  + \exp ( -\alpha_n m_n ) \right)  = 0 ,
\end{equation}
where the equality holds by assumption and \eqref{eqWsrAlphanMnToInf}. Since also $\pi_{\alpha_n,\sigma_n} ( [m_n'] ) \geq 0\ \forall\ n \in \N$, we conclude $\lim_{n \rightarrow \infty} \pi_{\alpha_n,\sigma_n} ( [m_n'] ) = 0$. On the other hand,
\begin{equation}
\pi_n ( [m_n'] ) = \sum_{i=1}^{m_n'} \pi_n(i) = \sum_{i=1}^{m_n'} 2^{-i} = 1 - 2^{-m_n'} \xrightarrow[n \rightarrow \infty]{} 1,
\end{equation}
where the limit holds since $m_n' \rightarrow \infty$ by \eqref{eqWsrPerturbMnPrime}. Combining arguments, we obtain
\begin{align}
1 \geq \limsup_{n \rightarrow \infty} \| \pi_n - \pi_{\alpha_n,\sigma_n} \| & \geq \liminf_{n \rightarrow \infty} \| \pi_n - \pi_{\alpha_n,\sigma_n} \| \\
& \geq \liminf_{n \rightarrow \infty} \left( \pi_n ( [m_n'] ) - \pi_{\alpha_n,\sigma_n} ( [m_n'] ) \right) = 1 .
\end{align}

\subsubsection{Complete graph bijection}

Let $\{ \alpha_n \}_{n \in \N}, \{ \tilde{P}_n \}_{n \in \N}$ be given. Then
\begin{align}
\| \pi_n - \tilde{\pi}_n \| %& \leq \| \pi_n - \tilde{\pi}_n P_n \| + \| \tilde{\pi}_n P_n - \tilde{\pi}_n \tilde{P}_n \| \\
& \leq \max_{i \in [n]} \| \pi_n - e_i P_n \|  + \max_{i \in [n]} \| e_i P_n - e_i \tilde{P}_n \| \leq d_n(1) + \alpha_n\ \forall\ n \in \N ,
\end{align}
where we have used Lemma \ref{lemBasicTV}, global balance, and the fact that $\tilde{P}_n \in  B(P_n,\alpha_n)$. Thus, using \eqref{eqMixToHalfInOneStep} from Appendix \ref{appProofExamplesMixing} and the assumption $\limsup \alpha_n = \bar{\alpha}$, we obtain
\begin{equation}
\limsup_{n \rightarrow \infty} \| \pi_n - \tilde{\pi}_n \| \leq \limsup_{n \rightarrow \infty} d_n(1) + \limsup_{n \rightarrow \infty} \alpha_n = \frac{1}{2} + \bar{\alpha} ,
\end{equation}
which completes the proof.

\end{appendix}

\section*{Acknowledgements}

This work was supported by NSF award CCF 2008130. Most of the work was completed while the first author was at the University of Michigan.

\bibliographystyle{imsart-number} 
\bibliography{references}  

\begin{thebibliography}{27}
% BibTex style file: imsart-number.bst, 2017-11-03
% Default style options (sort=1,type=number).
% Used options (sort=1,type=number).

\bibitem{aldous1983random}
\begin{bincollection}[author]
\bauthor{\bsnm{Aldous},~\bfnm{David}\binits{D.}}
(\byear{1983}).
\btitle{Random walks on finite groups and rapidly mixing {M}arkov chains}.
In \bbooktitle{S{\'e}minaire de Probabilit{\'e}s XVII 1981/82}
\bpages{243--297}.
\bpublisher{Springer}.
\end{bincollection}
\endbibitem

\bibitem{aldous2002reversible}
\begin{bmisc}[author]
\bauthor{\bsnm{Aldous},~\bfnm{David}\binits{D.}} \AND
  \bauthor{\bsnm{Fill},~\bfnm{Jim}\binits{J.}}
(\byear{2002}).
\btitle{Reversible {M}arkov chains and random walks on graphs}.
\end{bmisc}
\endbibitem

\bibitem{athreya2000perfect}
\begin{btechreport}[author]
\bauthor{\bsnm{Athreya},~\bfnm{Krishna~B}\binits{K.~B.}} \AND
  \bauthor{\bsnm{Stenflo},~\bfnm{Orjan}\binits{O.}}
(\byear{2000}).
\btitle{Perfect sampling for {D}oeblin chains}
\btype{Technical Report},
\bpublisher{Cornell University Operations Research and Industrial Engineering}.
\end{btechreport}
\endbibitem

\bibitem{avena2018random}
\begin{barticle}[author]
\bauthor{\bsnm{Avena},~\bfnm{Luca}\binits{L.}},
  \bauthor{\bsnm{G{\"u}lda{\c{s}}},~\bfnm{Hakan}\binits{H.}},
  \bauthor{\bparticle{van~der} \bsnm{Hofstad},~\bfnm{Remco}\binits{R.}} \AND
  \bauthor{\bparticle{den} \bsnm{Hollander},~\bfnm{Frank}\binits{F.}}
(\byear{2018}).
\btitle{Random walks on dynamic configuration models: a trichotomy}.
\bjournal{Stochastic Processes and their Applications}.
\end{barticle}
\endbibitem

\bibitem{avena2020linking}
\begin{barticle}[author]
\bauthor{\bsnm{Avena},~\bfnm{Luca}\binits{L.}},
  \bauthor{\bsnm{G{\"u}lda{\c{s}}},~\bfnm{Hakan}\binits{H.}},
  \bauthor{\bparticle{van~der} \bsnm{Hofstad},~\bfnm{Remco}\binits{R.}},
  \bauthor{\bsnm{Hollander},~\bfnm{Frank~den}\binits{F.~d.}} \AND
  \bauthor{\bsnm{Nagy},~\bfnm{Oliver}\binits{O.}}
(\byear{2020}).
\btitle{Linking the mixing times of random walks on static and dynamic random
  graphs}.
\bjournal{arXiv preprint arXiv:2012.11012}.
\end{barticle}
\endbibitem

\bibitem{avrachenkov2015pagerank}
\begin{binproceedings}[author]
\bauthor{\bsnm{Avrachenkov},~\bfnm{Konstantin}\binits{K.}},
  \bauthor{\bsnm{Kadavankandy},~\bfnm{Arun}\binits{A.}},
  \bauthor{\bsnm{Prokhorenkova},~\bfnm{Liudmila~Ostroumova}\binits{L.~O.}} \AND
  \bauthor{\bsnm{Raigorodskii},~\bfnm{Andrei}\binits{A.}}
(\byear{2015}).
\btitle{Page{R}ank in undirected random graphs}.
In \bbooktitle{International Workshop on Algorithms and Models for the
  Web-Graph}
\bpages{151--163}.
\bpublisher{Springer}.
\end{binproceedings}
\endbibitem

\bibitem{basu2015characterization}
\begin{binproceedings}[author]
\bauthor{\bsnm{Basu},~\bfnm{Riddhipratim}\binits{R.}},
  \bauthor{\bsnm{Hermon},~\bfnm{Jonathan}\binits{J.}} \AND
  \bauthor{\bsnm{Peres},~\bfnm{Yuval}\binits{Y.}}
(\byear{2015}).
\btitle{Characterization of cutoff for reversible {M}arkov chains}.
In \bbooktitle{Proceedings of the twenty-sixth annual ACM-SIAM symposium on
  Discrete algorithms}
\bpages{1774--1791}.
\bpublisher{Society for Industrial and Applied Mathematics}.
\end{binproceedings}
\endbibitem

\bibitem{bayer1992trailing}
\begin{barticle}[author]
\bauthor{\bsnm{Bayer},~\bfnm{Dave}\binits{D.}} \AND
  \bauthor{\bsnm{Diaconis},~\bfnm{Persi}\binits{P.}}
(\byear{1992}).
\btitle{Trailing the dovetail shuffle to its lair}.
\bjournal{The Annals of Applied Probability}
\bvolume{2}
\bpages{294--313}.
\end{barticle}
\endbibitem

\bibitem{ben2020threshold}
\begin{barticle}[author]
\bauthor{\bsnm{Ben-Hamou},~\bfnm{Anna}\binits{A.}}
(\byear{2020}).
\btitle{A threshold for cutoff in two-community random graphs}.
\bjournal{Annals of Applied Probability}
\bvolume{30}
\bpages{1824--1846}.
\end{barticle}
\endbibitem

\bibitem{ben2017cutoff}
\begin{barticle}[author]
\bauthor{\bsnm{Ben-Hamou},~\bfnm{Anna}\binits{A.}} \AND
  \bauthor{\bsnm{Salez},~\bfnm{Justin}\binits{J.}}
(\byear{2017}).
\btitle{Cutoff for nonbacktracking random walks on sparse random graphs}.
\bjournal{The Annals of Probability}
\bvolume{45}
\bpages{1752--1770}.
\end{barticle}
\endbibitem

\bibitem{bordenave2016cutoff}
\begin{barticle}[author]
\bauthor{\bsnm{Bordenave},~\bfnm{Charles}\binits{C.}},
  \bauthor{\bsnm{Caputo},~\bfnm{Pietro}\binits{P.}} \AND
  \bauthor{\bsnm{Salez},~\bfnm{Justin}\binits{J.}}
(\byear{2016}).
\btitle{Cutoff at the ``entropic time" for sparse {M}arkov chains}.
\bjournal{Probability Theory and Related Fields}
\bpages{1--32}.
\end{barticle}
\endbibitem

\bibitem{bordenave2018random}
\begin{barticle}[author]
\bauthor{\bsnm{Bordenave},~\bfnm{Charles}\binits{C.}},
  \bauthor{\bsnm{Caputo},~\bfnm{Pietro}\binits{P.}} \AND
  \bauthor{\bsnm{Salez},~\bfnm{Justin}\binits{J.}}
(\byear{2018}).
\btitle{Random walk on sparse random digraphs}.
\bjournal{Probability Theory and Related Fields}
\bvolume{170}
\bpages{933--960}.
\end{barticle}
\endbibitem

\bibitem{caputo2019mixing}
\begin{barticle}[author]
\bauthor{\bsnm{Caputo},~\bfnm{Pietro}\binits{P.}} \AND
  \bauthor{\bsnm{Quattropani},~\bfnm{Matteo}\binits{M.}}
(\byear{2019}).
\btitle{Mixing time of {PageRank} surfers on sparse random digraphs}.
\bjournal{arXiv preprint arXiv:1905.04993}.
\end{barticle}
\endbibitem

\bibitem{diaconis1996cutoff}
\begin{barticle}[author]
\bauthor{\bsnm{Diaconis},~\bfnm{Persi}\binits{P.}}
(\byear{1996}).
\btitle{The cutoff phenomenon in finite {M}arkov chains}.
\bjournal{Proceedings of the National Academy of Sciences}
\bvolume{93}
\bpages{1659--1664}.
\end{barticle}
\endbibitem

\bibitem{diaconis1981generating}
\begin{barticle}[author]
\bauthor{\bsnm{Diaconis},~\bfnm{Persi}\binits{P.}} \AND
  \bauthor{\bsnm{Shahshahani},~\bfnm{Mehrdad}\binits{M.}}
(\byear{1981}).
\btitle{Generating a random permutation with random transpositions}.
\bjournal{Probability Theory and Related Fields}
\bvolume{57}
\bpages{159--179}.
\end{barticle}
\endbibitem

\bibitem{diaconis1987time}
\begin{barticle}[author]
\bauthor{\bsnm{Diaconis},~\bfnm{Persi}\binits{P.}} \AND
  \bauthor{\bsnm{Shahshahani},~\bfnm{Mehrdad}\binits{M.}}
(\byear{1987}).
\btitle{Time to reach stationarity in the {B}ernoulli--{L}aplace diffusion
  model}.
\bjournal{SIAM Journal on Mathematical Analysis}
\bvolume{18}
\bpages{208--218}.
\end{barticle}
\endbibitem

\bibitem{hermon2021cutoff}
\begin{barticle}[author]
\bauthor{\bsnm{Hermon},~\bfnm{Jonathan}\binits{J.}} \AND
  \bauthor{\bsnm{Olesker-Taylor},~\bfnm{Sam}\binits{S.}}
(\byear{2021}).
\btitle{Cutoff for Almost All Random Walks on Abelian Groups}.
\bjournal{arXiv preprint arXiv:2102.02809}.
\end{barticle}
\endbibitem

\bibitem{levin2009markov}
\begin{bbook}[author]
\bauthor{\bsnm{Levin},~\bfnm{David~A}\binits{D.~A.}},
  \bauthor{\bsnm{Peres},~\bfnm{Yuval}\binits{Y.}} \AND
  \bauthor{\bsnm{Wilmer},~\bfnm{Elizabeth~L}\binits{E.~L.}}
(\byear{2009}).
\btitle{Markov chains and mixing times}.
\bpublisher{American Mathematical Society}.
\end{bbook}
\endbibitem

\bibitem{lubetzky2016cutoff}
\begin{barticle}[author]
\bauthor{\bsnm{Lubetzky},~\bfnm{Eyal}\binits{E.}} \AND
  \bauthor{\bsnm{Peres},~\bfnm{Yuval}\binits{Y.}}
(\byear{2016}).
\btitle{Cutoff on all {R}amanujan graphs}.
\bjournal{Geometric and Functional Analysis}
\bvolume{26}
\bpages{1190--1216}.
\end{barticle}
\endbibitem

\bibitem{lubetzky2010cutoff}
\begin{barticle}[author]
\bauthor{\bsnm{Lubetzky},~\bfnm{Eyal}\binits{E.}} \AND
  \bauthor{\bsnm{Sly},~\bfnm{Allan}\binits{A.}}
(\byear{2010}).
\btitle{Cutoff phenomena for random walks on random regular graphs}.
\bjournal{Duke Mathematical Journal}
\bvolume{153}
\bpages{475--510}.
\end{barticle}
\endbibitem

\bibitem{mitrophanov2003stability}
\begin{barticle}[author]
\bauthor{\bsnm{Mitrophanov},~\bfnm{A~Yu}\binits{A.~Y.}}
(\byear{2003}).
\btitle{Stability and exponential convergence of continuous-time {M}arkov
  chains}.
\bjournal{Journal of applied probability}
\bvolume{40}
\bpages{970--979}.
\end{barticle}
\endbibitem

\bibitem{mitrophanov2005sensitivity}
\begin{barticle}[author]
\bauthor{\bsnm{Mitrophanov},~\bfnm{A~Yu}\binits{A.~Y.}}
(\byear{2005}).
\btitle{Sensitivity and convergence of uniformly ergodic {M}arkov chains}.
\bjournal{Journal of Applied Probability}
\bvolume{42}
\bpages{1003--1014}.
\end{barticle}
\endbibitem

\bibitem{page1999pagerank}
\begin{btechreport}[author]
\bauthor{\bsnm{Page},~\bfnm{Lawrence}\binits{L.}},
  \bauthor{\bsnm{Brin},~\bfnm{Sergey}\binits{S.}},
  \bauthor{\bsnm{Motwani},~\bfnm{Rajeev}\binits{R.}} \AND
  \bauthor{\bsnm{Winograd},~\bfnm{Terry}\binits{T.}}
(\byear{1999}).
\btitle{The {PageRank} citation ranking: Bringing order to the web.}
\btype{Technical Report},
\bpublisher{Stanford InfoLab}.
\end{btechreport}
\endbibitem

\bibitem{propp1996exact}
\begin{barticle}[author]
\bauthor{\bsnm{Propp},~\bfnm{James~Gary}\binits{J.~G.}} \AND
  \bauthor{\bsnm{Wilson},~\bfnm{David~Bruce}\binits{D.~B.}}
(\byear{1996}).
\btitle{Exact sampling with coupled {M}arkov chains and applications to
  statistical mechanics}.
\bjournal{Random Structures \& Algorithms}
\bvolume{9}
\bpages{223--252}.
\end{barticle}
\endbibitem

\bibitem{salez2021cutoff}
\begin{barticle}[author]
\bauthor{\bsnm{Salez},~\bfnm{Justin}\binits{J.}}
(\byear{2021}).
\btitle{Cutoff for non-negatively curved {M}arkov chains}.
\bjournal{arXiv preprint arXiv:2102.05597}.
\end{barticle}
\endbibitem

\bibitem{sousi2020cutoff}
\begin{binproceedings}[author]
\bauthor{\bsnm{Sousi},~\bfnm{Perla}\binits{P.}} \AND
  \bauthor{\bsnm{Thomas},~\bfnm{Sam}\binits{S.}}
(\byear{2020}).
\btitle{Cutoff for random walk on dynamical {E}rd{\H{o}}s--{R}{\'e}nyi graph}.
In \bbooktitle{Annales de l'Institut Henri Poincar{\'e}, Probabilit{\'e}s et
  Statistiques}
\bvolume{56}
\bpages{2745--2773}.
\bpublisher{Institut Henri Poincar{\'e}}.
\end{binproceedings}
\endbibitem

\bibitem{vial2019structural}
\begin{barticle}[author]
\bauthor{\bsnm{Vial},~\bfnm{Daniel}\binits{D.}} \AND
  \bauthor{\bsnm{Subramanian},~\bfnm{Vijay}\binits{V.}}
(\byear{2019}).
\btitle{A structural result for {Personalized PageRank} and its algorithmic
  consequences}.
\bjournal{Proceedings of the ACM on Measurement and Analysis of Computing
  Systems}
\bvolume{3}.
\end{barticle}
\endbibitem

\end{thebibliography}

\end{document}